\theoremstyle{plain}
\newtheorem{theorem}{Theorem}[section]
\newtheorem{cor}{Corollary}[section]
\theoremstyle{definition}
\newtheorem{definition}{Definition}[section] 
\newtheorem{lemma}{Lemma}[section]
\newtheorem{remark}{Remark}[section]
\def\dist{{\operatorname{dist}}}
\def\div{{\operatorname{div}}}
\def\supp{{\operatorname{supp}}}
\renewcommand{\d}{\:\! \mathrm{d}}
\long\def\symbolfootnote[#1]#2{\begingroup
\def\thefootnote{\fnsymbol{footnote}}\footnote[#1]{#2}\endgroup}
\numberwithin{equation}{section}
\begin{document}
\title[On sign-changing solutions for mixed local-nonlocal $p$-Laplace operator] {On sign-changing solutions for mixed local and nonlocal $p$-Laplace operator}

\author[S. Bhowmick, S. Ghosh]{Souvik Bhowmick$^{1}$\orcidlink{0009-0009-9661-7219} and Sekhar Ghosh$^{1}$\orcidlink{0000-0002-5082-2374}
}

\subjclass[2020]{35M12, 35R11, 47J30, 35J60, 35J92}
\keywords{Mixed local and nonlocal operator, Invariant sets of descending flow, Sign-changing solution, Least energy solution, Nehari manifold, Brouwer degree theory}
\maketitle

\begin{center}
		$^1$Department of Mathematics, National Institute of Technology Calicut,\\
	Kozhikode - 673601, Kerala, India.\\
    \textit{Email addresses:} souvikbhowmick2912@gmail.com, sekharghosh1234@gmail.com
\end{center}
\vspace{0.3cm}
\begin{abstract}
In this paper, we use the method of invariant sets of descending flows to demonstrate the existence of multiple sign-changing solutions for a class of elliptic problems with zero Dirichlet boundary conditions. By combining Nehari manifold techniques with a constrained variational approach and Brouwer degree theory, we establish the existence of a least-energy sign-changing solution. Furthermore, we prove that the energy of the least energy sign-changing solution is strictly greater than twice the ground state energy. This work extends the celebrated results of Bartsch $et~al.$ [Proc. Lond. Math. Soc. (3), 91(1): 129–152, 2005] and Chang $et~al.$ [Adv. Nonlinear Stud., 19(1): 29-53, 2019] to the mixed local and nonlocal $p$-Laplace operator, providing a novel contribution even in the case when $p=2$.
\end{abstract}

\section{Introduction and main theorems}\label{sec1}

\noindent In this paper, we study the following mixed local and nonlocal elliptic equation;
\begin{equation}\label{MP}
\begin{aligned}
    -\Delta_p u +(-\Delta)^s_p  u&= f(x,u)   \text{ in } \,\,\Omega,\\
      u &=0 \,\text{in} \,~ \mathbb R^N \setminus \Omega,
   \end{aligned}
    \end{equation}
    where $\Omega\subset\mathbb{R}^N$ is a bounded domain with smooth boundary $\partial \Omega$, $s\in(0,1)$, $p\in(1,N)$, $-\Delta_p u=\div (|\nabla u|^{p-2}\nabla u)$ represents the $p$-Laplace operator and $(-\Delta)^s_p$ denotes the fractional $p$-Laplace operator defined as the Cauchy principle value, which is given by
       $$(-\Delta)^s_p u(x) = C({N,s,p}) P.V \int_{\mathbb{R}^N} \frac{(|u(x)-u(y)|^{p-2})(u(x)-u(y))}{|x-y|^{N+ps}} \, \d y,$$
where $C({N,s,p})$ is a normalizing constant. For simplicity, we use $C({N,s,p})=1$. We impose the following assumptions on the function $f:\bar{\Omega}\times \mathbb{R}\rightarrow \mathbb{R}$;
\begin{itemize}
    \item[($f_1$)] $f\in C(\bar{\Omega}\cross \mathbb{R})$ and $\lim_{|u|\rightarrow 0}\frac{f(x,u)}{|u|^{p-2}u}=0,$ uniformly in $x\in\bar{\Omega}.$
    
    \item[($f_2$)] There exist $C>0$ and $q\in(p,p^*)$ with $p^*=\frac{pN}{N-p}$ such that 
   $$|f(x,u)|\leq C(1+|u|^{q-1}),~\text{for every} ~(x,u)\in\bar{\Omega}\times \mathbb R.$$
   
   \item[($f_3$)]  There exist $\mu>p$ and $M>0$ such that for all $x\in\bar{\Omega}$ and $|u|\geq M$,
   $$f(x,u)u\geq \mu F(x,u)>0,~  \text{where}~ F(x,u):=\int_0^u f(x,\tau)\d \tau.$$
   
   \item[($f_4$)] $\lim\limits_{|u|\rightarrow +\infty}\frac{f(x,u)}{|u|^{p-2}u}=+\infty,$ uniformly in $x\in\bar{\Omega}.$
   
   \item[($f_5$)] The function $\frac{f(x,u)}{|u|^{p-2}u}$ is strictly increasing in $(0,+\infty)$ and strictly decreasing in $(-\infty,0)$.
   
   \item[($f_6$)] For all $x\in \Omega$ and $u\in \mathbb R$, we have $f(x,-u)=-f(x,u)$.
\end{itemize}

\par In the past decades, researchers have extensively examined the existence and multiplicity of solutions to elliptic partial differential equations (PDEs) associated with Laplace, fractional Laplace operators and their nonlinear counterpart. In the pioneering work by Aubin \cite{A1976}, Talenti \cite{T1976} and Gidas, Ni, and Nirenberg \cite{GNN1979} the set of positive solutions was completely classified for the semilinear PDEs involving the Laplacian. In particular, they proved that minimizers of the Sobolev inequality are attained by a family of ``fixed-sign'' solutions (positive or negative). On the other hand, in the study of the Dirichlet eigenvalue problem $-\Delta u=\lambda u$, we know that all higher eigenfunctions change sign except the principal eigenfunction. Dancer and Du investigated sign-changing solutions in a series of studies \cite{DD1995, DD1994, DD1996}, which play a crucial role in the study of the ``Lotka–Volterra" competing species system involving two species. It is noteworthy to mention that sign-changing solutions arise as ``limit problems" to a class boundary value problem with lack of compactness \cite{W2006} and ``ecological problems" \cite{DD1995}. Moreover, sign-changing solutions appear in various fields of applied sciences, viz.  Optimizations, anomalous diffusion, minimal surface, phase transition, flame propagation, finance \cite{DCC1999,FER1978}. Therefore, sign-changing solutions need attention to classify the set of all solutions to an operator, as well as to study the limiting case of certain problems. Off late, researchers have grown significant interests in studying the existence, multiplicity, and regularity of sign-changing solutions to elliptic PDEs involving the Laplacian, $p$-Laplacian, fractional Laplacian, fractional $p$-Laplacian, etc. We refer to \cite{W1991,CCN1997,BL2004,BLW2005,CNW2019,CW2014,BCW2000,LW2004, Z2008} and the reference therein.

\par One of the earliest study of sign-changing solutions (nodal solutions) is due to Wang \cite{W1991}, where the author considered the following problem,
\begin{equation}\label{eq1.1}
\begin{aligned}
    -\Delta u &= f(x,u)   \text{ in } \,\,\Omega,\\
      u &=0 \,\text{in} \,~ \partial \Omega,
   \end{aligned}
    \end{equation}
where $\Omega\subset\mathbb{R}^N$ is a bounded domain with smooth boundary $\partial \Omega$, $N>2$, $f:\bar{\Omega}\cross \mathbb{R}\rightarrow \mathbb{R}$ is continuous. In \cite{W1991}, the author proved that problem \eqref{eq1.1} has one positive, one negative, and one non-trivial solution by employing the linking method and Morse theory without assuming any symmetry. In their celebrated paper Castro $et ~al.$ \cite{CCN1997} extended the results of \cite{W1991}, establishing the existence of one positive solution, one negative solution, and one sign-changing solution under certain conditions on $f(x,u)=f(u)$, by using the direct method along with the variational splitting $$J(u)=J(u^+)+J(u^-)~\text{and}~\gamma(u)=\gamma(u^+)+\gamma(u^-),$$ where $$J(u)=\frac{1}{2}\int_\Omega |\nabla u|^2\d x -\int_\Omega F(u)\d x \text{ and }\gamma(u)=\langle J'(u),u\rangle.$$ 
Bartsch and Wang \cite{BW1996} developed an abstract critical point theory for a functional on partially ordered Hilbert spaces to establish the existence of a sign-changing solution to the problem \eqref{eq1.1} with some weak condition on $f(u)=f(x,u)$. Later Bartsch $et~ al.$\cite{BCW2000} proved the existence of a sign-changing solution of the problem \eqref{eq1.1} by using the Morse index. In \cite{BW2003}, Bartsch and Weth established the existence of a sign-changing solution with the properties of the nodal domains and obtained the location of subsolutions and supersolutions to the problem \eqref{eq1.1}. Liu and Sun \cite{LS2001} introduced the method of invariant sets of descending flow to guarantee the existence of multiple sign-changing solutions to the problem \eqref{eq1.1}. Liu and Wang \cite{LW2004} proved the existence and multiplicity of sign-changing solutions to the problem \eqref{eq1.1} by using the Nehari manifold techniques under some weak conditions on $f$. In \cite{RW2009}, Roselli and Willem established the existence of least energy sign-changing solutions for the Brezis-Nirenberg problem using the Nehari manifold. The following nonlinear extension to the problem \eqref{eq1.1} involving the $p$-Laplacian was studied by Bartsch and Liu \cite{BL2004} to guarantee the existence of sign-changing solutions.
\begin{equation}\label{eq1.2}
\begin{aligned}
    -\Delta_p u &= f(x,u)   \text{ in } \,\,\Omega,\\
      u &=0 \,\text{in} \,~ \partial \Omega,
   \end{aligned}
    \end{equation}
where $\Omega\subset\mathbb{R}^N$ is a bounded domain with smooth boundary $\partial \Omega$, $1<p<N$, $p\in(1,\infty)$, $f:\bar{\Omega}\cross \mathbb{R}\rightarrow \mathbb{R}$ is continuous.
In \cite{BL2004}, the authors established the existence of four solutions by applying a critical point theorem for $C^1$-functionals on partially ordered Banach spaces and have used the method of descending flow whenever \eqref{eq1.2} possesses a subsolution and a supersolution. Furthermore, they established the existence of one positive solution, one negative solution, and one sign-changing solution to the problem \eqref{eq1.2}. Moreover, in \cite{BLW2005}, Bartsch $et~al.$ constructed a new variational approach to establish the existence of a sign-changing solution to \eqref{eq1.2} by developing a critical point theory in Banach spaces. For further detailed discussion on the development of sign-changing solutions, we refer to \cite{BCW2000,BW2003,RW2009,BWW2005,B2001,DD1995,DD1994,DD1996} and the references cited therein. 

\par We now focus on the nonlocal counterpart of the problem \eqref{eq1.1}. Consider the problem,
\begin{equation}\label{eq1.3}
\begin{aligned}
    (-\Delta)^s  u&= f(x,u)   \text{ in } \,\,\Omega,\\
      u &=0 \,\text{in} \,~ \mathbb R^N \setminus \Omega,
   \end{aligned}
    \end{equation}
    where $\Omega\subset\mathbb{R}^N$ is a bounded domain with smooth boundary $\partial \Omega$, $0<s<1$, $N>2s$, $f:\bar{\Omega}\cross \mathbb{R}\rightarrow \mathbb{R}$ is continuous. The existence of a positive solution, a negative solution, and a sign-changing solution to the problem \eqref{eq1.3} was guaranteed by Chang and Wang \cite{CW2014}. They employed the method of invariant sets of descending flow combined with the Caffarelli and Silvestre \cite{CS2007} extension and an equivalent definition of nonlocal to local operator introduced by Br\"{a}ndle {\it{et al.}} \cite{BCDS2013}. Moreover, they proved that the sign-changing solution has exactly two nodal domains. Gu {\it{et al.}}\cite{GYZ2017} investigated the problem with an integro-differential operator using the constrained variational method and the quantitative deformation lemma. Later, using the method of invariant sets of the descending flow, Deng and Shuai \cite{DS2018} proved the existence of a positive solution, a negative solution, and a sign-changing solution to the problem \eqref{eq1.3} under some suitable conditions. In particular, they obtained that the least energy of the sign-changing solutions is strictly greater than the ground state energy when $f$ satisfies a monotonicity condition. Li $et~al.$ \cite{LST2017} proved infinitely many sign-changing solutions for the B\'rezis-Nirenberg problem when $f(x,u)=|u|^{2^*_s-2}u+\lambda u$ using the minimax method and invariant sets of the descending flow.
    
   Recently, Chang {\it{et al.}} \cite{CNW2019} consider the following problem,
   \begin{equation}\label{eq1.4}
\begin{aligned}
    (-\Delta)^s_p  u&= f(x,u)   \text{ in } \,\,\Omega,\\
      u &=0 \,\text{in} \,~ \mathbb R^N \setminus \Omega,
   \end{aligned}
    \end{equation}
where $\Omega\subset\mathbb{R}^N$ is a bounded domain with smooth boundary $\partial \Omega$, $0<s<1<p<\infty$, $N>sp$, $f:\bar{\Omega}\cross \mathbb{R}\rightarrow \mathbb{R}$ is continuous. In \cite{CNW2019}, the authors established the existence and multiplicity of a sign-changing solution to the problem \eqref{eq1.4} by applying the method of invariant sets of descending flow. Moreover, they employed the Nehari manifold method combined with a constrained variational technique and Brouwer degree theory to guarantee the existence of a least energy sign-changing solution whose energy is strictly greater than twice that of the ground state energy. Frassu and Iannizzotto \cite{FI2021} proved the existence of the smallest positive, the biggest negative, and a sign-changing solution to problem \eqref{eq1.4} using the Fučik spectrum and a truncation technique. In \cite{GSKC2019}, Ghosh $et~al.$ established the existence of the least energy of the sign-changing solutions for the singular problem employing a cut-off technique and Nehari manifold method. For further studies in this direction, we refer to \cite{NPV2012,RS2014,S2007,LST2017,DS2018,GSKC2019,FI2021} and the references therein. 

\par Recently, elliptic PDEs with mixed local and non-local operators have attracted significant interest from researchers considering its importance in theoretical developments as well as its real-world applications in population dynamics \cite{DV2021}, Brownian motion and L\'evy process \cite{DPV2023}. Following the work due to Dipierro \textit{et al}.  \cite{DV2021}, significant contributions are made in the context of existence and regularity of solutions. For instance, in \cite{BMV2024}, Biagi \textit{et al}. obtained the necessary and sufficient condition for the existence and uniqueness of a positive weak solution to the Brezis-Oswald type problem, which is given by
\begin{equation}\label{1.6}
\begin{aligned}
    -\Delta_p u +(-\Delta)^s_p  u&= g(x,u)   \text{ in } \,\,\Omega,\\
    u &=0 \,\text{in} \,~ \mathbb R^N \setminus \Omega,
   \end{aligned}
    \end{equation}
    where $\Omega\subset\mathbb{R}^N$ is a bounded domain with smooth boundary $\partial \Omega$, $1<p<\infty$, $0<s<1$, $N>p$, the nonlinearity $g:\bar{\Omega}\cross \mathbb{R}\rightarrow \mathbb{R}$ is satisfies certain growth conditions. Da Silva and Salort \cite{DS2020} proved the existence of at least one positive solution to the problem \eqref{1.6} involving concave-convex nonlinearities. Moreover, they investigated the asymptotic behavior of weak solutions as $p \rightarrow \infty$. In Da Silva $et ~al.$ \cite{DFV2024}, the authors established the existence and multiplicity of solutions to the problem \eqref{1.6} with both $p$-sublinear and $p$-superlinear growth using the Krasnoselskii's genus and the Lusternik-Schnirelman category theory. For the existence and regularity of positive solutions to \eqref{1.6} for $p=2$, we refer to Biagi $et~al.$ \cite{BDVV2022,BVDV2021}, Dipierro $et.al$ \cite{DV2021,DPV2023}, and Su $et.al$ \cite{SVWZ2025}. For further study in this direction, we refer to \cite{BG2024,LGG2024,BMV2024,BDVV2022,BVDV2021,BV2024,BV2024N,DM2022,GK2022,MMV2023,DFV2024} and references therein. 
    \par Recently, Su, Valdinoci, Wei and Zhang \cite{SVWZ2024} investigated the sign-changing solutions to the following problem,
\begin{equation}\label{eq1.5}
\begin{aligned}
    -\Delta u +(-\Delta)^s u&= \lambda |u|^{q-2}u+g(x,u)   \text{ in } \,\,\Omega,\\
      u &=0 \,\text{in} \,~ \mathbb R^N \setminus \Omega,
   \end{aligned}
    \end{equation}
    where $\Omega\subset\mathbb{R}^N$ is a bounded domain, $1<q<2$, $\lambda>0$, $g(x,u)$ satisfies some conditions. In \cite{SVWZ2024}, the authors proved that the problem \eqref{eq1.5} possesses a minimum of five nontrivial weak solutions. If $\Omega$ has $C^{1,1}$ boundary, then using descending flow in ordered spaces and the Hopf-type Lemma the authors proved that there exists $\lambda_0$ such that for $\lambda \in (0, \lambda_0)$, problem \eqref{eq1.5} had at least six nontrivial classical solutions, including two sign-changing solutions. In addition, employing the Nehari manifold method, they obtained that there exists a $\lambda^*>0$ such that for $\lambda\in(0,\lambda^*)$, problem \eqref{eq1.5} has at least six nontrivial classical solutions, including one sign-changing solution whenver $\Omega$ is of class $C^{1,1}$. To the best of our knowledge, the study due to Su $et ~al.$ \cite{SVWZ2024} is the only result available in the literature for sign-changing solutions. 
    \par Motivated by the above-mentioned studies, we consider the following problem involving the mixed local and nonlocal $p$-Laplacian operator
    \begin{equation}\label{eq1.6}
\begin{aligned}
    -\Delta_p u +(-\Delta)^s_p  u&= f(x,u)   \text{ in } \,\,\Omega,\\
      u &=0 \,\text{in} \,~ \mathbb R^N \setminus \Omega,
   \end{aligned}
    \end{equation}
where $\Omega\subset\mathbb{R}^N$ is a bounded domain with smooth boundary $\partial \Omega$, $1<p<\infty$, $0<s<1$, $N>p$, $f:\bar{\Omega}\cross \mathbb{R}\rightarrow \mathbb{R}$ is continuous.
   Before we proceed further, we present the following table of references that inspired the consideration of our problem. 
\begin{table}[H]
    \centering
\begin{tabular}{|c|c|c|c|c|c|c|}
 \hline
 Operators & $-\Delta$ &  $-\Delta_p$ &  $(-\Delta)^s$& $(-\Delta)^s_p$ & $-\Delta+(-\Delta)^s$& $-\Delta_p+(-\Delta)^s_p$ \\
  & & & & & & \\
  [0.90ex]
  \hline
 Sign-changing &\, \cite{BW1996,BCW2000,BW2003,CCN1997} &\,\cite{BL2004,BLW2005}  &\,\cite{CW2014,DS2018}  &\, \cite{CNW2019,FI2021} &\,\cite{SVWZ2024} &\,-- \\
solution &\cite{LW2004,LS2001,W1991} & & \cite{GYZ2017,LST2017}&\cite{GSKC2019} & & \\
 & & & & & & \\
 [0.90ex]
 \hline
\end{tabular}
\caption{}\label{table:1}
\end{table}

\par It is noteworthy to mention here that due the nonlinearity of the nonlocal operator, we cannot have the decomposition $\Phi(u)=\Phi(u^+)+\Phi(u^-)$ and $\langle \Phi'(u),u\rangle=\langle \Phi'(u^+),u^+\rangle+\langle \Phi'(u^-),u^-\rangle$ for $u=u^++u^-$, where $\Phi$ is the functional corresponding to \eqref{MP}. Moreover, when $p<2$, the energy is restricted to be of class $C^1$ and thus one needs to construct an appropriate pseudo-gradient vector field. Therefore, we first establish the necessary results and apply a critical point theorem combined with the method of invariant sets of descending flow developed by Liu $et~al.$ \cite{LLW2015} to guarantee the existence and multiplicity of sign-changing solutions to the problem \eqref{MP}. We now state our first main result.
\begin{theorem}\label{T2.3}
    Assume that conditions $(f_1), (f_2)$ and $(f_3)$ hold. Then the problem \eqref{MP} has a sign-changing solution. Furthermore, if $f$ satisfies the condition $(f_6)$, then the problem \eqref{MP} possesses infinitely many sign-changing solutions.
\end{theorem}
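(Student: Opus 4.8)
The plan is to obtain solutions of \eqref{MP} as critical points of the $C^1$ energy functional
$$
\Phi(u)=\frac1p\int_\Omega|\nabla u|^p\d x+\frac1p\iint_{\RealNumber^{2N}}\frac{|u(x)-u(y)|^p}{|x-y|^{N+ps}}\d x\d y-\int_\Omega F(x,u)\d x
$$
defined on $X=W_0^{1,p}(\Omega)$; the Gagliardo term is finite on $X$ because functions vanishing outside the bounded set $\Omega$ satisfy $W_0^{1,p}(\Omega)\hookrightarrow W^{s,p}(\RealNumber^N)$, and this embedding also lets us equip $X$ with the equivalent norm $\|u\|_X^p=\int_\Omega|\nabla u|^p\d x+[u]_{s,p}^p$. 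First I would record the standard variational facts: by $(f_1)$–$(f_2)$ one has $\Phi\in C^1(X,\RealNumber)$ with the expected derivative, $(f_1)$ yields the mountain–pass geometry near the origin (the nonlinear part is $o(\|u\|_X^p)$ there), and $(f_3)$ — the Ambrosetti–Rabinowitz condition — gives that $\Phi$ is unbounded below along rays and that every Palais–Smale sequence is bounded, hence convergent along a subsequence via the compact embedding $X\hookrightarrow\hookrightarrow L^q(\Omega)$ for $q<p^*$ and the $(S_+)$-property of $-\Delta_p+(-\Delta)^s_p$.

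Next I would set up the descending–flow machinery of \cite{LLW2015}. Introduce the operator $A\colon X\to X$ sending $u$ to the unique minimizer $v=A(u)$ of $w\mapsto\frac1p\|w\|_X^p-\int_\Omega f(x,u)w\d x$, equivalently the unique weak solution of $-\Delta_pv+(-\Delta)^s_pv=f(x,u)$ in $\Omega$ with $v=0$ in $\RealNumber^N\setminus\Omega$. Strict monotonicity of the mixed operator together with $(f_1)$–$(f_2)$ and the compact embeddings show that $A$ is well defined, continuous and compact, that the fixed points of $A$ are exactly the solutions of \eqref{MP}, and that $\langle\Phi'(u),u-A(u)\rangle\ge c\,\|u-A(u)\|_X^{\max\{2,p\}}$, so $u-A(u)$ is a descending direction for $\Phi$ away from solutions. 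The geometrically crucial step is the invariance of the order cones: writing $P^+=\{u\in X:u\ge0\}$, $P^-=-P^+$, and $D^{\pm}_\varepsilon=\{u\in X:\dist(u,P^{\pm})<\varepsilon\}$, one must prove $A(D^{\pm}_\varepsilon)\subset D^{\pm}_\varepsilon$ for all sufficiently small $\varepsilon>0$. This uses (i) $(f_1)$, to absorb the part of $f$ that changes sign near $0$ by a small multiple of $|u|^{p-2}u$; (ii) a weak comparison / sub–supersolution principle for $-\Delta_p+(-\Delta)^s_p$ with zero exterior datum; and (iii) an a priori $L^\infty$ bound for $v=A(u)$ (De Giorgi–type iteration adapted to the mixed operator, handling the nonlocal tail near $\partial\Omega$) which upgrades $L^q$-closeness to the cone into $X$-closeness to the cone. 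This is, together with the pseudo-gradient construction below, the main obstacle, precisely because the nonlocal term destroys the usual decomposition $\Phi(u)=\Phi(u^+)+\Phi(u^-)$ and forces one to argue through comparison and regularity rather than through splitting.

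With cone invariance in hand, for $p\ge2$ the map $A$ is itself an admissible descending vector field and the abstract critical point theorem of \cite{LLW2015} applies directly to the invariant set $W=D^+_\varepsilon\cup D^-_\varepsilon$. For $1<p<2$, $\Phi$ is only $C^1$ and $A$ only continuous, so I would instead construct, on the set $X_0$ of non-critical points, a locally Lipschitz pseudo-gradient field $V\colon X_0\to X$ that is simultaneously (a) a genuine pseudo-gradient, $\langle\Phi'(u),V(u)\rangle\ge\frac12\|\Phi'(u)\|_{X^*}^2$ and $\|V(u)\|_X\le 2\|\Phi'(u)\|_{X^*}$, and (b) uniformly close in direction to $u-A(u)$, so that the flow $\dot\sigma=-V(\sigma)$ decreases $\Phi$ and keeps $\overline{D^{\pm}_\varepsilon}$ positively invariant; this is a partition-of-unity patching of the two requirements on a locally finite open cover, exactly as in \cite{CNW2019} for the purely nonlocal $p$-Laplacian. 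Once such a flow exists, any critical point of $\Phi$ lying in $X\setminus W$ is sign-changing: it is neither $\ge0$ nor $\le0$, and by the strong comparison principle it cannot sit on $\partial D^{\pm}_\varepsilon$ either. To produce the first sign-changing solution I would first minimize $\Phi$ over the descending-flow-invariant pieces of $D^+_\varepsilon$ and $D^-_\varepsilon$ to get a positive solution $u^+$ and a negative solution $u^-$, and then run a mountain-pass minimax over continuous paths in $X$ joining a point of $D^+_\varepsilon$ to a point of $D^-_\varepsilon$; since every such path must leave $W$ and $\Phi$ satisfies $(PS)$, this yields a critical value $c>\max\{\Phi(u^+),\Phi(u^-)\}$ whose critical point lies outside $W$, hence is sign-changing. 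Finally, under $(f_6)$ the functional $\Phi$ is even and $W$ is symmetric, so I would invoke the $\mathbb Z_2$-equivariant version of the theorem in \cite{LLW2015}: using the Krasnoselskii genus together with the linking provided by $(f_1)$ (positivity near $0$ on finite-codimensional subspaces) and $(f_3)$ (unboundedness below on finite-dimensional subspaces), one obtains an unbounded sequence of symmetric minimax values $c_k\to+\infty$, each attained at a critical point outside $W$, which gives infinitely many sign-changing solutions and completes the proof.
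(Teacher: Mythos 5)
Your overall strategy (auxiliary operator $A$, invariant cones $P^\pm_\epsilon$, locally Lipschitz pseudo-gradient, and the abstract invariant-set theorems of Liu--Liu--Wang) is the same as the paper's, but two of your key steps have genuine gaps. First, you define $A(u)$ as the solution of $-\Delta_p v+(-\Delta)^s_p v=f(x,u)$, with no zeroth-order shift. Under $(f_1)$--$(f_3)$ alone, $f(x,\cdot)$ carries no sign information on $(-M,M)$, so for $u\in P^-$ the right-hand side $f(x,u)$ may be positive on a large portion of $\Omega$ and $(A(u))^+$ need not be small: the cone invariance $A(\overline{P^\pm_\epsilon})\subset P^\pm_{\theta\epsilon}$ can simply fail for this $A$. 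The paper's device is to fix $\beta>0$ with $f(x,t)t+\beta|t|^p>0$ for $t\neq0$ and to define $A(u)$ through the shifted problem $-\Delta_p w+(-\Delta)^s_p w+\beta|w|^{p-2}w=f(x,u)+\beta|u|^{p-2}u$, so that $g(x,t)=f(x,t)+\beta|t|^{p-2}t$ has the sign structure needed in Lemma \ref{lmn4.6}; you allude to absorbing the bad part of $f$ by a multiple of $|u|^{p-2}u$, but this must be built into the definition of $A$, not invoked afterwards. Moreover, your route to cone invariance via a weak comparison principle plus a De Giorgi $L^\infty$ bound is both unavailable and unnecessary: for $u$ merely in the energy space and $q$ close to $p^*$ one only knows $f(x,u)\in L^{p^*/(q-1)}(\Omega)$, whose exponent can fall below $N/p$, so no uniform $L^\infty$ estimate for $A(u)$ holds; and no upgrade from $L^q$-closeness to norm-closeness is needed, since the paper estimates $\dist_\beta(A(u),P^-)\le\|(A(u))^+\|_\beta$ directly by testing the weak formulation with $(A(u))^+$ and using the growth condition, Sobolev/Poincar\'e, and $\|u^+\|_r\le\|u-v\|_r$ for every $v\in P^-$.

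Second, your minimax step rests on the claim that every path joining $D^+_\epsilon$ to $D^-_\epsilon$ must leave $W=D^+_\epsilon\cup D^-_\epsilon$; this is false, because $D^+_\epsilon\cap D^-_\epsilon$ contains the ball $\{\|u\|<\epsilon\}$, so $W$ is path-connected through a neighborhood of the origin and a one-parameter mountain-pass path can stay inside $W$. This is precisely why the paper does not use a single mountain pass between a positive and a negative solution, but applies Theorem \ref{T3.1}: a two-parameter map $\psi(k,l)=R(kv_1+lv_2)$ built from disjointly supported $v_1\le 0\le v_2$, a minimax value $c_0=\inf_\phi\sup_{u\in\phi(\Delta)\setminus W}\Phi(u)$ in which the supremum is taken only over the part of the image lying outside $W$, and the threshold condition $\sup_{\psi(\partial_0\Delta)}\Phi<c_*=\inf_{\partial P^+_\epsilon\cap\partial P^-_\epsilon}\Phi$, which together yield $M_{c_0}\setminus W\neq\emptyset$. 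The multiplicity part has the same structure: the paper uses the $G$-admissible Theorem \ref{T3.2} with odd maps $\psi_n$ built from $2n$ disjoint bumps and the genus, again with suprema over $B\setminus W$, rather than a generic symmetric mountain pass. Finally, a locally Lipschitz field $B$ close to $A$ (Lemma \ref{lmn4.7}) is needed for every $p>1$, not only for $1<p<2$: $A$ is merely continuous and compact, and the deformation is produced by solving an ODE, so one cannot ``apply the flow of $A$ directly'' when $p\ge 2$.
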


\par The conditions $(f_3)$ and $(f_4)$ were introduced by Ambrosetti and Rabinowitz \cite{AR1973}, which are now known as Ambrosetti-Rabinowitz (AR)-conditions. It is evident that $(f_4)$ constitutes a weaker condition than $(f_3)$. Weth \cite{W2006}, introduced the concept of ``doubling energy" for sign-changing solutions, that is, the energy level for the least energy of sign-changing solutions is strictly greater than twice that of the ``ground-state" energy. In the next theorem, we employ the Nehari manifold method combined with the Brouwer degree theory and a constrained variational argument with the weak condition $(f_4)$, to establish the existence of a least energy sign-changing solution and a nontrivial ground-state solution to the problem \eqref{MP}.  Moreover, we extend the doubling property for the mixed local and nonlocal $p$-Laplacian. 
 
 \begin{theorem}\label{T2.4}
     Assume that $f\in C^1(\bar{\Omega}\cross \mathbb{R},\mathbb{R})$ and the conditions $(f_1)$, $(f_2)$, $(f_4)$, $(f_5)$ hold. Then problem \eqref{MP} admits one least energy sign-changing solution $u^*\in \mathbb{X}_0^{s,p}(\Omega)$ and one nontrivial solution $u_*\in \mathbb{X}_0^{s,p}(\Omega)$ such that $m_s=\Phi(u^*)$ and $c_s=\Phi(u_*)$ such that $m_s>2c_s$. (see Section \ref{sec2}).
 \end{theorem}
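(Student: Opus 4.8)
The plan is to run a Nehari-manifold argument for the ground state together with a constrained minimization over a ``sign-changing Nehari set'' for the nodal solution, using the Brouwer degree to make the associated projection --- and the deformation that promotes the minimizer to a critical point --- survive the nonlinear nonlocal coupling. Write $X:=\mathbb{X}_0^{s,p}(\Omega)$, $\|u\|^p=\|\nabla u\|_{L^p(\Omega)}^p+[u]_{s,p}^p$, and let $\Phi(u)=\tfrac1p\|u\|^p-\int_\Omega F(x,u)\d x$ be the energy functional of \eqref{MP}. By $(f_1)$--$(f_2)$ and the compact embedding of $X$ into $L^q(\Omega)$ for $q\in(p,p^*)$, $\Phi\in C^1(X)$ is weakly lower semicontinuous with compact nonlinear part (and $\Phi\in C^2$ when $p=2$, which uses $f\in C^1$). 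The structural fact I will use repeatedly is that, in contrast to the purely local case, for $u=u^++u^-$ the Gagliardo term does not split: $[u]_{s,p}^p=[u^+]_{s,p}^p+[u^-]_{s,p}^p+2\mathcal B(u^+,u^-)$ with a coupling $\mathcal B(u^+,u^-)>0$ whenever $u^\pm\neq0$, so that $\Phi(u)=\Phi(u^+)+\Phi(u^-)+\tfrac2p\mathcal B(u^+,u^-)$ and $\langle\Phi'(u),u^\pm\rangle=\langle\Phi'(u^\pm),u^\pm\rangle+(\text{a strictly positive coupling term})$.

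First I would treat the ground state on $\mathcal N:=\{u\in X\setminus\{0\}:\langle\Phi'(u),u\rangle=0\}$ with $c_s:=\inf_{\mathcal N}\Phi$. Conditions $(f_1)$ (so $t\mapsto\Phi(tu)$ is increasing near $0$), $(f_4)$ (so $\Phi(tu)\to-\infty$) and $(f_5)$ (so the fibering map has a single interior critical point) yield, for each $u\neq0$, a unique $t_u>0$ with $t_uu\in\mathcal N$ and $\Phi(t_uu)=\max_{t>0}\Phi(tu)$; $(f_1)$--$(f_2)$ plus the Sobolev inequality give $\inf_{\mathcal N}\|u\|>0$, hence $c_s>0$. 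Since $(f_5)$ makes $u\mapsto\tfrac1pf(x,u)u-F(x,u)$ nonnegative and monotone while $(f_4)$ makes it unbounded, $\Phi$ is coercive on $\mathcal N$; a bounded minimizing sequence then converges weakly (up to a subsequence) to some $u_*$ which, by the uniform lower bound and $L^q$-compactness, is nonzero, lies in $\mathcal N$ and attains $c_s$, and the natural-constraint property (again via $(f_5)$, which needs $f\in C^1$) forces $\Phi'(u_*)=0$. As every nontrivial weak solution of \eqref{MP} lies in $\mathcal N$, $u_*$ is a ground state with $\Phi(u_*)=c_s$.

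Next, for the nodal solution I would minimize $\Phi$ over $\mathcal M:=\{u\in X:u^\pm\neq0,\ \langle\Phi'(u),u^+\rangle=\langle\Phi'(u),u^-\rangle=0\}$, $m_s:=\inf_{\mathcal M}\Phi$. The crux is a projection lemma: for any $u$ with $u^\pm\neq0$ there is a \emph{unique} pair $(\sigma_u,\tau_u)\in(0,\infty)^2$ with $\sigma_uu^++\tau_uu^-\in\mathcal M$, and it realizes $\max_{\sigma,\tau\ge0}\Phi(\sigma u^++\tau u^-)$. Existence follows because $(f_1)$ makes $(\sigma,\tau)\mapsto\Phi(\sigma u^++\tau u^-)$ increasing in each variable near the coordinate axes and $(f_4)$ makes it tend to $-\infty$ along rays, so the maximum is interior; equivalently --- and this is the form I will need below --- the planar field $V(\sigma,\tau)=\big(\langle\Phi'(\sigma u^++\tau u^-),\sigma u^+\rangle,\,\langle\Phi'(\sigma u^++\tau u^-),\tau u^-\rangle\big)$ points inward on a large square $Q$, so $\deg(V,Q,0)=1$; uniqueness of the zero is extracted from the strict monotonicity in $(f_5)$ by comparing the two defining identities. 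Consequently $\mathcal M\neq\emptyset$, the estimates of the previous step applied to $u^\pm$ give $m_s>0$ and coercivity of $\Phi|_{\mathcal M}$, and for a minimizing sequence $u_n\in\mathcal M$ one has boundedness together with $\|(u_n)^\pm\|$ bounded away from $0$; hence $u_n\rightharpoonup\bar u$ with $\bar u^\pm\neq0$ and $(u_n)^\pm\rightharpoonup\bar u^\pm$. Applying the projection lemma to $\bar u$ produces $u^*:=\sigma_{\bar u}\bar u^++\tau_{\bar u}\bar u^-\in\mathcal M$, so $\Phi(u^*)\ge m_s$, while weak lower semicontinuity combined with the fact that $(1,1)$ maximizes $\Phi(\sigma(u_n)^++\tau(u_n)^-)$ (because $u_n\in\mathcal M$) forces $\Phi(u^*)\le\liminf_n\Phi(u_n)=m_s$; thus $\Phi(u^*)=m_s$. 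That $u^*$ is a free critical point is the standard deformation argument: were $\Phi'(u^*)\neq0$, a localized negative pseudo-gradient flow --- a genuine pseudo-gradient vector field is needed here when $p<2$, since $\Phi$ is only $C^1$ --- would lower $\Phi$ along the two-parameter family $(\sigma,\tau)\mapsto\sigma(u^*)^++\tau(u^*)^-$, and since $V$ has nonzero Brouwer degree at its unique zero $(1,1)$, the deformed family still meets $\mathcal M$, contradicting $\Phi(u^*)=m_s$. For the doubling, the coupling identity gives $\langle\Phi'((u^*)^\pm),(u^*)^\pm\rangle<0$, so the fibering maps place $\alpha(u^*)^+$ and $\beta(u^*)^-$ in $\mathcal N$ for some $\alpha,\beta\in(0,1)$; then $2c_s\le\Phi(\alpha(u^*)^+)+\Phi(\beta(u^*)^-)=\Phi(\alpha(u^*)^++\beta(u^*)^-)-\tfrac2p\mathcal B(\alpha(u^*)^+,\beta(u^*)^-)<\Phi(\alpha(u^*)^++\beta(u^*)^-)\le\max_{\sigma,\tau\ge0}\Phi(\sigma(u^*)^++\tau(u^*)^-)=\Phi(u^*)=m_s$ by the projection lemma, so $m_s>2c_s$.

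Finally, for the exactly-two-nodal-domains claim when $p=2$: $u^*$ changes sign, hence has at least two nodal domains, and if it had at least three then one of $\{u^*>0\}$, $\{u^*<0\}$ would be disconnected; say $u^+=w_1+w_2$ with $w_1,w_2\ge0$ nonzero and supported on disjoint unions of positive nodal domains, and put $w_3:=u^-$. Since $p=2$ the fractional term is a bilinear form, $[u]_{s,2}^2=\sum_i[w_i]_{s,2}^2+2\sum_{i<j}\langle w_i,w_j\rangle_s$ with $\langle w_1,w_2\rangle_s<0$ (two nonnegative functions with disjoint supports) and $\langle w_1,w_3\rangle_s,\langle w_2,w_3\rangle_s>0$ (opposite signs). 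Testing $\Phi'(u^*)=0$ against $w_1$ and $w_3$ and tracking these signs shows that the projection of the sign-changing function $w_1+w_3$ onto $\mathcal M$ has both parameters $<1$; combined with the strict superlinearity of $F$ and the observation that discarding $w_2$ strictly decreases $\Phi$ --- here the bilinearity is exactly what keeps the cross-term bookkeeping manageable --- this produces an element of $\mathcal M$ with energy strictly below $m_s$, a contradiction. Hence $u^*$ has precisely two nodal domains. I expect the main obstacle to be the uniqueness part of the projection lemma: the nonlinear nonlocal coupling destroys the explicit fibering computations available for the Laplacian, so the needed monotonicity must be wrung out of $(f_5)$ alone; secondary difficulties are the $C^1$ pseudo-gradient construction forced by $p<2$ and the delicate cross-term accounting in the nodal count.
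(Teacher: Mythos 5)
Up to and including the inequality $m_s>2c_s$, your plan is correct and is essentially the route the paper takes: unique fibering projection onto $\mathcal N$ under $(f_1),(f_2),(f_4),(f_5)$, attainment of $c_s$; existence/uniqueness of the pair projection onto $\mathcal M$ via a Miranda/degree argument on a square plus the strict monotonicity in $(f_5)$; attainment of $m_s$ along a minimizing sequence with $(u_n)^\pm$ bounded away from zero; criticality of the minimizer through a quantitative deformation combined with Brouwer degree of the planar field; and the strictly positive nonlocal coupling between $u^+$ and $u^-$ driving the doubling. Your minor deviations are legitimate and in places cleaner: you obtain the nonzero degree at $(1,1)$ by excision from the large square together with uniqueness of the zero, instead of the paper's explicit computation of $\det J_1'(1,1)$; you pass to the limit on $\mathcal M$ using the maximality of $(1,1)$ plus weak lower semicontinuity rather than the paper's Fatou argument with $\mathcal H(x,t)=f(x,t)t-pF(x,t)$; and you prove $m_s>2c_s$ from the positivity of the coupling rather than from the monotonicity of $\mathcal H$ via the paper's Lemma \ref{lmn5.8}. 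None of this creates problems.

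The genuine gap is in your $p=2$ nodal-domain argument, where both key claims fail as stated. Write $u^+=w_1+w_2$, $w_3=u^-$, and let $\mathcal E(v,w)=\int_\Omega\int_\Omega\frac{(v(x)-v(y))(w(x)-w(y))}{|x-y|^{N+2s}}\d x\d y$. Testing $\Phi'(u^*)=0$ with $w_1$ and $w_3$ and using the disjoint supports gives $\langle\Phi'(w_1+w_3),w_1\rangle=-\mathcal E(w_2,w_1)>0$ and $\langle\Phi'(w_1+w_3),w_3\rangle=-\mathcal E(w_2,w_3)<0$, because $\mathcal E(w_2,w_1)<0$ (equal signs, disjoint supports) while $\mathcal E(w_2,w_3)>0$ (opposite signs) -- exactly the sign pattern you record. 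So the signs are mixed, and your conclusion that both projection parameters of $w_1+w_3$ onto $\mathcal M$ are $<1$ does not follow; the positive derivative in the $w_1$-direction pushes that parameter above $1$, if anything. Likewise ``discarding $w_2$ strictly decreases $\Phi$'' is unjustified: the same testing yields $\Phi(u^*)-\Phi(w_1+w_3)=\tfrac12\bigl[\mathcal E(w_1,w_2)+\mathcal E(w_3,w_2)\bigr]+\tfrac12\int_\Omega \mathcal H(x,w_2)\d x$, whose sign is indeterminate since $\mathcal E(w_1,w_2)<0$ and its magnitude depends on the geometry of the supports. The paper argues differently: it projects the sign-changing pair $u_1^*+u_2^*$ onto $\mathcal M$ with parameters $k,l\in(0,1]$ and compares energies only through $\mathcal H$, namely $m_s\le\Phi(ku_1^*+lu_2^*)=\tfrac1p\int_\Omega\mathcal H(x,ku_1^*+lu_2^*)\d x\le\tfrac1p\int_\Omega\bigl[\mathcal H(x,u_1^*)+\mathcal H(x,u_2^*)\bigr]\d x<\tfrac1p\int_\Omega\mathcal H(x,u^*)\d x=m_s$, the strict inequality coming from $\int_\Omega\mathcal H(x,u_3^*)\d x>0$, with no claim that dropping the third piece lowers the energy. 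Even there, the delicate point is the bound $k,l\le1$, i.e.\ controlling the nonlocal cross terms with $u_3^*$; this is precisely the issue your sketch would have to resolve, and your current sign bookkeeping shows it cannot be resolved the way you propose.
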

 \begin{cor} 
       The results in Theorem \ref{T2.4} true when $(f_3)$ is used instead of $(f_4)$.
 \end{cor}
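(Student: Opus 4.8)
The strategy rests on the remark made just before Theorem \ref{T2.4}: within that theorem the hypothesis $(f_4)$ is used only to provide the superlinear growth of the nonlinearity, and the Ambrosetti--Rabinowitz condition $(f_3)$ already forces this. So the plan is two steps: (i) show that $(f_3)$ (together with the standing assumption that $f$ is continuous) implies $(f_4)$, uniformly in $x\in\bar{\Omega}$; and (ii) apply Theorem \ref{T2.4} to the assumption set $\{\,f\in C^1(\bar{\Omega}\times\mathbb{R}),\ (f_1),\ (f_2),\ (f_3),\ (f_5)\,\}$, which after (i) contains $(f_1),(f_2),(f_4),(f_5)$ and therefore yields every conclusion listed in Theorem \ref{T2.4} directly.

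For step (i), the first thing to do is to integrate the differential inequality in $(f_3)$ into a power-type lower bound for $F$. Fix $x\in\bar{\Omega}$. For $u>M$ the function $u\mapsto F(x,u)\,u^{-\mu}$ has derivative $u^{-\mu-1}\bigl(f(x,u)u-\mu F(x,u)\bigr)\ge 0$, so it is non-decreasing on $[M,\infty)$ and hence $F(x,u)\ge F(x,M)M^{-\mu}u^{\mu}$ there; the change of variables $v=-u$ turns $(f_3)$ into the same monotonicity statement on $[M,\infty)$ for $v\mapsto F(x,-v)v^{-\mu}$, giving $F(x,u)\ge F(x,-M)M^{-\mu}|u|^{\mu}$ for $u\le -M$. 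Since $F(\cdot,M)$ and $F(\cdot,-M)$ are continuous and, by $(f_3)$, strictly positive on the compact set $\bar{\Omega}$, they attain positive minima, so there is a constant $c_0>0$ independent of $x$ with $F(x,u)\ge c_0|u|^{\mu}$ for all $|u|\ge M$ and all $x\in\bar{\Omega}$.

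Now I would feed this bound back into $(f_3)$ itself: for $|u|\ge M$ and every $x\in\bar\Omega$,
\[
\frac{f(x,u)}{|u|^{p-2}u}=\frac{f(x,u)u}{|u|^{p}}\ge \frac{\mu F(x,u)}{|u|^{p}}\ge \mu c_0\,|u|^{\mu-p},
\]
and the right-hand side tends to $+\infty$ uniformly in $x$ as $|u|\to\infty$ because $\mu>p$. This is precisely $(f_4)$, uniformly in $x$, so step (i) is complete. Then Theorem \ref{T2.4} applies without modification and produces a sign-changing solution $u^*\in\mathbb{X}^{s,p}_0(\Omega)$ with $m_s=\Phi(u^*)$, a nontrivial ground state $u_*\in\mathbb{X}^{s,p}_0(\Omega)$ with $c_s=\Phi(u_*)$, the strict inequality $m_s>2c_s$, and, when $p=2$, the fact that $u^*$ has exactly two nodal domains.

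I do not expect a real obstacle here, since $(f_3)$ is strictly stronger than $(f_4)$; the only point worth a word is the uniformity in $x$ of the limit in $(f_4)$, and that is forced by the uniform constant $c_0$ obtained from the compactness of $\bar{\Omega}$. As an alternative one could avoid the implication altogether and repeat the proof of Theorem \ref{T2.4} verbatim, noting that under $(f_3)$ the boundedness of the relevant Cerami/Palais--Smale sequences — both on the Nehari-type constraint used for $m_s$ and $c_s$ and along the descending flow — is obtained in the classical Ambrosetti--Rabinowitz way and is, if anything, simpler than under $(f_4)$.
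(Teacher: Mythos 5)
Your proposal is correct and coincides with the paper's (implicit) argument: the paper justifies the corollary precisely by the observation, stated just before Theorem \ref{T2.4}, that $(f_3)$ is stronger than $(f_4)$, which is exactly your step (i) — integrating the Ambrosetti--Rabinowitz inequality to get $F(x,u)\ge c_0|u|^{\mu}$ with $c_0$ uniform by compactness of $\bar{\Omega}$, hence $(f_4)$ — followed by a direct application of Theorem \ref{T2.4}. Your write-up simply makes this implication explicit, including the uniformity in $x$, and is a valid proof of the corollary.
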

\begin{remark}
    We point out that due to the presence of nonlocal terms, we fail to conclude that the least energy sign-changing solution obtained in Theorem \ref{T2.4}, has exactly two nodal domains even for $p=2$, (see Gu {\it{et al.}}\cite{GYZ2017}, Teng \textit{et al.} \cite{teng2015}).
\end{remark}

   \par To the best of our knowledge, Theorem \ref{T2.3} and Theorem \ref{T2.4} for sign-changing solutions of a mixed local and nonlocal $p$-Laplacian are new and possibly the first in the literature even for $p=2$.

\par The rest of the paper is organized as follows: In Section \ref{sec2}, we recall some fundamental results and develop the necessary tools for the solution space related to our problem. In Section \ref{sec3}, we review some important results that are applicable to our problem. Section \ref{sec4} is devoted to establishing the existence of sign-changing solutions to problem \eqref{MP} and the existence of infinitely many such solutions. Finally, in Section \ref{sec5}, we derive the existence of least energy of the sign-changing solutions and the existence of ground-state solutions with the doubling energy property.

\section{Preliminaries and notion of solutions}\label{sec2}
\noindent In this section, we recall some fundamental properties of Sobolev spaces and define the notion of sign-changing solutions. Unless specified, throughout the paper, we assume that $\Omega\subset\mathbb{R}^N$ is a bounded domain with smooth boundary $\partial\Omega$. Recall the definitions of Sobolev spaces and the fractional Sobolev spaces \cite{NPV2012}. For $1\leq p<\infty$, the Sobolev space $W^{1,p}(\Omega)$ is defined as 
   \begin{align*}
       W^{1,p}(\Omega)=\{u \in L^p(\Omega):\nabla u \in L^p(\Omega)\},
       \end{align*}
       which is a Banach space equipped with the norm 
       \begin{equation}\label{n1-1}
           \|u\|_{W^{1,p}(\Omega)}=\|u\|_{L^p(\Omega)}+ \|\nabla u\|_{L^p(\Omega)}. 
       \end{equation}
For every $0<s<1\leq p<\infty$, the fractional Sobolev space $W^{s,p} (\Omega)$ is defined as
       \begin{equation*}
       \begin{aligned}
           W^{s,p}(\Omega)&=\left\{u \in L^p(\Omega): \frac{|u(x)-u(y)|}{|x-y|^{\frac{N}{p}+s}} \in L^p(\Omega \times \Omega)\right\},
            \end{aligned}
           \end{equation*}
           which is a Banach space endowed with the norm
           \begin{equation}\label{n s}
               \|u\|_{W^{s,p}(\Omega)}=\left(\|u\|^p_{L^p(\Omega)}+ [u]_{W^{s,p}(\Omega)}^p\right)^{\frac{1}{p}},
           \end{equation}
           where $[u]_{W^{s,p}(\Omega)}$ is the Gagliardo seminorm of $u$ which is given by
           \begin{equation}\label{n g}
               [u]_{W^{s,p}(\Omega)}=\bigg(\int_\Omega \int_\Omega \frac{|u(x)-u(y)|^p}{|x-y|^{N+ps}}\d x \d y\bigg)^\frac{1}{p}.
           \end{equation}
    Note that the norm \eqref{n s} is equivalent to the following norm,
          \begin{align}\label{n s1}
              \|u\|_{W^{s,p}(\Omega)}=\|u\|_{L^p(\Omega)}+[u]_{W^{s,p}(\Omega)}
          \end{align}
The spaces $W_0^{1,p}(\Omega)$ and $W_0^{s,p}(\Omega)$ are defined as the closure of $C_c^\infty(\Omega)$ in $W^{1,p}(\Omega)$ and $W^{s,p}(\Omega)$ with respect to the norm in \eqref{n1-1} and \eqref{n s}, respectively. Moreover, the Sobolev spaces $W_0^{1,p}(\Omega)$ and $W_0^{s,p}(\Omega)$ are characterized as
    \begin{align*}
        W_0^{1,p}(\Omega)&=\{u\in W^{1,p}(\Omega):u=0~\text{on}~ \partial \Omega\}~\text{and}\\
        W_0^{s,p}(\Omega)&=\{u\in W^{s,p}(\Omega):u=0~\text{in}~ \mathbb R^N \setminus \Omega\}.
    \end{align*}
    Let $\Omega$ be bounded. On using the Poincar\'e inequalities, we conclude that the norm \eqref{n1-1} reduces to the following homogeneous norm in $W^{1,p}_0(\Omega)$,
        \begin{equation}\label{nh1}
            \|u\|_{W^{1,p}_0(\Omega)}=\left(\int_\Omega |\nabla u|^p\right)^\frac{1}{p}.
        \end{equation}
 Similarly, the seminorm \eqref{n g} serves as a norm on $W^{s,p}_0(\Omega)$, that is $\|u\|_{W_0^{s,p}(\Omega)}=[u]_{W^{s,p}(\Omega)}$.

Recall the Sobolev inequality \cite{EVANS2022}. For every $u \in W^{1,p}(\mathbb{R}^N)$ with $1\leq p<N$, we have 
   \begin{equation}\label{eq2.9}
           \bigg(\int_{\mathbb{R}^N}|u|^{p^*}dx\bigg)^\frac{1}{p^*}\leq C\bigg(\int_{\mathbb{R}^N}|\nabla u|^{p}dx\bigg)^\frac{1}{p},
       \end{equation}
       where $C>0$ is the best embedding constant and $p^*=\frac{Np}{N-p}$. We have the fractional Sobolev inequality \cite[Theorem 6.5]{NPV2012}. For every $u \in W^{s,p}(\mathbb{R}^N)$ with $1\leq p<\frac{N}{s}$, we have 
   \begin{equation}
           \bigg(\int_{\mathbb{R}^N}|u|^{p_s^*}dx\bigg)^\frac{1}{p_s^*}\leq C\bigg(\int_{\mathbb{R}^N} \int_{\mathbb{R}^N} \frac{|u(x)-u(y)|^p}{|x-y|^{N+sp}}\d x\d y \bigg)^\frac{1}{p},
       \end{equation}
        where $C>0$ is the best embedding constant and $p_s^*=\frac{Np}{N-ps}$. We now state the following embedding results for Sobolev spaces \cite{EVANS2022,NPV2012}. 
 \begin{lemma}\label{sob eql} 
Let $\Omega$ be a bounded domain with Lipschitz boundary $\partial\Omega$. Then 
\begin{itemize}
    \item[$(a)$] For $1\leq p<N$, the spaces $W^{1,p}(\Omega)$ and $W_0^{1,p}(\Omega)$ are continuously embedded in $L^q(\Omega)$ for all $q\in[1,p^*]$ and the embedding is compact for $1\leq q<p^*=\frac{Np}{N-p}$.
    \item[$(b)$] For $0<s<1$ with $1\leq p<\frac{N}{s}$, the spaces $W^{s,p}(\Omega)$ and $W_0^{s,p}(\Omega)$ are continuously embedded in $L^q(\Omega)$, $\forall\,q\in[1,p_s^*]$ and the embedding is compact for $1\leq q<p_s^*=\frac{Np}{N-ps}$.
\end{itemize}
 \end{lemma}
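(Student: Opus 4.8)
The plan is to deduce both families of embeddings from the corresponding \emph{global} inequalities on $\mathbb{R}^N$ — the Sobolev inequality \eqref{eq2.9} for $(a)$ and the fractional Sobolev inequality \cite[Theorem 6.5]{NPV2012} for $(b)$ — by passing through a bounded linear extension operator, and to obtain the compactness from the Fr\'echet--Kolmogorov criterion in $L^p(\Omega)$ combined with interpolation. The structural point that makes this work is that a bounded domain with Lipschitz boundary is an extension domain both in the classical sense and in the fractional sense \cite[Theorem 5.4]{NPV2012}; once the statements are proved for $W^{1,p}(\Omega)$ and $W^{s,p}(\Omega)$, the cases of $W^{1,p}_0(\Omega)$ and $W^{s,p}_0(\Omega)$ are immediate from the inclusions $W^{1,p}_0(\Omega)\subset W^{1,p}(\Omega)$ and $W^{s,p}_0(\Omega)\subset W^{s,p}(\Omega)$ (using the Poincar\'e inequalities recalled in this section if one wants the homogeneous norms \eqref{nh1}, \eqref{n g} on the right-hand sides).

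For part $(a)$, I would fix a bounded linear extension operator $E\colon W^{1,p}(\Omega)\to W^{1,p}(\mathbb{R}^N)$ whose range is supported in a fixed bounded open set $U\supset\bar\Omega$. Applying \eqref{eq2.9} to $Eu$ and restricting to $\Omega$ gives
\[
\|u\|_{L^{p^*}(\Omega)}\le\|Eu\|_{L^{p^*}(\mathbb{R}^N)}\le C\|\nabla(Eu)\|_{L^p(\mathbb{R}^N)}\le C'\|u\|_{W^{1,p}(\Omega)},
\]
which is the continuous embedding $W^{1,p}(\Omega)\hookrightarrow L^{p^*}(\Omega)$. Since $|\Omega|<\infty$, H\"older's inequality yields $\|u\|_{L^q(\Omega)}\le|\Omega|^{\frac{1}{q}-\frac{1}{p^*}}\|u\|_{L^{p^*}(\Omega)}$ for every $q\in[1,p^*]$, giving the full range of continuous embeddings. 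For compactness when $1\le q<p^*$, take $\{u_n\}$ bounded in $W^{1,p}(\Omega)$; then $\{Eu_n\}$ is bounded in $W^{1,p}(\mathbb{R}^N)$ with supports in $U$, and since $\|\tau_h(Eu_n)-Eu_n\|_{L^p(\mathbb{R}^N)}\le|h|\sup_n\|\nabla(Eu_n)\|_{L^p(\mathbb{R}^N)}$, the Fr\'echet--Kolmogorov theorem yields a subsequence converging in $L^p(\mathbb{R}^N)$, hence in $L^p(\Omega)$. Interpolating $\|u_n-u_m\|_{L^q(\Omega)}\le\|u_n-u_m\|_{L^p(\Omega)}^{\theta}\|u_n-u_m\|_{L^{p^*}(\Omega)}^{1-\theta}$ for $q\in[p,p^*)$ — and H\"older directly for $q\in[1,p]$ — with the uniform $L^{p^*}$-bound upgrades this to convergence in $L^q(\Omega)$; this is the Rellich--Kondrachov theorem \cite{EVANS2022}.

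For part $(b)$, the continuous embeddings follow by the same extension-restriction argument with \eqref{eq2.9} replaced by \cite[Theorem 6.5]{NPV2012}, the gradient by the Gagliardo seminorm \eqref{n g}, $p^*$ by $p_s^*$, and $E$ by a bounded linear extension $W^{s,p}(\Omega)\to W^{s,p}(\mathbb{R}^N)$ with range supported in a fixed bounded set \cite[Theorem 5.4]{NPV2012}; H\"older on $\Omega$ again fills in $q\in[1,p_s^*]$. For compactness when $1\le q<p_s^*$ I would invoke the fractional Rellich--Kondrachov theorem \cite[Theorem 7.1]{NPV2012} to obtain $L^p(\Omega)$-precompactness of any $W^{s,p}$-bounded sequence, and then conclude convergence in $L^q(\Omega)$ for $q<p_s^*$ exactly as in $(a)$, by interpolating against the uniform $L^{p_s^*}$-bound.

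The only nonelementary inputs are the two extension theorems and the two Rellich--Kondrachov theorems; among these, the \emph{fractional} compactness is the genuinely delicate one — its proof in \cite{NPV2012} proceeds through mollification and a careful estimate of the Gagliardo seminorm rather than a naive translation bound — so in the write-up I would cite these facts and carry out only the extension-restriction, H\"older and interpolation steps explicitly. Beyond this bookkeeping I do not anticipate any real obstacle, since the statement is precisely the classical and fractional Sobolev and Rellich--Kondrachov theory specialized to bounded Lipschitz domains.
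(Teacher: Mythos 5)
Your proposal is correct and is exactly the standard argument: the paper does not prove this lemma at all but simply cites \cite{EVANS2022,NPV2012}, and the proofs in those references are precisely the extension-operator reduction to the global (fractional) Sobolev inequality plus the classical and fractional Rellich--Kondrachov compactness that you sketch. The remaining steps you carry out explicitly (cutoff to get supports in a fixed bounded set, H\"older for $q\le p$, interpolation against the uniform $L^{p^*}$ or $L^{p_s^*}$ bound for $p\le q<p^*$ or $q<p_s^*$) are all sound.
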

  The next lemma is due to \cite[Lemma 2.1]{BSM2022} and \cite[Proposition 2.2]{NPV2012}, which plays a crucial role in studying our problem \eqref{MP}.
   \begin{lemma}
   Let $\Omega\subset\mathbb{R}^N$ be a bounded domain with Lipschitz boundary $\partial\Omega$. Then for $0<s<1 \leq p< \infty$, there exists $C=C(N,p,s)>0$ such that 
       \begin{equation}\label{eq2.5IM}
       \begin{aligned}
    \|u\|_{W^{s,p}(\Omega)} \leq C\|u\|_{W^{1,p}(\Omega)}, \,\,\, \forall\, u\in W^{1,p}(\Omega).
     \end{aligned}
        \end{equation}
Moreover, for every $u \in W_0^{1,p}(\Omega)$ with $u=0$ in $\mathbb R^N\setminus\Omega$, we have
        \begin{equation}\label{sg-emb}
           \int_{\mathbb R^N} \int_{\mathbb R^N} \frac{|u(x)-u(y)|^p}{|x-y|^{N+sp}} \d x \d y \leq C \int_\Omega |\nabla u|^p \d x. 
       \end{equation}
       In particular, we have
        \begin{equation}\label{sg-emb2}
           \int_{\Omega} \int_{\Omega} \frac{|u(x)-u(y)|^p}{|x-y|^{N+sp}} \d x \d y \leq C \int_\Omega |\nabla u|^p \d x \,\,\, \forall \, u \in W_0^{1,p}(\Omega).
       \end{equation}
       \end{lemma}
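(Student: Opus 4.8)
The plan is to derive all three estimates from a single device: split the Gagliardo double integral $\int\!\!\int\frac{|u(x)-u(y)|^p}{|x-y|^{N+ps}}$ into the region $\{|x-y|\geq 1\}$, where it is controlled by an $L^p$ norm, and the region $\{|x-y|<1\}$, where it is controlled by the gradient $L^p$ norm. To reduce each of \eqref{eq2.5IM} and \eqref{sg-emb} to an estimate on the whole space I would use two different extensions. For \eqref{eq2.5IM}, since $\Omega$ is a bounded Lipschitz domain there is a bounded Sobolev extension operator $E\colon W^{1,p}(\Omega)\to W^{1,p}(\mathbb R^N)$; composing with a cut-off I may assume $Eu$ is supported in a fixed ball, and then $[u]_{W^{s,p}(\Omega)}^p\leq \int_{\mathbb R^N}\!\int_{\mathbb R^N}\frac{|Eu(x)-Eu(y)|^p}{|x-y|^{N+ps}}\,\d x\,\d y$. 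For \eqref{sg-emb} the hypothesis $u\equiv 0$ in $\mathbb R^N\setminus\Omega$ forces the zero extension $\bar u$ to lie in $W^{1,p}(\mathbb R^N)$ already, with $\|\nabla\bar u\|_{L^p(\mathbb R^N)}=\|\nabla u\|_{L^p(\Omega)}$, so no extension operator is needed. Finally \eqref{sg-emb2} is immediate from \eqref{sg-emb} since $\Omega\times\Omega\subseteq\mathbb R^N\times\mathbb R^N$ and the integrand is nonnegative.

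For the far region, writing $w\in\{Eu,\bar u\}$ and using $|a-b|^p\leq 2^{p-1}(|a|^p+|b|^p)$, the substitution $z=x-y$, and Tonelli's theorem, one gets
$$\int_{\mathbb R^N}\!\int_{\{|x-y|\geq 1\}}\frac{|w(x)-w(y)|^p}{|x-y|^{N+ps}}\,\d x\,\d y\ \leq\ 2^{p}\,\|w\|_{L^p(\mathbb R^N)}^{p}\int_{\{|z|\geq 1\}}\frac{\d z}{|z|^{N+ps}},$$
and the remaining integral is finite because $N+ps>N$.

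For the near region, I would first record (for smooth $w$, extending to $W^{1,p}(\mathbb R^N)$ by density) the identity $w(x)-w(y)=\int_0^1\nabla w(y+t(x-y))\cdot(x-y)\,\d t$, apply Jensen's inequality in $t$, and then the substitutions $z=x-y$ and $\eta=y+tz$ together with Fubini to obtain
$$\int_{\mathbb R^N}\!\int_{\{|x-y|< 1\}}\frac{|w(x)-w(y)|^p}{|x-y|^{N+ps}}\,\d x\,\d y\ \leq\ \|\nabla w\|_{L^p(\mathbb R^N)}^{p}\int_{\{|z|< 1\}}\frac{\d z}{|z|^{N+ps-p}},$$
where the remaining integral converges \emph{precisely because} $N+ps-p<N$, i.e. $0<s<1$. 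Passing from smooth functions to $w\in W^{1,p}(\mathbb R^N)$ is justified by Fatou's lemma applied to a subsequence.

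Putting the two bounds together with $w=Eu$ and using $\|Eu\|_{W^{1,p}(\mathbb R^N)}\leq C\|u\|_{W^{1,p}(\Omega)}$ and $\|u\|_{L^p(\Omega)}\leq\|u\|_{W^{1,p}(\Omega)}$ yields \eqref{eq2.5IM}; with $w=\bar u$ one obtains \eqref{sg-emb} after absorbing the far-term contribution $\|\bar u\|_{L^p(\mathbb R^N)}^p=\|u\|_{L^p(\Omega)}^p$ into $\|\nabla u\|_{L^p(\Omega)}^p$ via the Poincaré inequality, which is legitimate since $\Omega$ is bounded. I expect the only genuine obstacle to be the bookkeeping around the near-diagonal term: keeping track of the convergence of $\int_{\{|z|<1\}}|z|^{p-N-ps}\,\d z$ (the one place the bound $s<1$ is essential), and, for \eqref{eq2.5IM}, the fact that the segment joining $x$ to $y$ need not lie inside $\Omega$, which is exactly why the extension step is carried out first.
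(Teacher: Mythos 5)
Your argument is correct, and it is essentially the standard proof behind the results the paper invokes here: the paper itself does not prove this lemma but cites it (Lemma 2.1 of Buccheri--da Silva--de Miranda and Proposition 2.2 of Di Nezza--Palatucci--Valdinoci), and those proofs rest on exactly your devices — a bounded extension operator for the Lipschitz domain, the far/near splitting of the Gagliardo integral with $\int_{\{|z|\ge 1\}}|z|^{-N-ps}\,\d z<\infty$ and $\int_{\{|z|<1\}}|z|^{p-N-ps}\,\d z<\infty$ (the latter precisely because $s<1$), the fundamental theorem of calculus plus Jensen on the near-diagonal part, and Poincar\'e to absorb the $L^p$ term in the $W_0^{1,p}$ case. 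Two minor remarks: the cut-off after extension is unnecessary, since the far-region bound only needs $\|Eu\|_{L^p(\mathbb R^N)}<\infty$; and for \eqref{sg-emb} it is the membership $u\in W_0^{1,p}(\Omega)$ (i.e.\ $u$ is a $W^{1,p}$-limit of $C_c^\infty(\Omega)$ functions), rather than the mere pointwise vanishing outside $\Omega$, that guarantees the zero extension lies in $W^{1,p}(\mathbb R^N)$ with $\|\nabla \bar u\|_{L^p(\mathbb R^N)}=\|\nabla u\|_{L^p(\Omega)}$ — you use the correct fact, but the phrasing should attribute it to $W_0^{1,p}$ membership.
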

 \noindent With the preliminaries above, we now define the solution space for our problem \eqref{MP}.   
\begin{definition}
  Let $\Omega\subset\mathbb{R}^N$ be a bounded domain with Lipschitz boundary $\partial \Omega$ and let $0<s<1\leq p<\infty$. We define the Sobolev space $\mathbb{X}_0^{s,p}(\Omega)$ as the closure of $C_c^\infty(\Omega)$ with respect to the following norm,
  \begin{align}\label{eq2.7 N}
      \|u\|_{\mathbb{X}_0^{s,p}(\Omega)}&=\bigg(\int_\Omega|\nabla u|^p \d x+{\int_{\mathbb{R}^N} \int_{\mathbb{R}^N}} \frac{|u(x)-u(y)|^p}{|x-y|^{N+sp}}\d x\d y \bigg)^\frac{1}{p},\,\,\,\,\forall\, u\in C_c^\infty(\Omega).
      \end{align} 
      \end{definition}

      \begin{remark}\label{rm 2.1}
      Recall the Poincar\'e inequality for $W_0^{1,p}(\Omega)$. For all $ u \in W_0^{1,p}(\Omega)$, there exists $C>0$ such that
  \begin{align}\label{lmn2.1 PQ}
        \|u\|_{L^p(\Omega)}\leq C\|\nabla u\|_{L^p(\Omega)}.
        \end{align}
        Thus, using \eqref{sg-emb}, \eqref{sg-emb2} and \eqref{lmn2.1 PQ}, we obtain the following equivalent norms on $\mathbb{X}_0^{s,p}(\Omega)$.
      \begin{align*}
          \|u\|_{\mathbb{X}_0^{s,p}(\Omega)}&:=\bigg(\int_\Omega|\nabla u|^p \d x \bigg)^\frac{1}{p},\,\,\,\forall \, u \in \mathbb{X}_0^{s,p}(\Omega) \text{ and }\\
          \|u\|_{\mathbb{X}_0^{s,p}(\Omega)}&:=\bigg(\int_\Omega|\nabla u|^p \d x+{\int_{\Omega} \int_{\Omega}} \frac{|u(x)-u(y)|^p}{|x-y|^{N+sp}}\d x\d y \bigg)^\frac{1}{p},\,\,\,\forall \, u \in \mathbb{X}_0^{s,p}(\Omega).
      \end{align*}
Therefore, the space $\mathbb{X}_0^{s,p}(\Omega)$ can be characterized as 
    {\begin{equation}\nonumber 
          \mathbb{X}_0^{s,p} (\Omega)={\{u\in W_0^{1,p} (\Omega): u=0 \text{ in } \mathbb R^N \setminus \Omega}  \}.
          \end{equation}}
    \end{remark}
    \noindent Note that on using \eqref{eq2.9}, we get the mixed Sobolev inequality on $\mathbb{X}_0^{s,p} (\mathbb{R}^N)$, which is given by 
     \begin{equation}\label{eq 2.15}
           \bigg(\int_{\mathbb{R}^N}|u(x)|^{p^*}\d x\bigg)^\frac{1}{p^*}\leq C\bigg(\int_{\mathbb{R}^N}|\nabla u(x)|^{p}\d x +\int_{\mathbb{R}^N} \int_{\mathbb{R}^N} \frac{|u(x)-u(y)|^p}{|x-y|^{N+sp}}\d x\d y\bigg)^\frac{1}{p},
       \end{equation}
       where $C>0$ is the best embedding constant and $p^*=\frac{Np}{N-p}>\frac{Np}{N-ps}:=p_s^*$. Therefore,  Remark \ref{rm 2.1} combined with the inequality \eqref{eq2.9} assert that
          \begin{equation*}
            \|u\|_{L^p{^*}(\Omega)}=\|u\|_{L^p{^*}(\mathbb R^N)} \leq C\|\nabla u\|_{L^p(\mathbb R^N) }\leq C\|u\|_{\mathbb{X}_0^{s,p}(\Omega)}, \, \forall\, u \in \mathbb{X}_0^{s,p}(\Omega).
             \end{equation*} 
    \begin{theorem}\label{thm cpt}
     Let $0<s<1\leq p<\infty$ and let $\Omega\subset\mathbb{R}^N$ be a bounded domain with Lipschitz boundary $\partial \Omega$. Then we have
             \begin{equation}\label{eq2.8S}
                  \|u\|_{L^p{^*}(\Omega)} \leq C\|u\|_{\mathbb{X}_0^{s,p}(\Omega)},\, \forall\, u \in \mathbb{X}_0^{s,p}(\Omega).
             \end{equation}
             Moreover, the embedding $\mathbb{X}_0^{s,p}(\Omega)\hookrightarrow L^q(\Omega)$ is continuous for $1\leq q\leq p^*$ and is compact $1\leq q< p^*$.
    \end{theorem}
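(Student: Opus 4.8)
The plan is to prove Theorem \ref{thm cpt} by transferring the classical Sobolev embedding for $W_0^{1,p}(\Omega)$ to the mixed space $\mathbb{X}_0^{s,p}(\Omega)$, using the fact that the Gagliardo seminorm term only adds to the norm. First I would establish the continuous embedding into $L^{p^*}(\Omega)$. By Remark \ref{rm 2.1}, for $u\in\mathbb{X}_0^{s,p}(\Omega)$ we have $u\in W_0^{1,p}(\Omega)$ with $u=0$ in $\mathbb{R}^N\setminus\Omega$, so extending $u$ by zero gives $u\in W^{1,p}(\mathbb{R}^N)$ with $\|\nabla u\|_{L^p(\mathbb{R}^N)}=\|\nabla u\|_{L^p(\Omega)}$. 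Applying the Sobolev inequality \eqref{eq2.9} on $\mathbb{R}^N$ and then the norm equivalence from Remark \ref{rm 2.1}, namely $\|\nabla u\|_{L^p(\Omega)}=\|u\|_{\mathbb{X}_0^{s,p}(\Omega)}$ (or the fact that $\|\nabla u\|_{L^p(\Omega)}\le\|u\|_{\mathbb{X}_0^{s,p}(\Omega)}$ when the full norm \eqref{eq2.7 N} is used), we obtain \eqref{eq2.8S}. This is essentially the chain of inequalities already displayed just before the theorem statement, so the first part is short.

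Next I would handle the continuous embedding into $L^q(\Omega)$ for $1\le q\le p^*$. Since $\Omega$ is bounded, for any $1\le q\le p^*$ the inclusion $L^{p^*}(\Omega)\hookrightarrow L^q(\Omega)$ is continuous by Hölder's inequality, with embedding constant depending on $|\Omega|$; composing with \eqref{eq2.8S} gives $\|u\|_{L^q(\Omega)}\le C\|u\|_{\mathbb{X}_0^{s,p}(\Omega)}$ for all such $q$.

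For the compactness claim when $1\le q<p^*$, I would argue as follows. Let $(u_n)$ be a bounded sequence in $\mathbb{X}_0^{s,p}(\Omega)$. By Remark \ref{rm 2.1}, $(u_n)$ is bounded in $W_0^{1,p}(\Omega)$ (indeed $\|\nabla u_n\|_{L^p(\Omega)}\le\|u_n\|_{\mathbb{X}_0^{s,p}(\Omega)}$). By Lemma \ref{sob eql}$(a)$, the embedding $W_0^{1,p}(\Omega)\hookrightarrow\hookrightarrow L^q(\Omega)$ is compact for $1\le q<p^*$, so a subsequence of $(u_n)$ converges strongly in $L^q(\Omega)$. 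Hence the embedding $\mathbb{X}_0^{s,p}(\Omega)\hookrightarrow L^q(\Omega)$ is compact for $1\le q<p^*$ as well.

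There is no real obstacle here: the only subtlety is bookkeeping the norm equivalences in Remark \ref{rm 2.1} correctly so that the mixed-space norm dominates the pure local norm $\|\nabla u\|_{L^p(\Omega)}$ (which is immediate since the extra Gagliardo term is nonnegative), and being careful that the zero extension outside $\Omega$ is legitimate, which is guaranteed because elements of $\mathbb{X}_0^{s,p}(\Omega)$ vanish in $\mathbb{R}^N\setminus\Omega$. The mild point worth a sentence is that one should note $p^*=\frac{Np}{N-p}>\frac{Np}{N-ps}=p_s^*$, so the local operator controls the integrability exponent and the nonlocal term does not spoil it; I would remark that this is precisely why the embedding exponent is the \emph{local} critical exponent $p^*$ rather than the fractional one.
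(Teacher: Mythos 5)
Your proposal is correct and follows essentially the same route as the paper: the paper obtains \eqref{eq2.8S} from the very chain $\|u\|_{L^{p^*}(\Omega)}=\|u\|_{L^{p^*}(\mathbb R^N)}\leq C\|\nabla u\|_{L^p(\mathbb R^N)}\leq C\|u\|_{\mathbb{X}_0^{s,p}(\Omega)}$ displayed before the theorem (zero extension plus \eqref{eq2.9} and Remark \ref{rm 2.1}), and the $L^q$ continuity and compactness are reduced, exactly as you do, to the classical embeddings of $W_0^{1,p}(\Omega)$ in Lemma \ref{sob eql}$(a)$ via the norm comparison of Remark \ref{rm 2.1}.
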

  \noindent The following theorem characterizes the space $\mathbb{X}^{s,p}_0(\Omega)$.
            \begin{theorem}\label{thm prop}
                Let $0<s<1\leq p<\infty$ and let $\Omega\subset\mathbb{R}^N$ be a bounded domain with Lipschitz boundary $\partial \Omega$. Then, the space $\mathbb{X}^{s,p}_0(\Omega)$ is a Banach space endowed with the norm \eqref{eq2.7 N}, for all $p\in[1,\infty)$. Moreover, it is separable for all $p\in[1,\infty)$ and is reflexive for all $p\in(1,\infty)$. In particular, when $p=2$, the space $\mathbb{X}_0^{s,2}(\Omega)$ reduces to a Hilbert space with respect to the inner product, 
                \begin{equation}
                    \langle u,v\rangle_{\mathbb{X}_0^{s,2}(\Omega)}=\int_{\Omega} \nabla u\cdot\nabla v \d x+{{\int_{\mathbb{R}^N} \int_{\mathbb{R}^N}}}\frac{(u(x)-u(y))(v(x)-v(y))}{|x-y|^{N+2s}} \d x \d y,
                \end{equation}
                where ``$\cdot$" denotes the standard scalar product in $\mathbb{R}^N$.
            \end{theorem}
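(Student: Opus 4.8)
The plan is to establish the three assertions of Theorem~\ref{thm prop} in turn --- being a Banach space, separability, reflexivity, and the Hilbert structure when $p=2$ --- by leveraging the fact that $\mathbb{X}_0^{s,p}(\Omega)$ embeds isometrically into a product of known spaces. First I would verify that $\|\cdot\|_{\mathbb{X}_0^{s,p}(\Omega)}$ is genuinely a norm on $C_c^\infty(\Omega)$: homogeneity and the triangle inequality follow from writing $\|u\|_{\mathbb{X}_0^{s,p}(\Omega)} = \bigl\| (\,\|\nabla u\|_{L^p(\Omega)},\, [u]_{W^{s,p}(\Omega)}\,) \bigr\|_{\ell^p}$, so the triangle inequality is the composition of Minkowski's inequality in $L^p(\Omega)$ (resp.\ in $L^p(\Omega\times\Omega)$ with the weight $|x-y|^{-(N+ps)}$) with the triangle inequality in $\ell^p_2$; positive-definiteness uses the Poincar\'e inequality \eqref{lmn2.1 PQ} to conclude $u=0$ a.e.\ from $\nabla u = 0$. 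Completeness is then automatic: by definition $\mathbb{X}_0^{s,p}(\Omega)$ is the closure of $C_c^\infty(\Omega)$ under this norm, and one realizes that closure concretely as a subspace of the complete space $W_0^{1,p}(\Omega)$ (using \eqref{eq2.5IM}--\eqref{sg-emb2}, any $\|\cdot\|_{\mathbb{X}_0^{s,p}(\Omega)}$-Cauchy sequence is $W_0^{1,p}$-Cauchy, hence converges in $W_0^{1,p}(\Omega)$ to some $u$ which, by Fatou applied to the Gagliardo double integral, also has finite seminorm and is the $\mathbb{X}_0^{s,p}$-limit); alternatively one invokes the standard fact that the completion of any normed space exists and is unique, and identifies it via the characterization in Remark~\ref{rm 2.1}.

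For separability and reflexivity I would use the isometric embedding
\[
T : \mathbb{X}_0^{s,p}(\Omega) \longrightarrow L^p(\Omega;\mathbb{R}^N) \times L^p(\Omega\times\Omega),
\qquad
Tu = \Bigl( \nabla u,\; \tfrac{u(x)-u(y)}{|x-y|^{(N+ps)/p}} \Bigr),
\]
which is an isometry when the target carries the $\ell^p$-combination of the two $L^p$-norms. The product space is separable for $p\in[1,\infty)$ and reflexive (indeed uniformly convex) for $p\in(1,\infty)$, since each factor is an $L^p$ space over a $\sigma$-finite measure space. A closed subspace of a separable space is separable, and a closed subspace of a reflexive space is reflexive; since $\mathbb{X}_0^{s,p}(\Omega)$ is complete, $T(\mathbb{X}_0^{s,p}(\Omega))$ is a closed subspace of the product, so both properties transfer back through the isometry $T$. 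Finally, for $p=2$ the norm \eqref{eq2.7 N} is exactly the one induced by the bilinear form $\langle u,v\rangle_{X_0^{s,2}(\Omega)}$ displayed in the statement; one checks this form is symmetric, bilinear, and positive-definite (again via Poincar\'e), so it is an inner product, and $\|u\|_{\mathbb{X}_0^{s,2}(\Omega)}^2 = \langle u,u\rangle_{X_0^{s,2}(\Omega)}$, making the complete space $X_0^{s,2}(\Omega)$ a Hilbert space; the Cauchy--Schwarz inequality guaranteeing continuity of the form is immediate.

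The only mildly delicate point --- and the one I would be most careful about --- is making the completion concrete rather than abstract: one must confirm that the abstract completion of $(C_c^\infty(\Omega),\|\cdot\|_{\mathbb{X}_0^{s,p}(\Omega)})$ coincides with the function space $\{u\in W^{1,p}(\Omega): u=0 \text{ in } \mathbb{R}^N\setminus\Omega,\ [u]_{W^{s,p}(\Omega)}<\infty\}$ of Remark~\ref{rm 2.1}, so that ``closed subspace of a product of $L^p$'s'' is a legitimate description and Fatou's lemma may be applied to pass limits under the Gagliardo integral. Once that identification is in hand, everything else is a routine transfer of Banach-space properties along the isometry $T$, and I would present it as such rather than re-deriving separability or reflexivity of $L^p$ spaces.
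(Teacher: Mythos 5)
Your proposal is correct and follows essentially the same route as the paper: the paper's proof also rests on the isometric embedding $T(u)=(\nabla u,\,(u(x)-u(y))/|x-y|^{N/p+s})$ into $L^p(\Omega)\times L^p(\Omega\times\Omega)$ and transfers separability and reflexivity through the closed-subspace property, then invokes the Hilbert structure for $p=2$. You merely supply more detail than the paper does (the Fatou argument identifying the completion concretely inside $W_0^{1,p}(\Omega)$ and the direct verification of the inner product), which is a welcome refinement rather than a different method.
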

            \begin{proof}
                By definition, $\mathbb{X}^{s,p}_0(\Omega)$ is a Banach space. For any $u\in \mathbb{X}^{s,p}_0(\Omega)$, choose $A_u(x)=\nabla u(x)$ and $B_u(x,y)=\frac{u(x)-u(y)}{|x-y|^{\frac{N}{p}+s}}$ and define the map $T: \mathbb{X}^{s,p}_0(\Omega) \rightarrow L^p(\Omega)\cross L^p({\mathbb{R}^N \cross \mathbb{R}^N})$ such that
                $$T(u)=(A_u,B_u).$$
             Since, $\|Tu\|_{L^p(\Omega)\cross L^p({\mathbb{R}^N \cross \mathbb{R}^N})}=\|u\|_{\mathbb{X}^{s,p}_0(\Omega)}$  $\forall\,u\in \mathbb{X}^{s,p}_0(\Omega)$, we obtain $T$ is an isometry into the closed subspace of $L^p(\Omega)\cross L^p({\mathbb{R}^N \cross \mathbb{R}^N}))$. Thus we get $\mathbb{X}^{s,p}_0(\Omega)$ is is reflexive for all $p\in(1,\infty)$ and is separable for all $p\in[1,\infty)$. Finally, using the fact that $W_0^{1,2}(\Omega)$ and $W_0^{s,2}(\Omega)$ are Hilbert spaces, we conclude $\mathbb{X}^{s,2}_0(\Omega)$ is a Hilbert space.
            \end{proof}
\begin{definition}
  We say $u\in \mathbb{X}_0^{s,p}(\Omega)$ is a weak solution to the problem \eqref{MP} if
    \begin{align}\label{WFMP}
    \int_\Omega |\nabla u|^{p-2} \nabla u\cdot\nabla \phi \d x&+{\int_{\mathbb{R}^N} \int_{\mathbb{R}^N}}\frac{|u(x)-u(y)|^{p-2}(u(x)-u(y))(\phi(x)-\phi(y))}{|x-y|^{N+sp}} \d x \d y \nonumber \\
    &= \int_\Omega f(x,u)\phi \d x,~\forall\,\phi\in \mathbb{X}_0^{s,p}(\Omega).
\end{align}
\end{definition}
\noindent Define the energy functional $\Phi:\mathbb{X}_0^{s,p}(\Omega)\rightarrow \mathbb R$ as follow:
\begin{align}\label{1EF}
    \Phi(u)=\frac{1}{p} \int_\Omega|\nabla u|^p \d x+\frac{1}{p} {\int_{\mathbb{R}^N} \int_{\mathbb{R}^N}}\frac{|u(x)-u(y)|^p}{|x-y|^{N+sp}}\d x\d y -\int_\Omega F(x,u) \d x.
\end{align}
From \cite{R1986} combined with $(f_1)$ and $(f_2)$, we conclude that $\Phi$ is $C^1$. Hence, 
\begin{align}\label{eq2.20}
     \langle\Phi'(u),\phi\rangle = &  {\int_{\mathbb{R}^N} \int_{\mathbb{R}^N}} \frac{|u(x)-u(y)|^{p-2}(u(x)-u(y))(\phi(x)-\phi(y))}{|x-y|^{N+sp}} \d x \d y  \nonumber\\
     &+ \int_\Omega |\nabla u|^{p-2} \nabla u\cdot\nabla \phi \d x - \int_\Omega f(x,u)\phi \d x,~\forall\,\phi \in \mathbb{X}_0^{s,p}(\Omega),
\end{align}
where $\langle.,.\rangle:=\langle.,.\rangle_{{\mathbb{X}_0^{s,p}(\Omega)}^*,\mathbb{X}_0^{s,p}(\Omega)}$ denotes the dual pair and ${\mathbb{X}_0^{s,p}(\Omega)}^*$ is the dual space of ${\mathbb{X}_0^{s,p}(\Omega)}$. Clearly, the critical points of the energy functional $\Phi$ are weak solutions to the problem \eqref{MP}.

\par Putting $\phi=u$ in \eqref{eq2.20}, we get
\begin{align}
    \langle\Phi'(u),u\rangle=\int_\Omega |\nabla u|^{p}  \d x+{\int_{\mathbb{R}^N} \int_{\mathbb{R}^N}} \frac{|u(x)-u(y)|^{p}}{|x-y|^{N+sp}} \d x \d y - \int_\Omega f(x,u)u \d x,
\end{align}
for all $u \in \mathbb{X}_0^{s,p}(\Omega).$ We now define the Nehari manifold $\mathcal{N}$ and the set of sign-changing solutions $\mathcal{M}$ as follow:
$$\mathcal{N}=\{u\in \mathbb{X}_0^{s,p}(\Omega)\setminus \{0\} :\langle\Phi'(u),u\rangle=0\}$$ and
$$\mathcal{M}=\{u\in \mathbb{X}_0^{s,p}(\Omega):u^{\pm}\neq0, \langle\Phi'(u),u^+\rangle=\langle\Phi'(u),u^-\rangle=0\},$$
where $$u^+=max\{u(x),0\}=\frac{u+|u|}{2} \text{ and } u^-=min\{u(x),0\}=\frac{u-|u|}{2}.$$
 We set 
$$m_s=\inf_{u\in \mathcal{M}} \Phi(u)~\text{and}~c_s=\inf_{u\in \mathcal{N}} \Phi(u).$$
Note that $\mathcal{N}$ contains all the nontrivial (ground-state) solutions and $\mathcal{M}$ contains all the sign-changing solutions to the problem \eqref{MP}. Moreover, $ \mathcal{M}\subset \mathcal{N}$. Throughout the paper, we denote $C$ as a positive constant whose value may vary even in the same line. 

\section{Some Important Results}\label{sec3}
In this section, we review two essential critical point theorems\cite{LLW2015} in a metric space, which are useful for studying our problem in the subsequent sections of this paper. Let $(X,d)$ be a complete metric space with $Y_1, Y_2\subset X$ being open sets. Let $\Phi\in C^1(X,\mathbb R)$ and $a,b,c\in \mathbb R$. Before presenting the results, it is important to define the following notations. We denote $Z=Y_1 \cap Y_2$ and $W=Y_1\cup Y_2$ with $\sum=\partial Y_1 \cap \partial Y_2$. Moreover, $M, M_c, M([a,b])$ and $\Phi^c$ are defined as $M=\{u\in X: \Phi'(u)=0\},$ $M_c=\{u\in X:\Phi(u)=c, \Phi'(u)=0\},$ $M([a,b])=\{u\in X:a\leq \Phi(u)\leq b, \Phi'(u)=0\}$ and $\Phi^c=\{u\in X:\Phi(u)\leq c\},$ respectively. 
We now state the following definitions and theorems from \cite{LLW2015}.
\begin{definition}
    The set $\{Y_1, Y_2\}$ is called an admissible family of invariant sets with respect to $\Phi$ at the level $c$, if it satisfies the following criterion: If $M_c\setminus W=\emptyset$, then there exists $\epsilon_0$ such that, for every $\epsilon\in(0,\epsilon_0)$, there exists a continuous mapping $\sigma:X\rightarrow X$ satisfying
    \begin{itemize}
        \item[$(a)$] $\sigma(\overline{Y_1})\subset \overline{Y_1}$ and $\sigma(\overline{Y_2})\subset \overline{Y_2}$
        \item[$(b)$] $\sigma|_{\Phi^{c-\epsilon}}=I$, $I$ is the identity map,
        \item[$(c)$] $\sigma(\Phi^{c+\epsilon}\setminus W)\subset\Phi^{c-\epsilon}$.
    \end{itemize}
    \end{definition}
      \begin{definition}
We say $G: X\rightarrow X$ is an isometric involution if $G^2=I$ and $d(Gx, Gy)=d(x,y)$ for all $x,y\in ~X$, where $I$ denotes the identity mapping. Moreover, we say $E\subset X$ is symmetric if $Gu\in E$ for all $u\in E.$
     \end{definition}
    \begin{definition}
        Let $\Gamma=\{A\subset X: A~\text{is closed, symmetric and}~0\notin A\}$. The genus of $E\in\Gamma$, denoted by $\gamma (E)$, is the smallest positive integer $n$ such that there exists an odd and continuous map $h:E\rightarrow \mathbb{R}^n\setminus {0}$. If such mapping does not exist, then $\gamma(E)=\infty$. We denote $\gamma(\emptyset)=0$.
    \end{definition}
    \begin{definition}
    Let $(X,d)$ be a complete metric space and $\Phi\in C^1(X,\mathbb R)$.
       The set $\{Y_1,Y_2\}$ is called the $G$-admissible family of invariant sets with respect to $\Phi$ at level $c$, if it fulfills the following criterion: There exists $\epsilon_0>0$ and a symmetric neighborhood $N_c$ of $M_c\setminus W$ with $\gamma(\overline{N}_c)<+\infty$, $(N_c=\emptyset~\text{if}~M_c\setminus W=\emptyset)$ such that, for every $\epsilon\in(0,\epsilon_0)$, there exists a continuous mapping $\sigma:X\rightarrow X$ satisfying 
       \begin{itemize}
         \item[$(a)$] $\sigma(\overline{Y_1})\subset \overline{Y_1}$ and $\sigma(\overline{Y_2})\subset \overline{Y_2}$,
         \item[$(b)$] $\sigma \circ G=G \circ \sigma$,
        \item[$(c)$] $\sigma|_{\Phi^{c-2\epsilon}}=I$, $I$ is the identity mapping,
        \item[$(d)$] $\sigma({\Phi^{c+\epsilon}}\setminus(N_c\cup W))\subset\Phi^{c-\epsilon}$.
        \end{itemize}
        \end{definition}
        \begin{theorem}\label{T3.1}[Theorem 2.4,\cite{LLW2015}]
        Let $\Phi\in C^1(X,\mathbb{R})$, $Y_1$ and $Y_2$ be open subsets of $X$. Let $\{Y_1,Y_2\}$ be an admissible family of invariant sets with respect to $\Phi$ at level $c\geq c_*=\inf_{u\in \sum}\Phi(u)$ and there exists a continuous mapping $\psi:\Delta \rightarrow X$ such that 
        \begin{itemize}
            \item[$(a)$] $\psi(\partial_1\Delta)\subset Y_1$ and $\psi(\partial_2\Delta)\subset Y_2$,
            \item[$(b)$] $\psi(\partial_0\Delta)\cap Z=\emptyset$,
            \item[$(c)$] $\sup\limits_{u\in \psi (\partial_0\Delta)}\Phi(u)<c_*,$
        \end{itemize}
        where 
        \begin{align}
            \Delta=\{(t_1,t_2)\in\mathbb{R}^2:t_1,t_2\geq 0, t_1+t_2\leq 1\}, \nonumber\\
            \partial_0 \Delta=\{(t_1,t_2)\in\mathbb{R}^2:t_1,t_2\geq 0, t_1+t_2= 1\}, \nonumber \\
            \partial_1 \Delta=\{(t_1,t_2)\in\mathbb{R}^2:t_1= 0, 0\leq t_2\leq 1\}=\{0\}\cross\{[0,1]\}, \nonumber\\
            \partial_2 \Delta=\{(t_1,t_2)\in\mathbb{R}^2: 0\leq t_1\leq 1, t_2=0\}=\{[0,1]\} \cross \{0\}.\nonumber
        \end{align}
        For $\Gamma=\{\phi\in C(\Delta,X):\phi(\partial_1\Delta)\subset Y_1, \phi(\partial_2\Delta)\subset Y_2, \phi|_{\partial_0\Delta}=\psi|_{\partial_0\Delta}\}$, define,
        $$c_0=\inf_{\phi\in\Gamma}\sup_{u\in \phi(\Delta)\setminus W}\Phi(u).$$
        Then $c_0\geq c_*$ and $M_{c_0}\setminus W \neq \emptyset.$
\end{theorem}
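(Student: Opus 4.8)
The plan is to run the two-set minimax scheme of \cite{LLW2015}: first establish $c_0\ge c_*$ by a topological intersection argument, then, assuming $M_{c_0}\setminus W=\emptyset$, reach a contradiction by applying the deformation $\sigma$ from the admissibility hypothesis to a near-optimal map in $\Gamma$. Two preliminary observations: $\Gamma\neq\emptyset$ since $\psi$ itself lies in $\Gamma$ by hypothesis $(a)$, so $c_0$ is well defined; and $\Sigma=\partial Y_1\cap\partial Y_2$ is disjoint from $W=Y_1\cup Y_2$, because a point of $\partial Y_i$ cannot lie in the open set $Y_i$. The second observation is used in both steps.

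\emph{Lower bound.} I would prove that $\phi(\Delta)\cap\Sigma\neq\emptyset$ for every $\phi\in\Gamma$. Write $O=(0,0)$, $P=(1,0)$, $Q=(0,1)$ for the vertices of $\Delta$, so $\partial_1\Delta=[O,Q]$, $\partial_2\Delta=[O,P]$, $\partial_0\Delta=[P,Q]$; the conditions defining $\Gamma$ give $\phi(O)\in Y_1\cap Y_2=Z$, $\phi(\partial_1\Delta)\subset Y_1$, $\phi(\partial_2\Delta)\subset Y_2$, and $\phi(\partial_0\Delta)=\psi(\partial_0\Delta)$ disjoint from $Z$. Put
\[
F_O=\phi^{-1}\bigl(\overline{Z}\bigr),\qquad F_P=\phi^{-1}(X\setminus Y_1),\qquad F_Q=\phi^{-1}(X\setminus Y_2),
\]
all closed in $\Delta$. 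Using $Z=Y_1\cap Y_2\subset\overline{Z}$ and the conditions above, one verifies the Knaster--Kuratowski--Mazurkiewicz hypotheses for the vertices $O,P,Q$: each vertex lies in its own set (for $P$ and $Q$ because $\psi$ sends $\partial_0\Delta$ outside $Z$); $[P,Q]\subset F_P\cup F_Q$ because $\phi$ avoids $Z$ there; and $[O,P]\subset F_O\cup F_P$, $[O,Q]\subset F_O\cup F_Q$, $\Delta\subset F_O\cup F_P\cup F_Q$, because $\phi(x)\in Y_1\cap Y_2$ implies $\phi(x)\in\overline{Z}$. The KKM lemma produces $x^*\in F_O\cap F_P\cap F_Q$, whence $\phi(x^*)\in\overline{Z}\setminus(Y_1\cup Y_2)\subset(\overline{Y_1}\setminus Y_1)\cap(\overline{Y_2}\setminus Y_2)=\Sigma$. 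Since $\Sigma\subset X\setminus W$, the point $\phi(x^*)$ belongs to $\phi(\Delta)\setminus W$, so $\sup_{u\in\phi(\Delta)\setminus W}\Phi(u)\ge\Phi(\phi(x^*))\ge\inf_{\Sigma}\Phi=c_*$; taking the infimum over $\phi\in\Gamma$ gives $c_0\ge c_*$.

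\emph{Existence of a critical point.} Suppose $M_{c_0}\setminus W=\emptyset$. Admissibility of $\{Y_1,Y_2\}$ at the level $c_0\ge c_*$ provides $\epsilon_0>0$ such that for every $\epsilon\in(0,\epsilon_0)$ there is a continuous $\sigma:X\to X$ with properties $(a)$--$(c)$. Since $c_0\ge c_*>\sup_{\psi(\partial_0\Delta)}\Phi$ by hypothesis $(c)$ of the theorem, fix $\epsilon\in(0,\epsilon_0)$ with $\sup_{\psi(\partial_0\Delta)}\Phi<c_0-\epsilon$; then $\psi(\partial_0\Delta)\subset\Phi^{c_0-\epsilon}$, so $\sigma$ is the identity on $\psi(\partial_0\Delta)$ by $(b)$. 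By definition of $c_0$, choose $\phi\in\Gamma$ with $\phi(\Delta)\setminus W\subset\Phi^{c_0+\epsilon}\setminus W$, and set $\tilde\phi=\sigma\circ\phi$. Property $(a)$, in the form that $\sigma$ maps each $Y_i$ into itself, gives $\tilde\phi(\partial_i\Delta)\subset Y_i$, while $\sigma|_{\psi(\partial_0\Delta)}=I$ gives $\tilde\phi|_{\partial_0\Delta}=\psi|_{\partial_0\Delta}$; hence $\tilde\phi\in\Gamma$. Moreover, if $y=\sigma(\phi(x))\notin W$ then $\phi(x)\notin W$ as well (otherwise $\phi(x)\in Y_i$ would force $y\in Y_i\subset W$), so $\phi(x)\in\Phi^{c_0+\epsilon}\setminus W$ and property $(c)$ gives $\Phi(y)\le c_0-\epsilon$. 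Thus $\sup_{u\in\tilde\phi(\Delta)\setminus W}\Phi(u)\le c_0-\epsilon<c_0$, contradicting $c_0=\inf_{\phi\in\Gamma}\sup_{u\in\phi(\Delta)\setminus W}\Phi(u)$. Therefore $M_{c_0}\setminus W\neq\emptyset$.

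I expect the lower bound to be the main obstacle: $\phi(\Delta)\cap\Sigma\neq\emptyset$ genuinely exploits the two-dimensional linking geometry of $\{Y_1,Y_2\}$ and fails on $\partial\Delta$ in general — which is exactly why the domain of $\psi$ is taken to be a $2$-simplex — and phrasing it as a KKM statement avoids any Brouwer-degree computation in the possibly infinite-dimensional space $X$. The deformation step is routine once admissibility is in hand; its one delicate point is the bookkeeping with $Y_1$ and $Y_2$, namely the use of $\sigma(Y_i)\subset Y_i$ rather than merely $\sigma(\overline{Y_i})\subset\overline{Y_i}$, which is what makes both $\tilde\phi\in\Gamma$ and the final energy estimate work.
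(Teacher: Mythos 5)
The paper never proves Theorem \ref{T3.1}: it is imported verbatim from \cite{LLW2015} and used as a black box, so your argument has to stand on its own merits rather than be compared with an internal proof. Your lower-bound step does stand: the KKM covering $F_O=\phi^{-1}(\overline{Z})$, $F_P=\phi^{-1}(X\setminus Y_1)$, $F_Q=\phi^{-1}(X\setminus Y_2)$ is verified correctly (the vertex conditions indeed need both $\phi=\psi$ on $\partial_0\Delta$ and $\phi(\partial_i\Delta)\subset Y_i$, which you use), it produces a point of $\phi(\Delta)$ in $\Sigma=\partial Y_1\cap\partial Y_2$, and since $Y_1,Y_2$ are open, $\Sigma\cap W=\emptyset$, giving $c_0\ge c_*$; this is a clean, self-contained alternative to the degree/connectedness arguments usually invoked for this intersection property. (One line worth adding: $c_0<+\infty$ because $\psi\in\Gamma$ and $\Phi$ is bounded on the compact set $\psi(\Delta)$, so admissibility at the level $c_0\ge c_*$ is indeed applicable.)

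The genuine gap is in the deformation step, and you flag it without closing it: you invoke ``property $(a)$, in the form that $\sigma$ maps each $Y_i$ into itself,'' but the admissibility definition only supplies $\sigma(\overline{Y_i})\subset\overline{Y_i}$, and the weaker property fails you at both places where you use the stronger one. First, $\tilde\phi=\sigma\circ\phi$ need not belong to $\Gamma$: from $\phi(\partial_i\Delta)\subset Y_i$ you only get $\tilde\phi(\partial_i\Delta)\subset\overline{Y_i}$, while $\Gamma$ demands the open sets. Second, and more seriously, the estimate $\sup_{u\in\tilde\phi(\Delta)\setminus W}\Phi(u)\le c_0-\epsilon$ breaks down: if $\phi(x)\in Y_i$ but $\sigma(\phi(x))\in\partial Y_i\setminus W$, this image point contributes to the supremum, and nothing in $(a)$--$(c)$ bounds $\Phi$ there --- the near-optimality of $\phi$ controls $\Phi$ only on $\phi(\Delta)\setminus W$, and the definition does not assert $\Phi\circ\sigma\le\Phi$. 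Note that enlarging the minimax class to maps with $\phi(\partial_i\Delta)\subset\overline{Y_i}$ (your KKM argument still works for that class, since on $\partial_0\Delta$ it only uses $\phi=\psi$) repairs the first point but not the second, so the second is the truly missing ingredient. What you have proved is the theorem under the stronger invariance $\sigma(Y_i)\subset Y_i$ (hence $\sigma(W)\subset W$); to prove the statement as written you must either derive that stronger property or build it into the admissibility verification --- which is what the descending-flow construction of Section \ref{sec4} is designed to provide --- and a complete write-up should say this explicitly rather than attribute it to property $(a)$.
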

        \begin{theorem}\label{T3.2}[Theorem 2.5, \cite{LLW2015}]
            Let $\Phi\in C^1(X,\mathbb{R})$ is a G-invariant functional, $Y_1$ and $Y_2$ be open subsets of $X$. Assume that $\{Y_1, Y_2\}$ is G-admissible family of invariant sets with respect to $\Phi$ at level $c\geq c_*:=\inf_{u\in \sum}\Phi(u)$. Suppose for any $n\in \mathbb{N}$, there exists a continuous map $\phi_n:B_{2n}\rightarrow X$ such that 
            \begin{itemize}
            \item[$(a)$] $\phi_n(0)\in Z$, and $\phi_n(-t)=G\phi_n(t),~\forall\,t=(t_1,t_2)\in B_{2n}$, $t_1,t_2\in B_n$,
            \item[$(b)$] $\phi_n(\partial B_{2n})\cap Z=\emptyset$,
            \item[$(c)$] $c_0:=\sup_{u\in F_G\cup \phi_n(\partial B_{2n})}\Phi(u)<c_*=\inf_{u\in \sum}\Phi(u)$, where $B_{2n}=\{t\in \mathbb{R}^{2n}:|t|\leq 1\}$, $F_G=\{u\in X:Gu=u\}$.
        \end{itemize}
             For $j\in \mathbb{N}$, define 
             $$c_j=\inf_{B\in \Gamma_j}\sup_{u\in {B\setminus W}}\Phi(u),$$
             where $\Gamma_j=$ $\{B:B=\phi(B_{2n}\setminus P)$ for some $\phi\in G_n,~n\geq j, ~P\subset B_{2n}$ is open subset with $P=-P$ and $\gamma(\overline{P})\leq n-j\}$ and $G_n=$ $\{\phi:\phi\in C(B_{2n},X),~\phi(-t)=G\phi(t)$ for $t\in B_{2n}$, such that $\phi(0)\in Z$ and $\phi|_{\partial B_{2n}}=\phi_n|_{\partial B_{2n}}\}$. Then, for $j\geq 3$, $c_j\geq c_*$ and $M_{c_j}\setminus W\neq \emptyset$. Moreover, $c_j\rightarrow +\infty$ as $j\rightarrow +\infty.$
        \end{theorem}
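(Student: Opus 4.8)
Theorem \ref{T3.2} is the $\mathbb{Z}_2$-equivariant companion of Theorem \ref{T3.1} and is proved in \cite[Theorem 2.5]{LLW2015}; the plan is to reproduce the classical symmetric minimax scheme for $G$-equivariant functionals, with the usual pseudo-gradient deformation replaced throughout by the equivariant deformation map $\sigma$ supplied by the notion of a $G$-admissible family of invariant sets. First I would record the elementary structural facts: the family $\Gamma_j$ is nonempty for every $j$ (taking $P=\emptyset$, the map $\phi_n$ itself lies in $G_n$, so $\phi_n(B_{2n})\in\Gamma_j$ for all $n\ge j$), one has $\Gamma_{j+1}\subset\Gamma_j$ and hence the monotonicity $c_j\le c_{j+1}$, and $c_j<+\infty$ follows from continuity of $\phi_n$ (evaluate the infimum on $B=\phi_n(B_{2n})$).

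The genuinely topological point is the lower bound $c_j\ge c_*$ for $j\ge 3$, and I would obtain it from an intersection property: every $B=\phi(B_{2n}\setminus P)\in\Gamma_j$ satisfies $\big(B\setminus W\big)\cap\sum\neq\emptyset$. The argument is a Borsuk--Ulam/genus estimate. The map $\phi$ is odd (up to the $G$-action), it is normalized on $\partial B_{2n}$ by $\phi|_{\partial B_{2n}}=\phi_n|_{\partial B_{2n}}$ with $\phi_n(\partial B_{2n})$ lying strictly below $c_*$ and away from $Z$, and the admissible deficit $\gamma(\overline P)\le n-j$ together with monotonicity and subadditivity of the genus forces the symmetric set $\phi^{-1}(\sum)\setminus P$ to carry genus at least $n-(n-j)=j\ge 3$, in particular to be nonempty; this places a point of $B\setminus W$ on the linking set $\sum$, whence $\sup_{B\setminus W}\Phi\ge\inf_{\sum}\Phi=c_*$ and therefore $c_j\ge c_*$. (This also uses the boundary hypothesis (c) to guarantee that the part of $B$ coming from $\partial B_{2n}$ and from $F_G$ cannot interfere.)

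Next is the deformation step producing critical points. Fix $j\ge 3$ and suppose $M_{c_j}\setminus W=\emptyset$; then $N_{c_j}=\emptyset$, and $G$-admissibility at level $c_j$ gives, for all small $\epsilon>0$, a continuous equivariant $\sigma$ with $\sigma(\overline{Y_i})\subset\overline{Y_i}$, $\sigma\circ G=G\circ\sigma$, $\sigma|_{\Phi^{c_j-2\epsilon}}=I$, and $\sigma(\Phi^{c_j+\epsilon}\setminus W)\subset\Phi^{c_j-\epsilon}$. Shrinking $\epsilon$ so that $c_j-2\epsilon>\sup_{F_G\cup\phi_n(\partial B_{2n})}\Phi$ (possible since that supremum is $<c_*\le c_j$ by hypothesis (c)), the map $\sigma$ is the identity on $\phi_n(\partial B_{2n})$ and on $F_G$, so $\sigma\circ\phi\in G_n$ for every $\phi\in G_n$, and hence $\sigma(B)\in\Gamma_j$ whenever $B\in\Gamma_j$. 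Choosing $B\in\Gamma_j$ with $\sup_{B\setminus W}\Phi<c_j+\epsilon$, the inclusion $\sigma(\Phi^{c_j+\epsilon}\setminus W)\subset\Phi^{c_j-\epsilon}$ together with $\sigma(\overline{Y_i})\subset\overline{Y_i}$ yields $\sup_{\sigma(B)\setminus W}\Phi\le c_j-\epsilon<c_j$, contradicting the definition of $c_j$. Hence $M_{c_j}\setminus W\neq\emptyset$ for all $j\ge 3$. For the case where several consecutive $c_j$ coincide one keeps the finite-genus symmetric neighborhood $N_{c_j}$ in play: applying $\sigma$ to a near-optimal $B$ and discarding $N_{c_j}$ lowers the max below $c_j$ unless $\gamma(M_{c_j}\setminus W)$ is correspondingly large, which upgrades the conclusion to a multiplicity statement.

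Finally, for $c_j\to+\infty$: were the $c_j$ bounded, say $c_j\uparrow\bar c<\infty$, then either $c_j=\bar c$ for infinitely many $j$, in which case the multiplicity bookkeeping just described would force $\gamma(\overline{N_{\bar c}})=+\infty$, contradicting $\gamma(\overline{N_{\bar c}})<+\infty$; or the deformation associated with level $\bar c$ would let one push a near-optimal $B\in\Gamma_j$ strictly below $\bar c$ for large $j$, again contradicting $c_j\le\bar c$. Hence $c_j\to+\infty$. I expect the two delicate points to be (i) the genus/Borsuk--Ulam intersection estimate of the second paragraph that produces $c_j\ge c_*$, and (ii) the genus accounting needed to show that the deformed set $\sigma(B)\setminus(N_{c_j}\cup W)$ still belongs to $\Gamma_j$; once the equivariant deformation $\sigma$ is available from $G$-admissibility, the remaining steps are routine, and in the paper this theorem is simply invoked from \cite{LLW2015}.
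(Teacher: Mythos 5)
The paper does not prove this statement at all: it is quoted verbatim as Theorem 2.5 of \cite{LLW2015} and used as a black box, so there is no in-paper argument to compare against. Your outline is consistent with the proof in that reference — nonemptiness and monotonicity of the classes $\Gamma_j$, a genus/Borsuk--Ulam intersection estimate showing that every $B\in\Gamma_j$ with $j\geq 3$ meets $\partial Y_1\cap\partial Y_2$ outside $W$ (which yields $c_j\geq c_*$), and then the equivariant deformation furnished by $G$-admissibility, together with the finite-genus symmetric neighborhood $N_c$, to rule out $M_{c_j}\setminus W=\emptyset$ and to force $c_j\rightarrow+\infty$. The two points you flag yourself as delicate (the intersection estimate and the genus bookkeeping when several levels coincide or when discarding $N_{c_j}$) are exactly where the detailed work sits in \cite{LLW2015}, and your sketch leaves them at the level of plausibility rather than proof; but as a reconstruction of the strategy it is the same route the cited source takes, and the present paper itself only invokes the result.
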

    \noindent We conclude this section with the following lemma.
\begin{lemma}\label{inqlmn}[Lemma 3.6, \cite{BL2004}]
    For any $x_1$ and $x_2$ $\in \mathbb{R}^N$, there exist positive constants $d_1$, $d_2$, $d_3$ and $d_4$ such that 
    \begin{align}
        (|x_1|^{p-2}x_1-|x_2|^{p-2}x_2)(x_1-x_2) & \geq \begin{cases}
            & d_1(|x_1|+|x_2|)^{p-2}|x_1-x_2|^2,~\text{if}~p\in(1,2]\\
            & d_3|x_1-x_2|^p,~\text{if}~p\in (2,\infty),
        \end{cases}
    \end{align}
    and 
    \begin{align}
        \left \|x_1|^{p-2}x_1-|x_2|^{p-2}x_2 \right | \leq \begin{cases}
            & d_2|x_1-x_2|^{p-1},~\text{if}~p\in(1,2]\\
            & d_4 (|x_1|+|x_2|)^{p-2}|x_1-x_2|,\text{if}~p\in (2,\infty).
        \end{cases}
    \end{align}
  
\end{lemma}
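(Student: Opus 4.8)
The plan is to reduce both inequalities to elementary estimates for the $C^1$ vector field $\phi(x):=|x|^{p-2}x$ on $\mathbb{R}^N\setminus\{0\}$, whose Jacobian is
\[
 D\phi(x)=|x|^{p-2}\Big(\mathrm{Id}+(p-2)\frac{x\otimes x}{|x|^{2}}\Big).
\]
The matrix in brackets is symmetric with eigenvalue $p-1$ in the direction $x$ and eigenvalue $1$ on $x^{\perp}$, so for every $\eta\in\mathbb{R}^N$ and $x\neq0$,
\[
 D\phi(x)\eta\cdot\eta\ \ge\ \min\{1,p-1\}\,|x|^{p-2}|\eta|^{2},\qquad
 |D\phi(x)\eta|\ \le\ \max\{1,p-1\}\,|x|^{p-2}|\eta|.
\]
Writing $\xi(t)=x_2+t(x_1-x_2)$ and using that $t\mapsto\phi(\xi(t))$ is absolutely continuous on $[0,1]$ (its a.e.\ derivative $D\phi(\xi(t))(x_1-x_2)$ is dominated by the integrable function $C|\xi(t)|^{p-2}$, even when the segment meets the origin, since $p-2>-1$), the fundamental theorem of calculus gives
\[
 \phi(x_1)-\phi(x_2)=\int_0^1 D\phi(\xi(t))(x_1-x_2)\,\mathrm dt .
\]
Dotting with $x_1-x_2$ and inserting the two displayed bounds, all four estimates of the lemma are reduced to controlling the scalar quantity $\mathcal I:=\int_0^1|\xi(t)|^{p-2}\,\mathrm dt$ from above and below by appropriate powers of $|x_1-x_2|$ and $|x_1|+|x_2|$.

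For these I would use that $|\xi(t)|^{2}$ is a quadratic polynomial in $t$ with leading coefficient $|x_1-x_2|^{2}$, hence $|\xi(t)|^{2}=\rho^{2}+|x_1-x_2|^{2}(t-t_*)^{2}$ for suitable $\rho\ge0$ and $t_*\in\mathbb{R}$; in particular $|x_1-x_2|\,|t-t_*|\le|\xi(t)|\le|x_1|+|x_2|$ on $[0,1]$. When $p\in(1,2]$ one has $p-2\in(-1,0]$: from $|\xi(t)|\le|x_1|+|x_2|$ we get $\mathcal I\ge(|x_1|+|x_2|)^{p-2}$, which together with $\min\{1,p-1\}=p-1$ yields the first inequality with $d_1=p-1$; and from $|\xi(t)|\ge|x_1-x_2|\,|t-t_*|$ together with the uniform bound $\int_0^1|t-t_*|^{p-2}\,\mathrm dt\le\tfrac{2}{p-1}$ (finite because $p-2>-1$; the uniformity in $t_*$ follows since unit-length increments of the concave function $\tau\mapsto\tau^{p-1}$ decrease as one moves right) we get $\mathcal I\le\tfrac{2}{p-1}|x_1-x_2|^{p-2}$, hence the second inequality with $d_2=\tfrac{2}{p-1}$. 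When $p\in(2,\infty)$ one has $p-2>0$ and $\min\{1,p-1\}=1$: from $|\xi(t)|\le|x_1|+|x_2|$ we get $\mathcal I\le(|x_1|+|x_2|)^{p-2}$, giving the fourth inequality with $d_4=p-1$; for the lower bound note $|x_1-x_2|\le|x_1|+|x_2|\le 2\max\{|x_1|,|x_2|\}$, so (say) $|x_1|=|\xi(1)|\ge\tfrac12|x_1-x_2|$, and then the triangle inequality $|\xi(1)|\le|\xi(t)|+(1-t)|x_1-x_2|$ forces $|\xi(t)|\ge\tfrac14|x_1-x_2|$ for $t\in[\tfrac34,1]$, whence $\mathcal I\ge\tfrac14\big(\tfrac14|x_1-x_2|\big)^{p-2}=4^{1-p}|x_1-x_2|^{p-2}$ and the third inequality with $d_3=4^{1-p}$ (the alternative $|x_2|\ge\tfrac12|x_1-x_2|$ being symmetric, integrating over $[0,\tfrac14]$).

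The one genuinely delicate point is the lower bound for $\mathcal I$ when $p>2$: the full power $|x_1-x_2|^{p-2}$ must be recovered even though $|\xi(t)|$ may vanish somewhere on the segment, and this is exactly where the quadratic identity $|\xi(t)|^{2}=\rho^{2}+|x_1-x_2|^{2}(t-t_*)^{2}$ (equivalently, convexity of $t\mapsto|\xi(t)|$ together with $|\xi(1)-\xi(0)|=|x_1-x_2|$) is essential; the complementary upper estimate for $p\le2$ uses the same identity to tame the integrable singularity of $|\xi(t)|^{p-2}$. A minor technical remark is needed for configurations where the open segment contains the origin, where the integral representation is read off the absolute continuity of $t\mapsto\phi(\xi(t))$ (or, if one prefers, obtained by continuity from the generic case). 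Apart from that the argument is purely computational, and one may take the constants $d_1=p-1$, $d_2=\tfrac{2}{p-1}$, $d_3=4^{1-p}$, $d_4=p-1$.
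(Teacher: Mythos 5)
Your proposal is correct. Note, however, that the paper does not prove this lemma at all: it is quoted verbatim as Lemma 3.6 of Bartsch--Liu \cite{BL2004}, so there is no in-paper argument to compare against. What you supply is the standard self-contained route: write $\phi(x)=|x|^{p-2}x$, use the spectral bounds $\min\{1,p-1\}\,|x|^{p-2}\,\mathrm{Id}\le D\phi(x)\le \max\{1,p-1\}\,|x|^{p-2}\,\mathrm{Id}$ coming from the eigenvalues $1$ and $p-1$ of $\mathrm{Id}+(p-2)\,x\otimes x/|x|^{2}$, represent $\phi(x_1)-\phi(x_2)$ as $\int_0^1 D\phi(\xi(t))(x_1-x_2)\,\mathrm{d}t$ along the segment, and reduce everything to two-sided bounds on $\mathcal I=\int_0^1|\xi(t)|^{p-2}\,\mathrm{d}t$. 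Your handling of the two delicate points is sound: the identity $|\xi(t)|^2=\rho^2+|x_1-x_2|^2(t-t_*)^2$ gives both the integrable majorant $|x_1-x_2|^{p-2}|t-t_*|^{p-2}$ for $1<p\le 2$ (with the uniform bound $\int_0^1|t-t_*|^{p-2}\mathrm{d}t\le 2/(p-1)$ via concavity of $\tau\mapsto\tau^{p-1}$), and, for $p>2$, the observation $\max\{|x_1|,|x_2|\}\ge\tfrac12|x_1-x_2|$ plus the triangle inequality yields $|\xi(t)|\ge\tfrac14|x_1-x_2|$ on a quarter of the interval, whence $\mathcal I\ge 4^{1-p}|x_1-x_2|^{p-2}$; the absolute-continuity remark covers segments through the origin since $p-2>-1$. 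The payoff of your argument over the bare citation is explicit constants $d_1=p-1$, $d_2=2/(p-1)$, $d_3=4^{1-p}$, $d_4=p-1$, which is more than the paper needs but certainly not wrong.
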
  

 \section{Existence of Sign-Changing solutions}\label{sec4}
 This section is devoted to establishing one of our main theorem, (Theorem \ref{T2.3}). We employ the methods of descending flows to obtain the existence of a sign-changing (or nodal) solution. The existence of solutions is proved by constructing a modified problem. Recall that from $(f_1), (f_2)$ and $(f_3)$, we can deduce that there exists a constant $\beta>0$ such that
$$f(x,u)u+\beta |u|^p>0,~\forall\,x\in \bar{\Omega}~\text{and}~\forall\,u\neq 0.$$
For each $u\in \mathbb{X}_0^{s,p}(\Omega)$, we define a new norm $\|.\|_\beta$, which is equivalent to $\|.\|_{\mathbb{X}_0^{s,p}(\Omega)}$ and is given by
\begin{align}\label{4norm}
    \|u\|_\beta=\bigg(\int_\Omega|\nabla u|^p \d x+{\int_{\mathbb{R}^N} \int_{\mathbb{R}^N}} \frac{|u(x)-u(y)|^p}{|x-y|^{N+sp}}\d x\d y +\beta\int_\Omega |u|^p \d x\bigg)^\frac{1}{p}.
\end{align}
Let us consider the following problem
\begin{align}\label{NP}
    -\Delta_p w+(-\Delta_p)^s w +\beta|w|^{p-2} w &= g(x,u)~\text{in}~\Omega,\\
    w&=0~\text{in}~\mathbb R^N\setminus \Omega,\nonumber
\end{align}
where $g(x,u)=f(x,u)+\beta|u|^{p-2}u$. The weak formulation of the problem \eqref{NP} is 
\begin{align}\label{WFNP}
    \int_\Omega |\nabla w|^{p-2} \nabla w\cdot\nabla \phi \d x+{\int_{\mathbb{R}^N} \int_{\mathbb{R}^N}} \frac{|w(x)-w(y)|^{p-2}(w(x)-w(y))(\phi(x)-\phi(y))}{|x-y|^{N+sp}} \d x \d y \nonumber \\ 
    +\beta \int_\Omega |w|^{p-2} w \phi \d x= \int_\Omega g(x,u)\phi \d x,~\forall\,\phi \in \mathbb{X}_0^{s,p}(\Omega).
\end{align}
We define an operator $A:\mathbb{X}_0^{s,p}(\Omega)\rightarrow \mathbb{X}_0^{s,p}(\Omega)$ as follows. For each $u\in \mathbb{X}_0^{s,p}(\Omega)$, there exists a unique weak solution $w:=A(u)\in \mathbb{X}_0^{s,p}(\Omega)$ to the problem \eqref{NP}. Therefore, weak solutions to the problem \eqref{MP} are fixed points of the operator $A$. We now focus on characterizing the operator $A$. Prior to proceed with the results, let us define the following notations which will be used in the remaining part of the paper.
\begin{itemize}
	\item[$(a)$] $P^+=\{{u\in \mathbb{X}_0^{s,p}(\Omega):u\geq0}\}$ and $P^-=\{{u\in \mathbb{X}_0^{s,p}(\Omega):u\leq 0}\}$
	\item[$(b)$] $P_\epsilon^+=\{u\in \mathbb{X}_0^{s,p}(\Omega):\dist_\beta(u,P^+)<\epsilon\}$ and $P_\epsilon^-=\{u\in \mathbb{X}_0^{s,p}(\Omega):\dist_\beta(u,P^-)<\epsilon\}$,
\end{itemize}
where  $\dist(.,.)$ and $\dist_\beta(.,.)$ denote the distance in $\mathbb{X}_0^{s,p}(\Omega)$ with respect to the norm $\|.\|_{\mathbb{X}_0^{s,p}(\Omega)}$ and $\|.\|_\beta$, respectively. With this, we begin with the following lemma proving that $ A$ is well-defined, continuous and compact.
\begin{lemma}\label{lmn4.1}
    The operator $A$ is well-defined, continuous, and compact.
\end{lemma}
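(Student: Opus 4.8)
The plan is to prove that the solution operator $A$ for the auxiliary problem \eqref{NP} is well-defined, continuous and compact by combining a standard monotonicity/coercivity argument (to get existence and uniqueness of the weak solution) with the compactness of the Sobolev embeddings from Theorem \ref{thm cpt}.

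First I would establish that $A$ is \emph{well-defined}. Fix $u\in\mathbb{X}_0^{s,p}(\Omega)$. By $(f_1)$ and $(f_2)$ the function $g(x,u)=f(x,u)+\beta|u|^{p-2}u$ satisfies a growth bound $|g(x,u)|\le C(1+|u|^{q-1})$ with $q\in(p,p^*)$, so by Theorem \ref{thm cpt} the map $\phi\mapsto\int_\Omega g(x,u)\phi\d x$ is a bounded linear functional on $\mathbb{X}_0^{s,p}(\Omega)$. The operator on the left-hand side of \eqref{WFNP}, call it $\mathcal{L}_\beta:\mathbb{X}_0^{s,p}(\Omega)\to(\mathbb{X}_0^{s,p}(\Omega))^*$, is the Gâteaux derivative of the convex $C^1$ functional $w\mapsto\frac1p\|w\|_\beta^p$; using Lemma \ref{inqlmn} (applied both to the gradient terms in $\mathbb{R}^N$ and to the difference quotients in the Gagliardo term) one shows $\mathcal{L}_\beta$ is strictly monotone, coercive (since $\langle\mathcal{L}_\beta w,w\rangle=\|w\|_\beta^p$), and continuous (hence hemicontinuous). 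By the Browder--Minty theorem $\mathcal{L}_\beta$ is a bijection onto the dual, so \eqref{WFNP} has a unique solution $w=A(u)$; strict monotonicity gives uniqueness, so $A$ is single-valued.

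Next I would prove \emph{continuity}. Suppose $u_n\to u$ in $\mathbb{X}_0^{s,p}(\Omega)$ and set $w_n=A(u_n)$, $w=A(u)$. Testing \eqref{WFNP} for $w_n$ with $\phi=w_n$ and using $\langle\mathcal{L}_\beta w_n,w_n\rangle=\|w_n\|_\beta^p$ together with the growth bound on $g$ and Theorem \ref{thm cpt} gives a uniform bound $\|w_n\|_\beta\le C$; by reflexivity (Theorem \ref{thm prop}) a subsequence satisfies $w_n\rightharpoonup \bar w$ weakly and, by compactness of $\mathbb{X}_0^{s,p}(\Omega)\hookrightarrow L^q(\Omega)$, strongly in $L^q(\Omega)$ for $q<p^*$. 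Since $u_n\to u$ strongly, dominated convergence (with the growth bound as majorant) yields $g(\cdot,u_n)\to g(\cdot,u)$ in the appropriate dual sense, so passing to the limit in \eqref{WFNP} and invoking the $(S_+)$-property of $\mathcal{L}_\beta$ — which again follows from Lemma \ref{inqlmn}: testing the difference of the equations for $w_n$ and $\bar w$ with $w_n-\bar w$ forces $\|w_n-\bar w\|_\beta\to0$ — gives $w_n\to\bar w$ strongly, and uniqueness of the limiting equation identifies $\bar w=A(u)$. A standard subsequence argument then shows the whole sequence converges, so $A$ is continuous. The same estimate $\|A(u)\|_\beta\le C(\|u\|_\beta)$ on bounded sets, combined with the compact embedding applied to the \emph{image}, together with the $(S_+)$-argument converting weak convergence of $\{A(u_n)\}$ into strong convergence, gives \emph{compactness}: any bounded sequence $\{u_n\}$ has $\{A(u_n)\}$ bounded, hence (up to subsequence) weakly convergent, hence strongly convergent in $L^q$, hence — by $(S_+)$ — strongly convergent in $\mathbb{X}_0^{s,p}(\Omega)$.

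The main obstacle is the $(S_+)$-type argument when $p<2$: there Lemma \ref{inqlmn} only gives $(|x_1|^{p-2}x_1-|x_2|^{p-2}x_2)\cdot(x_1-x_2)\ge d_1(|x_1|+|x_2|)^{p-2}|x_1-x_2|^2$, so one cannot directly bound $\|w_n-\bar w\|_\beta^p$ from below by the tested difference. The standard remedy, which I would carry out, is a Hölder trick: write $|w_n-\bar w|^p = \big((|w_n|+|\bar w|)^{(p-2)/2}|w_n-\bar w|\big)^p (|w_n|+|\bar w|)^{p(2-p)/2}$ and apply Hölder with exponents $2/p$ and $2/(2-p)$, so that the integral of $|w_n-\bar w|^p$ (over $\mathbb{R}^N$ for the gradient term and over $\Omega\times\Omega$ against the kernel for the Gagliardo term) is controlled by $\big(\text{tested difference}\big)^{p/2}$ times a bounded factor; since the tested difference tends to zero, so does $\|w_n-\bar w\|_\beta$. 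For $p\ge2$ the inequality $d_3|x_1-x_2|^p$ makes this immediate. This dichotomy in $p$ is exactly the point flagged in the introduction about needing extra care when $p<2$, and handling it cleanly is the crux of the lemma.
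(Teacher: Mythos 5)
Your proposal is correct, and on the hard analytic points (the uniform bound on $A(u_n)$, testing the difference with $w_n-\bar w$, and the H\"older trick $\int|w_n-\bar w|^p\le\big(\int(|w_n|+|\bar w|)^{p-2}|w_n-\bar w|^2\big)^{p/2}\big(\int(|w_n|+|\bar w|)^p\big)^{(2-p)/2}$ for $1<p<2$) it uses exactly the same estimates, via Lemma \ref{inqlmn}, that the paper uses. The route differs in two places. For well-definedness the paper does not invoke Browder--Minty: it minimizes the associated functional $J(w)=\frac1p\|w\|_\beta^p-\int_\Omega g(x,u)w\,\d x$ (coercive, weakly lower semicontinuous, bounded below) and then proves uniqueness of the minimizer by testing the difference of the two Euler--Lagrange equations with $w_1-w_2$ and applying Lemma \ref{inqlmn}; your monotone-operator argument is shorter and equally valid, while the paper's direct method stays more elementary. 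For continuity the paper avoids weak limits and the $(S_+)$ detour altogether: since the candidate limit $w=A(u)$ is already known, it tests the difference of the equations for $w_n$ and $w$ with $w_n-w$, splits $g$ into a $|t|^{p-1}$-growth part and a $|t|^{q-1}$-growth part via a cutoff, and obtains the quantitative bounds $\|w_n-w\|_\beta^{p-1}$ (resp. $\|w_n-w\|_\beta$) $\le C\big(\|\phi_{g,1}(u_n)-\phi_{g,1}(u)\|_{p/(p-1)}+\|\phi_{g,2}(u_n)-\phi_{g,2}(u)\|_{q/(q-1)}\big)$, which give convergence of the whole sequence without subsequence extraction; your version needs the final subsequence-uniqueness step, and you should phrase the $(S_+)$ step carefully, since $\bar w$ satisfies no equation a priori — what one actually uses is $\langle\mathcal{L}_\beta w_n,w_n-\bar w\rangle=\int_\Omega g(x,u_n)(w_n-\bar w)\,\d x\to0$ together with $\langle\mathcal{L}_\beta\bar w,w_n-\bar w\rangle\to0$ from weak convergence, which is enough and is essentially how the paper's compactness step (Claim III) proceeds as well.
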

    \begin{proof}
    We divide the proof into three parts.\\
        \textbf{Claim I}: The operator $A$ is well-defined.\\
        {\it Proof of the Claim I:} For any $u\in \mathbb{X}_0^{s,p}(\Omega)$, we define 
        $$J(w)=\frac{1}{p} \int_\Omega|\nabla w|^p \d x+\frac{1}{p} {\int_{\mathbb{R}^N} \int_{\mathbb{R}^N}}\frac{|w(x)-w(y)|^p}{|x-y|^{N+sp}}\d x\d y +\frac{\beta}{p}\int_\Omega|w|^p-\int_\Omega g(x,u)w \d x.$$
   
    Clearly, $J$ is $C^1$ and 
    \begin{align}\label{Der4.4}
        \langle J'(w),\phi\rangle= \int_\Omega |\nabla w|^{p-2} \nabla w \cdot \nabla \phi \d x
        +\beta \int_\Omega |w|^{p-2} w \phi \d x - \int_\Omega g(x,u)\phi \d x \nonumber \\ 
    +{\int_{\mathbb{R}^N} \int_{\mathbb{R}^N}} \frac{|w(x)-w(y)|^{p-2}(w(x)-w(y))(\phi(x)-\phi(y))}{|x-y|^{N+sp}} \d x \d y ,~\forall\,\phi \in \mathbb{X}_0^{s,p}(\Omega).
    \end{align}
    Moreover, $J$ is weakly lower semi-continuous, bounded from below and coercive. Therefore, there exists a $w^*\in \mathbb{X}_0^{s,p}(\Omega)$ which is a minimizer of $J$. Next, we prove that the minimizer $w^*$ of $J$ is unique. Let $w_1$  and $w_2$ be two minimizers of $J$. Setting $\bar{w}_i(x,y)=w_i(x)-w_i(y)$, for $i=1,2$, and taking $\phi(x)={w}_1(x)-{w}_2(x)$ in \eqref{Der4.4}, we obtain
     \begin{align}\label{eq4.5}
     0 =&\int_\Omega (|\nabla w_1|^{p-2} \nabla w_1-|\nabla w_2|^{p-2} \nabla w_2)\cdot\nabla ({w}_1(x)-{w}_2(x))\d x \nonumber\\
     +&{\int_{\mathbb{R}^N} \int_{\mathbb{R}^N}} \left[ \frac{|\bar{w}_1(x,y)|^{p-2}\bar{w}_1(x,y)}{|x-y|^{N+sp}} - \frac{|\bar{w}_2(x,y)|^{p-2}\bar{w}_2(x,y)}{|x-y|^{N+sp}}\right](\bar{w}_1(x,y)-\bar{w}_2(x,y))\d x \d y\nonumber \\ 
    +&\beta \int_\Omega (|w_1|^{p-2} w_1-|w_2|^{p-2} w_2)(w_1-w_2)\d x.
     \end{align}
On using Lemma \ref{inqlmn} in \eqref{eq4.5} for $p>2$, we get
    \begin{align}\label{eq4.7}
        0 &\geq d_3 \bigg(\int_\Omega|\nabla (w_1-w_2)|^p \d x+{\int_{\mathbb{R}^N} \int_{\mathbb{R}^N}} \frac{|\bar{w}_1-\bar{w}_2|^p}{|x-y|^{N+sp}}\d x\d y +\beta\int_\Omega |w_1-w_2|^p \d x\bigg) \nonumber \\
        & = d_3 \|w_1-w_2\|_\beta^p.
    \end{align}
On the other hand, for $p\in(1,2]$, we use Lemma \ref{inqlmn} and reverse H\"older inequality in \eqref{eq4.5} to obtain
     \begin{align}\label{eq4.6}
         0  \geq & d_1 \bigg[ \int_\Omega |\nabla w_1-\nabla w_2|^2 (|\nabla w_1|+|\nabla w_2|)^{p-2} \d x\nonumber \\
         &+ {\int_{\mathbb{R}^N} \int_{\mathbb{R}^N}} \frac{|\bar{w}_1(x,y)- \bar{w}_2(x,y)|^2(|\bar{w}_1(x,y)|+|\bar{w}_2(x,y)|)^{p-2}}{|x-y|^{N+sp}} \d x \d y \nonumber \\
         & +\beta\int_\Omega |w_1-w_2|^2(|w_1|+|w_2|)^{p-2})\d x \bigg] \nonumber \\
         \geq & C_1 \bigg[ \left(\int_\Omega |\nabla w_1-\nabla w_2|^p \d x\right)^\frac{2}{p} \left(\int_\Omega (|\nabla w_1|+|\nabla w_2|)^p \d x\right)^\frac{p-2}{p} \nonumber\\
         &+ \bigg\{{\left( {\int_{\mathbb{R}^N} \int_{\mathbb{R}^N}}\frac{|\bar{w}_1(x,y)-\bar{w}_2(x,y)|^p}{|x-y|^{N+sp}}\d x \d y\right)^\frac{2}{p}}\nonumber \\ 
         & \hspace{0.5cm} \times {\left({\int_{\mathbb{R}^N} \int_{\mathbb{R}^N}} \frac{(|\bar{w}_1(x,y)|+|\bar{w}_2(x,y)|)^{p}}{|x-y|^{N+sp}}\d x \d y\right)^\frac{p-2}{p}} \bigg\} \nonumber \\
        &+ \beta \left( \int_\Omega |w_1-w_2|^p \d x \right)^\frac{2}{p} \left(\int_\Omega (|w_1|+|w_2|)^p \d x \right)^\frac{p-2}{p}\bigg]\nonumber \\
         = &C_1\bigg[\|\nabla(w_1-w_2)\|_p^2\left(\int_\Omega (|\nabla w_1|+|\nabla w_2|)^p \d x\right)^\frac{p-2}{p}\nonumber\\
        &+[w_1-w_2]_{s,p}^2\left({\int_{\mathbb{R}^N} \int_{\mathbb{R}^N}}\frac{(|\bar{w}_1(x,y)|+|\bar{w}_2(x,y)|)^{p}}{|x-y|^{N+sp}} \d x\right)^\frac{p-2}{p} \nonumber \\
        &+ \beta \|w_1-w_2\|_p^2 \left(\int_\Omega (|w_1|+|w_2|)^p \d x\right)^\frac{p-2}{p}\bigg] \geq C_2 \|w_1-w_2\|_\beta^2, 
     \end{align}
     
    where $C_1$ and $C_2$ are positive constants. Hence, from \eqref{eq4.7} and \eqref{eq4.6}, we get 
    $$\|w_1-w_2\|_\beta=0,$$ 
    implying that $w_1=w_2$. Therefore, we have a unique minimizer for $J$, that is, for a given $u\in \mathbb{X}_0^{s,p}(\Omega)$, the minimization problem $$m=\inf_{w\in \mathbb{X}_0^{s,p}(\Omega)}J(w)=J(w^*)$$ is well-defined with a unique minimizer. Thus, the operator $A$ is well-defined.\\
     \textbf{Claim II}: The operator $A$ is continuous.\\
        {\it Proof of the Claim II:} Let $(u_n)\subset \mathbb{X}_0^{s,p}(\Omega)$ be a sequence such that $u_n\rightarrow u\in \mathbb{X}_0^{s,p}(\Omega)$ strongly. Therefore, from Theorem \ref{thm cpt}, we have $u_n\rightarrow u$ strongly in $L^r(\Omega)$ for $r\in[1,p^*)$ and $u_n\rightarrow u$ $a.e.$ in $ \Omega$. Moreover, $(u_n)$ is uniformly bounded in $\mathbb{X}_0^{s,p}(\Omega)$. Let $A(u_n) = w_n$. On choosing $\phi = w_n$ in \eqref{WFNP}, we get
     \begin{align}\label{eq4.8}
         \|w_n\|^p_\beta=\int_\Omega g(x,u_n)w_n \d x.
     \end{align}
     From conditions $(f_1)$, $(f_2)$ and $(f_3)$, we conclude that $(w_n)$ is uniformly bounded in $ \mathbb{X}_0^{s,p}(\Omega)$, that is, there exists a constant $M>0$ such that $\|w_n\|_\beta\leq M$ for all $n\in\mathbb{N}$. Let us denote $\bar{w}_n(x,y)=w_n(x)-w_n(y)$ and $\bar{w}(x,y)=w(x)-w(y)$. On choosing $\phi=w_n-w$ as the test function in \eqref{WFNP} and using Lemma \ref{inqlmn} for $p>2$, we obtain
     \begin{align}\label{eq4.9}
         &\|w_n-w\|_\beta^p = \int_\Omega|\nabla (w_n-w)|^p \d x \nonumber \\
         &+{\int_{\mathbb{R}^N} \int_{\mathbb{R}^N}} \frac{|(w_n-w)(x)-(w_n-w)(y)|^p}{|x-y|^{N+sp}}\d x\d y +\beta\int_\Omega |w_n-w|^p \d x \nonumber \\
        \leq & C \bigg[ \int_\Omega (|\nabla w_n|^{p-2} \nabla w_n-|\nabla w|^{p-2} \nabla w)\cdot \nabla ({w}_n(x)-{w}(x))\d x \nonumber\\
      &+ {\int_{\mathbb{R}^N} \int_{\mathbb{R}^N}} \left( \frac{|\bar{w}_n(x,y)|^{p-2}\bar{w}_n(x,y)}{|x-y|^{N+sp}} - \frac{|\bar{w}(x,y)|^{p-2}\bar{w}(x,y)}{|x-y|^{N+sp}}\right)(\bar{w}_n(x,y)-\bar{w}(x,y))\d x \d y\nonumber \\ 
   & + \beta \int_\Omega (|w_n|^{p-2} w_n-|w|^{p-2} w)(w_n-w) \d x\bigg]\nonumber\\
   =&  C\int_\Omega (g(x,u_n)-g(x,u))(w_n-w) \d x= C I,
     \end{align}
     where $$I=\int_\Omega (g(x,u_n)-g(x,u))(w_n-w) \d x.$$
    For $p\in (1,2]$, again using Lemma \ref{inqlmn}, the reverse H\"older inequality and putting $\phi=w_n-w$ in \eqref{WFNP}, we obtain
     \begin{align}\label{eq4.10}
         I = & \int_\Omega (|\nabla w_n|^{p-2} \nabla w_n-|\nabla w|^{p-2} \nabla w)\cdot\nabla ({w}_n(x)-{w}(x))\d x \nonumber\\
     & + {\int_{\mathbb{R}^N} \int_{\mathbb{R}^N}}\left( \frac{|\bar{w}_n(x,y)|^{p-2}\bar{w}_n(x,y)}{|x-y|^{N+sp}} - \frac{|\bar{w}(x,y)|^{p-2}\bar{w}(x,y)}{|x-y|^{N+sp}}\right)(\bar{w}_n(x,y)-\bar{w}(x,y))\d x \d y\nonumber \\ 
   & + \beta \int_\Omega (|w_n|^{p-2} w_n-|w|^{p-2} w)(w_n-w) \d x \nonumber \\
   \geq & C \bigg[ \int_\Omega |\nabla w_n-\nabla w|^2 (|\nabla w_n|+|\nabla w|)^{p-2} \d x \nonumber \\
         &+ {\int_{\mathbb{R}^N} \int_{\mathbb{R}^N}} \frac{|\bar{w}_n(x,y)-\bar{w}(x,y)|^2(|\bar{w}_n(x,y)|+|\bar{w}(x,y)|)^{p-2}}{|x-y|^{N+sp}}\d x \d y \nonumber \\
         & +\beta\int_\Omega |w_n-w|^2(|w_n|+|w|)^{p-2})\d x \bigg] \nonumber \\
     \geq & C \bigg[ \left(\int_\Omega |\nabla w_n-\nabla w|^p\d x \right)^\frac{2}{p} \left(\int_\Omega (|\nabla w_n|+|\nabla w|)^p\d x  \right)^\frac{p-2}{p} \nonumber\\
        &+\bigg\{ \left( {\int_{\mathbb{R}^N} \int_{\mathbb{R}^N}}\frac{|\bar{w}_n(x,y)-\bar{w}(x,y)|^p}{|x-y|^{N+sp}}\d x \d y\right)^\frac{2}{p}\nonumber\\
        & \hspace{0.5cm} \times \left({\int_{\mathbb{R}^N} \int_{\mathbb{R}^N}} \frac{(|\bar{w}_n(x,y)|+|\bar{w}(x,y)|)^{p}}{|x-y|^{N+sp}}\d x \d y\right)^\frac{p-2}{p}\bigg\} \nonumber \\
        &+ \beta \left( \int_\Omega |w_n-w|^p\d x  \right)^\frac{2}{p} \left(\int_\Omega (|w_n|+|w|)^p \d x \right)^\frac{p-2}{p}\bigg]\nonumber \\
     = & C\bigg[\|\nabla(w_n-w)\|_p^2\left(\int_\Omega (|\nabla w_n|+|\nabla w|)^p \d x \right)^\frac{p-2}{p}\nonumber\\
        &+[w_n-w]_{s,p}^2\left({\int_{\mathbb{R}^N} \int_{\mathbb{R}^N}} \frac{(|\bar{w}_n(x,y)|+|\bar{w}(x,y)|)^{p}}{|x-y|^{N+sp}}\d x \d y\right)^\frac{p-2}{p} \nonumber \\
        &+ \beta \|w_n-w\|_p^2 \left(\int_\Omega (|w_n|+|w|)^p\d x \right)^\frac{p-2}{p}\bigg] \geq C \|w_n-w\|_\beta^2.
     \end{align}
     Now, let $R\geq r$ and define $h\in C_c^\infty(\mathbb{R})$ such that $0\leq h(t)\leq 1$, $\forall\, t \in \mathbb{R}$ and
     \begin{align}\nonumber
         h(t)=\begin{cases}
              & 1,~ \text{if}~ |t|\leq R,\\
                  & 0,~ \text{if}~ |t|\geq R+1. 
                  \end{cases}
       \end{align}
       We set $\mathcal{\phi}_{g,1}(t)=h(t)g(x,t)$ and $\mathcal{\phi}_{g,2}(t)=(1-h(t))g(x,t)$, $\forall \,t\in \mathbb{R}$. From the conditions $(f_1)$ and $(f_2)$, we conclude that there exists $C>0$ such that
       $$|\mathcal{\phi}_{g,1}(t)|\leq C |t|^{p-1}~\text{and}~|\mathcal{\phi}_{g,2}(t)|\leq C |t|^{q-1},~\forall \,t\in \mathbb{R}.$$ 
       Therefore, using the H\"older inequality and then using the Sobolev inequality, we deduce that
       \begin{align}\label{eq4.11}
           I&=\int_\Omega (g(x,u_n)-g(x,u))(w_n-w) \d x\nonumber \\
           &=\int_\Omega [(\mathcal{\phi}_{g,1}(u_n)-\mathcal{\phi}_{g,1}(u))+(\mathcal{\phi}_{g,2}(u_n)-\mathcal{\phi}_{g,2}(u))](w_n-w) \d x\nonumber \\
           &\leq \|\mathcal{\phi}_{g,1}(u_n)-\mathcal{\phi}_{g,1}(u)\|_\frac{p}{p-1}\|w_n-w\|_p + \|\mathcal{\phi}_{g,2}(u_n)-\mathcal{\phi}_{g,2}(u)\|_\frac{q}{q-1}\|w_n-w\|_q \nonumber\\
           & \leq C \Bigg[\|\mathcal{\phi}_{g,1}(u_n)-\mathcal{\phi}_{g,1}(u)\|_\frac{p}{p-1}+\|\mathcal{\phi}_{g,2}(u_n)-\mathcal{\phi}_{g,2}(u)\|_\frac{q}{q-1}\bigg]\|w_n-w\|_\beta. 
       \end{align}
       Hence, for $p>2$, we use \eqref{eq4.9} and \eqref{eq4.11} to obtain
       \begin{align}\label{eq4.12}
           \|w_n-w\|_\beta^{p-1}\leq C \bigg[ \|\mathcal{\phi}_{g,1}(u_n)-\mathcal{\phi}_{g,1}(u)\|_\frac{p}{p-1}+\|\mathcal{\phi}_{g,2}(u_n)-\mathcal{\phi}_{g,2}(u)\|_\frac{q}{q-1}\bigg ]
       \end{align}
       and if $1<p\leq 2$, then using \eqref{eq4.10} and \eqref{eq4.11}, we obtain,
       \begin{align}\label{eq4.12A}
           \|w_n-w\|_\beta^{}\leq C \bigg[ \|\mathcal{\phi}_{g,1}(u_n)-\mathcal{\phi}_{g,1}(u)\|_\frac{p}{p-1}+\|\mathcal{\phi}_{g,2}(u_n)-\mathcal{\phi}_{g,2}(u)\|_\frac{q}{q-1}\bigg ].
       \end{align}
       We now apply the Lebesgue-dominated convergence theorem in the inequalities \eqref{eq4.12} and \eqref{eq4.12A}  to obtain $\|w_n-w\|_\beta\rightarrow 0$ as $n\rightarrow\infty,$ concluding the continuity of the operator $A$.\\
       \textbf{Claim III}: The operator $A$ is compact.\\
        {\it Proof of the Claim III:} Let $(u_n)\subset \mathbb{X}_0^{s,p}(\Omega)$ be uniformly bounded and $w_n=A(u_n)$. Following the arguments as in the proof of {\textit{Claim II}}, we assert that $(w_n)$ is uniformly bounded in $\mathbb{X}_0^{s,p}(\Omega)$, that is, there exists $M>0$ such that $\|w_n\|_\beta\leq M$ for all $n\in\mathbb{N}$. Therefore, there exist subsequences of $(u_n)$ and $(w_n)$, still denoted by $(u_n)$ and $(w_n)$, respectively such that $u_n \rightharpoonup \bar{u}\in \mathbb{X}_0^{s,p}(\Omega)$ and $w_n \rightharpoonup \bar{w}\in \mathbb{X}_0^{s,p}(\Omega)$ weakly in $\mathbb{X}_0^{s,p}(\Omega).$ Thus from Theorem \ref{thm cpt}, we get $u_n\rightarrow\bar{u}$; $w_n\rightarrow\bar{w}$ strongly in $L^r(\Omega)$ for $r\in [1,p^*)$, $u_n\rightarrow \bar{u}$; $w_n \rightarrow \bar{w}$ $a.e.$ in $\Omega.$ Again, repeating the arguments of the proof of {\textit{Claim II}}, we deduce that $\|w_n - \bar{w}\|_\beta \rightarrow 0 $ as $ n \rightarrow \infty $. This completes the proof.
    \end{proof}
\begin{lemma}\label{lmn4.2}
     There exist two positive constants $a_1$ and $a_2$ such that
    \begin{align}\label{eq4.13L}
    \langle \Phi'(u),u-A(u) \rangle \geq\begin{cases}
         & a_1 \|u-A(u)\|^p_{\mathbb{X}_0^{s,p}(\Omega)},~\text{if}~p>2 \\
         & a_2\dfrac{\|u-A(u)\|^2_{\mathbb{X}_0^{s,p}(\Omega)}}{\left(\|u\|_{\mathbb{X}_0^{s,p}(\Omega)}+\|A(u)\|_{\mathbb{X}_0^{s,p}(\Omega)}\right)^{2-p}}, ~\text{if}~1<p\leq2.
    \end{cases}
    \end{align}
\end{lemma}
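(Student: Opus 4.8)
The plan is to first express $\langle\Phi'(u),u-A(u)\rangle$ as a sum of three monotonicity-type differences and then apply the elementary inequality of Lemma~\ref{inqlmn}, exactly as in the proof of Lemma~\ref{lmn4.1}. Write $w=A(u)$ and set $\bar u(x,y)=u(x)-u(y)$, $\bar w(x,y)=w(x)-w(y)$. Testing \eqref{eq2.20} with $\phi=u-w$ and using $f(x,u)=g(x,u)-\beta|u|^{p-2}u$, the term $-\int_\Omega f(x,u)(u-w)\d x$ becomes $-\int_\Omega g(x,u)(u-w)\d x+\beta\int_\Omega|u|^{p-2}u\,(u-w)\d x$. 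Since $w=A(u)$ solves \eqref{NP} weakly, testing \eqref{WFNP} with $\phi=u-w$ rewrites $\int_\Omega g(x,u)(u-w)\d x$ in terms of the $p$-Laplacian, fractional $p$-Laplacian, and $\beta|w|^{p-2}w$ contributions of $w$. Subtracting, the $g$-term cancels and we arrive at the identity
\begin{align*}
\langle\Phi'(u),u-w\rangle &= \int_\Omega\big(|\nabla u|^{p-2}\nabla u-|\nabla w|^{p-2}\nabla w\big)\cdot\nabla(u-w)\d x \\
&\quad+\int_\Omega\int_\Omega\frac{\big(|\bar u|^{p-2}\bar u-|\bar w|^{p-2}\bar w\big)(\bar u-\bar w)}{|x-y|^{N+sp}}\d x\d y \\
&\quad+\beta\int_\Omega\big(|u|^{p-2}u-|w|^{p-2}w\big)(u-w)\d x,
\end{align*}
which has exactly the structure of the right-hand side of \eqref{eq4.5}; in particular each summand is nonnegative by Lemma~\ref{inqlmn}. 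This reduction is the core of the argument.

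For $p>2$, I would apply the first alternative of the monotonicity estimate in Lemma~\ref{inqlmn} to each of the three integrands pointwise. This gives $\langle\Phi'(u),u-w\rangle\ge d_3\|u-w\|_\beta^p$, and since $\|\cdot\|_\beta$ is equivalent to $\|\cdot\|_{\mathbb{X}_0^{s,p}(\Omega)}$, the first case follows with a suitable $a_1>0$.

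For $1<p\le 2$, I would instead apply the first line of the first inequality in Lemma~\ref{inqlmn}, yielding the degenerate weighted quantities $\int_\Omega|\nabla(u-w)|^2(|\nabla u|+|\nabla w|)^{p-2}\d x$ together with its fractional and $L^p$ analogues, exactly as in \eqref{eq4.6}. Applying the reverse H\"older inequality with conjugate exponents $\tfrac{p}{2}$ and $\tfrac{p}{p-2}$ bounds each such quantity from below by a product of the type $\big(\int_\Omega|\nabla(u-w)|^p\d x\big)^{2/p}\big(\int_\Omega(|\nabla u|+|\nabla w|)^p\d x\big)^{(p-2)/p}$, and similarly for the other two pieces. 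Since the exponent $p-2$ is nonpositive and $\|\nabla v\|_p$, $[v]_{W^{s,p}(\Omega)}$, $\|v\|_p$ are all $\le C\|v\|_{\mathbb{X}_0^{s,p}(\Omega)}$, each of the three ``denominator'' factors $\big(\int_\Omega(\cdots)^p\d x\big)^{(p-2)/p}$ is $\ge C\big(\|u\|_{\mathbb{X}_0^{s,p}(\Omega)}+\|w\|_{\mathbb{X}_0^{s,p}(\Omega)}\big)^{p-2}$. Factoring this common quantity out and combining the three numerator pieces by means of $a^2+b^2+c^2\ge\tfrac13(a+b+c)^2$ and the equivalence of norms to estimate $\|\nabla(u-w)\|_p^2+[u-w]_{W^{s,p}(\Omega)}^2+\beta\|u-w\|_p^2\ge C\|u-w\|_{\mathbb{X}_0^{s,p}(\Omega)}^2$, we obtain the second case with some $a_2>0$.

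I expect the only genuinely delicate step to be the $1<p\le 2$ case: one must ensure the reverse H\"older inequality is applied with an admissible (negative) exponent, interpret the degenerate weights $(|\nabla u|+|\nabla w|)^{p-2}$ with the convention $0\cdot\infty=0$ (consistent with the statement of Lemma~\ref{inqlmn}), and carefully track the fact that raising to the power $p-2\le 0$ reverses inequalities when passing from the $L^p$-norms of $u$ and $w$ to $\|u\|_{\mathbb{X}_0^{s,p}(\Omega)}+\|w\|_{\mathbb{X}_0^{s,p}(\Omega)}$. All the remaining manipulations are the same bookkeeping already carried out in the proof of Lemma~\ref{lmn4.1}.
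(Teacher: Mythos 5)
Your proposal is correct and follows essentially the same route as the paper: the identity for $\langle\Phi'(u),u-A(u)\rangle$ obtained by testing the weak formulation \eqref{WFNP} with $\phi=u-A(u)$, followed by Lemma \ref{inqlmn} (the $d_3$ case for $p>2$, and the degenerate $d_1$ case combined with a H\"older-type interpolation for $1<p\le 2$). The only cosmetic difference is in the bookkeeping for $1<p\le 2$: the paper keeps only the gradient-weighted term and applies forward H\"older to bound $\|u-A(u)\|^p_{\mathbb{X}_0^{s,p}(\Omega)}$ as in \eqref{eq4.15}--\eqref{eq4.16}, whereas you reverse-H\"older all three weighted terms in the style of the proof of Lemma \ref{lmn4.1}; both yield the stated estimate.
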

\begin{proof}
    Let $u\in \mathbb{X}_0^{s,p}(\Omega)$. We set $A(u)=w$, $\bar{u}(x,y)=u(x)-u(y)$, and $\bar{w}(x,y)=w(x)-w(y)$. Suppose $p>2$. Then  using Lemma \ref{inqlmn} and putting $\phi=u-w$ in \eqref{WFNP}, we obtain
    {\begin{align}\label{eq4.13}
        & \langle \Phi'(u),u-A(u) \rangle \\
        = & \int_\Omega |\nabla u|^{p-2} \nabla u\cdot\nabla(u-w)\d x \nonumber\\
        &+ {\int_{\mathbb{R}^N} \int_{\mathbb{R}^N}} \frac{|\bar{u}(x,y)|^{p-2}\bar{u}(x,y)(\bar{u}(x,y)-\bar{w}(x,y))}{|x-y|^{N+sp}}\d x \d y -\int_\Omega f(x,u)(u-w) \d x \nonumber \\
        = & \int_\Omega \left[|\nabla u|^{p-2} \nabla u-|\nabla w|^{p-2} \nabla w \right]\cdot \nabla(u-w)\d x \nonumber\\
        &+ {\int_{\mathbb{R}^N} \int_{\mathbb{R}^N}} \frac{\left[|\bar{u}(x,y)|^{p-2}\bar{u}(x,y)-|\bar{w}(x,y)|^{p-2}\bar{w}(x,y)\right](\bar{u}(x,y)-\bar{w}(x,y))}{|x-y|^{N+sp}} \d x \d y\nonumber \\
        &+ \beta \int_\Omega (|u|^{p-2}u-|w|^{p-2} w)(u-w) \d x\nonumber \\
        \geq & C \Bigg[\int_\Omega|\nabla (u-w)|^p \d x +{\int_{\mathbb{R}^N} \int_{\mathbb{R}^N}} \frac{|(u-w)(x)-(u-w)(y)|^p}{|x-y|^{N+sp}} \d x \d y+\beta\int_\Omega |u-w|^p \d x\bigg] \nonumber \\
        \geq & a_1 \|u-w\|^p_{\mathbb{X}_0^{s,p}(\Omega)},
    \end{align}}
    
    \noindent for some constant $a_1>0$. This proves \eqref{eq4.13L} for $p>2$. For $1<p\leq 2$, we apply Lemma \ref{inqlmn} again with $\phi=u-w$ as the test function in \eqref{WFNP} to obtain
    \begin{align}\label{eq4.14}
        & \langle \Phi'(u),u-A(u) \rangle \nonumber \\
        = & \int_\Omega \left[|\nabla u|^{p-2} \nabla u-|\nabla w|^{p-2} \nabla w \right] \cdot \nabla(u-w) \d x\nonumber\\
        & + {\int_{\mathbb{R}^N} \int_{\mathbb{R}^N}} \frac{\left[|\bar{u}(x,y)|^{p-2}\bar{u}(x,y)-|\bar{w}(x,y)|^{p-2}\bar{w}(x,y)\right](\bar{u}(x,y)-\bar{w}(x,y))}{|x-y|^{N+sp}}\d x\d y \nonumber \\
        &+ \beta \int_\Omega (|u|^{p-2}u-|w|^{p-2} w)(u-w)\d x \nonumber \\
        \geq & C \bigg[\int_\Omega |\nabla u-\nabla w|^2 (|\nabla u|+|\nabla w|)^{p-2} \d x \nonumber \\
         &+ {\int_{\mathbb{R}^N} \int_{\mathbb{R}^N}} \frac{|\bar{u}(x,y)-\bar{w}(x,y)|^2(|\bar{u}(x,y)|+|\bar{w}(x,y)|)^{p-2}}{|x-y|^{N+sp}}\d x \d y \nonumber \\
         & +\beta\int_\Omega |u-w|^2(|u|+|w|)^{p-2})\d x\bigg].
    \end{align}
We now employ the H\"older inequality and the Minkowski inequality to obtain the following estimate;
\begin{align}\label{eq4.15}
   \int_\Omega &|\nabla (u-w)|^p \d x+ {\int_{\mathbb{R}^N} \int_{\mathbb{R}^N}} \frac{|\bar{u}(x,y)-\bar{w}(x,y)|^p}{|x-y|^{N+sp}}\d x \d y \nonumber \\
   \leq & C \int_\Omega |\nabla (u-w)|^p \d x = C \int_\Omega |\nabla (u-w)|^p(|\nabla u|+|\nabla w|)^\frac{p(p-2)}{2} (|\nabla u|+|\nabla w|)^\frac{p(2-p)}{2}\d x \nonumber \\
   \leq & C \bigg( \int_\Omega |\nabla (u-w)|^2(|\nabla u|+|\nabla w|)^{p-2}\d x\bigg)^\frac{p}{2} \bigg( \int_\Omega (|\nabla u|+|\nabla w|)^p \d x\bigg)^\frac{2-p}{2} \nonumber \\
   \leq & C\bigg( \int_\Omega |\nabla (u-w)|^2(|\nabla u|+|\nabla w|)^{p-2}\d x\bigg)^\frac{p}{2} \bigg(\|\nabla u\|_p+\|\nabla w\|_p\bigg)^\frac{p(2-p)}{2} \nonumber \\
   \leq & C \bigg( \int_\Omega |\nabla (u-w)|^2(|\nabla u|+|\nabla w|)^{p-2}\d x\bigg)^\frac{p}{2} \bigg(\| u\|_{\mathbb{X}_0^{s,p}(\Omega)}+\| w\|_{\mathbb{X}_0^{s,p}(\Omega)}\bigg)^\frac{p(2-p)}{2},
\end{align}
for some C. Therefore, inequality \eqref{eq4.15} implies that 
\begin{align}\label{eq4.16}
    \|u-w\|_{\mathbb{X}_0^{s,p}(\Omega)}^2\leq C^\frac{2}{p} \bigg\{\bigg( \int_\Omega |\nabla (u-w)|^2(|\nabla u|+|\nabla w|)^{p-2}\d x \bigg)\nonumber\\
    \times \bigg(\| u\|_{\mathbb{X}_0^{s,p}(\Omega)}+\| w\|_{\mathbb{X}_0^{s,p}(\Omega)}\bigg)^{2-p}\bigg\}.
\end{align}

 \noindent Thus, from \eqref{eq4.14} and \eqref{eq4.16}, we conclude that there exists a positive constant $a_2$ such that \eqref{eq4.13L} holds true for $1<p\leq 2$. This completes the proof.
\end{proof}

\begin{lemma}\label{lmn4.3}
    There exist two constants $a_3, a_4>0$  such that
    {\small\begin{align}\label{eq4.19}
        \|\Phi'(u)\|_{{\mathbb{X}_0^{s,p}(\Omega)}^*} \leq \begin{cases}
             a_3\|u-A(u)\|_{\mathbb{X}_0^{s,p}(\Omega)}\left(\|u\|_{\mathbb{X}_0^{s,p}(\Omega)}+\|A(u)\|_{\mathbb{X}_0^{s,p}(\Omega)}\right)^{p-2}, &\text{if}~p>2\\
             a_4\|u-A(u)\|_{\mathbb{X}_0^{s,p}(\Omega)}^{p-1}, &\text{if}~1<p\leq2.
        \end{cases}
    \end{align}}
\end{lemma}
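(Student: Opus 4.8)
The plan is to produce a workable formula for $\langle\Phi'(u),\phi\rangle$ by subtracting the weak equation \eqref{WFNP} satisfied by $w:=A(u)$ from the expression \eqref{eq2.20}. Writing $\bar u(x,y)=u(x)-u(y)$, $\bar w(x,y)=w(x)-w(y)$ and recalling $g(x,u)=f(x,u)+\beta|u|^{p-2}u$, the integrals $\int_\Omega f(x,u)\phi\,\d x$ cancel and one gets, for every $\phi\in\mathbb{X}_0^{s,p}(\Omega)$,
\begin{align*}
\langle\Phi'(u),\phi\rangle &= \int_\Omega\big(|\nabla u|^{p-2}\nabla u-|\nabla w|^{p-2}\nabla w\big)\cdot\nabla\phi\,\d x\\
&\quad+\int_\Omega\int_\Omega\frac{\big(|\bar u(x,y)|^{p-2}\bar u(x,y)-|\bar w(x,y)|^{p-2}\bar w(x,y)\big)\big(\phi(x)-\phi(y)\big)}{|x-y|^{N+sp}}\,\d x\,\d y\\
&\quad+\beta\int_\Omega\big(|u|^{p-2}u-|w|^{p-2}w\big)\phi\,\d x.
\end{align*}
I would then estimate the three integrals one at a time. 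Each integrand contains a difference of the form $|x_1|^{p-2}x_1-|x_2|^{p-2}x_2$ — with $(x_1,x_2)$ equal to $(\nabla u,\nabla w)$, to $(\bar u(x,y),\bar w(x,y))$ and to $(u,w)$ respectively — to which the second inequality of Lemma \ref{inqlmn} applies pointwise, followed by Hölder's inequality, for the nonlocal term against the finite measure $\d\mu:=|x-y|^{-N-sp}\,\d x\,\d y$ on $\Omega\times\Omega$.

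For $p>2$, Lemma \ref{inqlmn} bounds each difference by $d_4(|x_1|+|x_2|)^{p-2}|x_1-x_2|$, and Hölder with the exponents $p,\ p,\ \tfrac{p}{p-2}$ (whose reciprocals sum to $1$) controls, e.g., the local integral by $d_4\,\big\||\nabla u|+|\nabla w|\big\|_{L^p(\Omega)}^{p-2}\,\|\nabla(u-w)\|_{L^p(\Omega)}\,\|\nabla\phi\|_{L^p(\Omega)}$, and analogously for the nonlocal integral (against $\d\mu$, with Gagliardo seminorms $[\cdot]_{s,p}$ appearing) and the $\beta$-integral. Using the triangle inequality together with \eqref{sg-emb2} and the Poincaré inequality \eqref{lmn2.1 PQ}, each factor built from $u$ and $w$ is dominated (up to a constant) by $\|u\|_{\mathbb{X}_0^{s,p}(\Omega)}+\|A(u)\|_{\mathbb{X}_0^{s,p}(\Omega)}$, each factor built from $u-w$ by $\|u-A(u)\|_{\mathbb{X}_0^{s,p}(\Omega)}$, and each factor built from $\phi$ by $\|\phi\|_{\mathbb{X}_0^{s,p}(\Omega)}$. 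Adding the three estimates gives $|\langle\Phi'(u),\phi\rangle|\le a_3(\|u\|_{\mathbb{X}_0^{s,p}(\Omega)}+\|A(u)\|_{\mathbb{X}_0^{s,p}(\Omega)})^{p-2}\|u-A(u)\|_{\mathbb{X}_0^{s,p}(\Omega)}\|\phi\|_{\mathbb{X}_0^{s,p}(\Omega)}$, and taking the supremum over $\phi\neq0$ yields the first case of \eqref{eq4.19}. For $1<p\le2$ one uses instead the other branch of Lemma \ref{inqlmn}, $\big||x_1|^{p-2}x_1-|x_2|^{p-2}x_2\big|\le d_2|x_1-x_2|^{p-1}$, and Hölder with exponents $\tfrac{p}{p-1}$ and $p$; each of the three integrals is then bounded by a constant times $\|u-A(u)\|_{\mathbb{X}_0^{s,p}(\Omega)}^{p-1}\|\phi\|_{\mathbb{X}_0^{s,p}(\Omega)}$ (again via \eqref{sg-emb2} and \eqref{lmn2.1 PQ}), giving the second case.

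This estimate is the natural ``dual'' companion to Lemma \ref{lmn4.2}, and I expect no genuine difficulty. The two steps that need attention are: (i) deriving the displayed identity for $\langle\Phi'(u),\phi\rangle$, which relies precisely on $w=A(u)$ being the weak solution of \eqref{NP} with datum $g(x,u)$, so that the nonlinear term $f$ disappears; and (ii) in the case $p>2$, organising the three-factor Hölder inequality — in particular carrying it out against the measure $\d\mu$ for the nonlocal integral — and then absorbing the Gagliardo seminorm and the $L^p$ norm into the equivalent homogeneous norm on $\mathbb{X}_0^{s,p}(\Omega)$ by \eqref{sg-emb2} and the Poincaré inequality.
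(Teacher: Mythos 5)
Your argument is essentially the paper's proof: both subtract the weak equation \eqref{WFNP} for $w=A(u)$ from \eqref{eq2.20} to replace $f$ by the differences of the three monotone terms, then combine the second inequality of Lemma \ref{inqlmn} with H\"older (the paper first splits off $\|\phi\|_{\mathbb{X}_0^{s,p}(\Omega)}$ and then estimates the resulting $L^{p/(p-1)}$ dual-norm factors, which is the same computation as your three-factor H\"older for $p>2$ and two-factor H\"older for $1<p\le 2$), finishing with \eqref{sg-emb2} and Poincar\'e exactly as you describe. The only slip is calling $|x-y|^{-N-sp}\,\d x\,\d y$ a finite measure on $\Omega\times\Omega$ (it is not, since $N+sp>N$), but H\"older's inequality does not require finiteness, so this does not affect the proof.
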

\begin{proof}
    Let $u, \phi\in \mathbb{X}_0^{s,p}(\Omega)$. We denote $A(u)=w$, $\bar{u}(x,y)=u(x)-u(y)$, $\bar{w}(x,y)=w(x)-w(y)$ and $\bar{\phi}(x,y) = \phi(x)-\phi(y)$. By applying the H\"older's inequality, we obtain
    \begin{align}\label{eq4.20}
        & \langle \Phi'(u),\phi \rangle \nonumber \\
        = & \int_\Omega |\nabla u|^{p-2} \nabla u \cdot\nabla\phi\d x+ {\int_{\mathbb{R}^N} \int_{\mathbb{R}^N}} \frac{|\bar{u}(x,y)|^{p-2}\bar{u}(x,y)\bar{\phi}(x,y)}{|x-y|^{N+sp}}\d x \d y -\int_\Omega f(x,u)\phi \d x \nonumber \\ 
        = & \int_\Omega \left[|\nabla u|^{p-2} \nabla u-|\nabla w|^{p-2} \nabla w \right]\cdot \nabla\phi \d x \nonumber\\
        &+ {\int_{\mathbb{R}^N} \int_{\mathbb{R}^N}} \frac{\left[|\bar{u}(x,y)|^{p-2}\bar{u}(x,y)-|\bar{w}(x,y)|^{p-2}\bar{w}(x,y)\right]\bar{\phi}(x,y)}{|x-y|^{N+sp}} \d x  \d y\nonumber \\
        &+ \beta \int_\Omega (|u|^{p-2}u-|w|^{p-2} w)\phi \d x \nonumber \\
        \leq & \bigg( \int_\Omega| |\nabla u|^{p-2} \nabla u-|\nabla w|^{p-2} \nabla w|^\frac{p}{p-1} \d x\bigg)^\frac{p-1}{p}\bigg( \int_\Omega |\nabla \phi|^p\d x \bigg)^\frac{1}{p}\nonumber \\
        &+ \bigg\{ \bigg( {\int_{\mathbb{R}^N} \int_{\mathbb{R}^N}} \frac{\|\bar{u}(x,y)|^{p-2}\bar{u}(x,y)-|\bar{w}(x,y)|^{p-2}\bar{w}(x,y)|^\frac{p}{p-1}}{|x-y|^{N+sp}}\d x\d y\bigg)^\frac{p-1}{p}
        \nonumber \\ 
         & \hspace{0.5cm} \times \bigg({\int_{\mathbb{R}^N} \int_{\mathbb{R}^N}} \frac{|\bar{\phi}(x,y)|^p}{|x-y|^{N+sp}}\d x\d y\bigg)^\frac{1}{p} \bigg\}\nonumber\\
         &+ \beta \bigg(\int_\Omega \|u|^{p-2}u-|w|^{p-2} w|^\frac{p}{p-1}\d x\bigg)^\frac{p-1}{p}\bigg(\int_\Omega|\phi|^p\d x\bigg)^\frac{1}{p}\nonumber \\
         \leq & C \Bigg[ \bigg( \int_\Omega| |\nabla u|^{p-2} \nabla u-|\nabla w|^{p-2} \nabla w|^\frac{p}{p-1} \d x\bigg)^\frac{p-1}{p}+\beta \bigg(\int_\Omega \|u|^{p-2}u-|w|^{p-2} w|^\frac{p}{p-1}\d x\bigg)^\frac{p-1}{p} \nonumber \\
         &+\bigg({\int_{\mathbb{R}^N} \int_{\mathbb{R}^N}} \frac{\|\bar{u}(x,y)|^{p-2}\bar{u}(x,y)-|\bar{w}(x,y)|^{p-2}\bar{w}(x,y)|^\frac{p}{p-1}}{|x-y|^{N+sp}}\d x\d y\bigg)^\frac{p-1}{p} \bigg]\|\phi\|_{\mathbb{X}_0^{s,p}(\Omega)}. 
         \end{align}
        Therefore, from \eqref{eq4.20}, we get 
         \begin{align}\label{eq4.21}
            \|\Phi'(u)\|_{{\mathbb{X}_0^{s,p}(\Omega)}^*}
            \leq &C \Bigg[ \bigg( \int_\Omega| |\nabla u|^{p-2} \nabla u-|\nabla w|^{p-2} \nabla w|^\frac{p}{p-1}\d x \bigg)^\frac{p-1}{p}\nonumber \\
         &+\bigg({\int_{\mathbb{R}^N} \int_{\mathbb{R}^N}} \frac{\|\bar{u}(x,y)|^{p-2}\bar{u}(x,y)-|\bar{w}(x,y)|^{p-2}\bar{w}(x,y)|^\frac{p}{p-1}}{|x-y|^{N+sp}}\d x \d y\bigg)^\frac{p-1}{p} \nonumber\\
         & +\beta \bigg(\int_\Omega \|u|^{p-2}u-|w|^{p-2} w|^\frac{p}{p-1}\bigg)^\frac{p-1}{p} \d x\bigg]. 
         \end{align}
        Now for $p>2$, recall Lemma \ref{inqlmn}. Thus, using the H\"older and Minkowski inequalities in \eqref{eq4.21}, we derive
\begin{align}
\|\Phi'(u)\|_{{\mathbb{X}_0^{s,p}(\Omega)}^*}\leq & C \bigg[ \bigg(\int_\Omega(|\nabla u|+|\nabla w|)^\frac{p(p-2)}{p-1}|\nabla u-\nabla w|^\frac{p}{p-1}\d x\bigg)^\frac{p-1}{p}\nonumber\\
& +\bigg({\int_{\mathbb{R}^N} \int_{\mathbb{R}^N}} \frac{(|\bar{u}(x,y)|+|\bar{w}(x,y|)^\frac{p(p-2)}{p-1}|\bar{u}(x,y)-\bar{w}(x,y)|^\frac{p}{p-1}}{|x-y|^{N+sp}}\d x \d y\bigg)^\frac{p-1}{p}\nonumber\\
& + \beta \bigg( \int_\Omega(|u|+|w|)^\frac{p(p-2)}{p-1}|u-w|^\frac{p}{p-1}\d x \bigg)^\frac{p-1}{p}\bigg]\nonumber\\
\leq & C \bigg[ \bigg(\int_\Omega(|\nabla u|+|\nabla w|)^p \d x \bigg)^\frac{p-2}{p} \bigg( \int_\Omega |\nabla u-\nabla w|^p \d x\bigg)^\frac{1}{p} \nonumber \\
 & +\bigg\{ \bigg({\int_{\mathbb{R}^N} \int_{\mathbb{R}^N}} \frac{(|\bar{u}(x,y)|+|\bar{w}(x,y|)^p}{|x-y|^{N+sp}}\d x \d y\bigg)^\frac{p-2}{p}\nonumber \\
 &\hspace{0.5cm} \times \bigg({\int_{\mathbb{R}^N} \int_{\mathbb{R}^N}} \frac{|\bar{u}(x,y)-\bar{w}(x,y)|^p}{|x-y|^{N+sp}}\d x \d y\bigg)^\frac{1}{p} \bigg\} \nonumber\\
& + \beta \bigg( \int_\Omega(|u|+|w|)^p\d x\bigg)^\frac{p-2}{p}\bigg(\int_\Omega|u-w|^p \d x \bigg)^\frac{1}{p}\bigg]\nonumber\\
\leq & a_3\left(\|u\|_{\mathbb{X}_0^{s,p}(\Omega)}+\|A(u)\|_{\mathbb{X}_0^{s,p}(\Omega)}\right)\|u-A(u)\|_{\mathbb{X}_0^{s,p}(\Omega)},
\end{align}
for some constant $a_3>0$, proving \eqref{eq4.19} for $p>2$. When $1<p\leq 2$, again apply Lemma \ref{inqlmn} in \eqref{eq4.21} to obtain
\begin{align}
    \|\Phi'(u)\|_{{\mathbb{X}_0^{s,p}(\Omega)}^*}\leq & C \bigg[  \bigg( \int_\Omega |\nabla u-\nabla w|^p \d x\bigg)^\frac{p-1}{p} + \bigg({\int_{\mathbb{R}^N} \int_{\mathbb{R}^N}} \frac{|\bar{u}(x,y)-\bar{w}(x,y)|^p}{|x-y|^{N+sp}}\d x \d y\bigg)^\frac{p-1}{p}\nonumber\\
& + \beta \bigg(\int_\Omega|u-w|^p \d x \bigg)^\frac{p-1}{p}\bigg]\nonumber\\
\leq & a_4 \|u-A(u)\|_{\mathbb{X}_0^{s,p}(\Omega)}^{p-1},
\end{align}
for some $a_4>0.$ This completes the proof.
    \end{proof}
\begin{lemma}\label{lmn4.4}
    Let $p>1$. For all $u\in \mathbb{X}_0^{s,p}(\Omega)$ with $\Phi(u)\leq c$ for any $c\in \mathbb{R}$, there exists a constant $a_5>0$ (depending only on $c$), such that 
    \begin{equation}\label{nn}
        \|u\|_{\mathbb{X}_0^{s,p}(\Omega)}+\|A(u)\|_{\mathbb{X}_0^{s,p}(\Omega)}\leq a_5\left(1+\|u-A(u)\|_{\mathbb{X}_0^{s,p}(\Omega)}\right).
    \end{equation} 
\end{lemma}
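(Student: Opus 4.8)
The goal is to bound $\|u\|_{\mathbb{X}_0^{s,p}(\Omega)}+\|A(u)\|_{\mathbb{X}_0^{s,p}(\Omega)}$ by $a_5(1+\|u-A(u)\|_{\mathbb{X}_0^{s,p}(\Omega)})$ whenever $\Phi(u)\le c$. The natural strategy is to first control $\|u\|_{\mathbb{X}_0^{s,p}(\Omega)}$ alone using the energy bound together with the Ambrosetti--Rabinowitz condition $(f_3)$, and then to recover $\|A(u)\|_{\mathbb{X}_0^{s,p}(\Omega)}$ by the triangle inequality $\|A(u)\|_{\mathbb{X}_0^{s,p}(\Omega)}\le\|u\|_{\mathbb{X}_0^{s,p}(\Omega)}+\|u-A(u)\|_{\mathbb{X}_0^{s,p}(\Omega)}$, which immediately reduces the whole statement to a bound for $\|u\|_{\mathbb{X}_0^{s,p}(\Omega)}$ in terms of $1+\|u-A(u)\|_{\mathbb{X}_0^{s,p}(\Omega)}$.

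To bound $\|u\|_{\mathbb{X}_0^{s,p}(\Omega)}$, I would run the standard AR argument adapted to our setting. From $(f_3)$ there are $\mu>p$, $M>0$ with $f(x,u)u\ge\mu F(x,u)>0$ for $|u|\ge M$; combined with continuity of $f$ on $\bar\Omega\times[-M,M]$ this yields constants so that $F(x,u)\le\frac1\mu f(x,u)u + C$ for all $(x,u)\in\bar\Omega\times\mathbb{R}$. Then compute
\begin{align*}
\Phi(u)-\frac1\mu\langle\Phi'(u),u\rangle
&=\Big(\frac1p-\frac1\mu\Big)\|u\|_{\mathbb{X}_0^{s,p}(\Omega)}^p
+\int_\Omega\Big(\frac1\mu f(x,u)u-F(x,u)\Big)\d x\\
&\ge\Big(\frac1p-\frac1\mu\Big)\|u\|_{\mathbb{X}_0^{s,p}(\Omega)}^p - C|\Omega|.
\end{align*}
On the other hand, $\langle\Phi'(u),u\rangle=\langle\Phi'(u),u-A(u)\rangle+\langle\Phi'(u),A(u)\rangle$, and each term is estimated: the first by $\|\Phi'(u)\|_{{\mathbb{X}_0^{s,p}(\Omega)}^*}\|u-A(u)\|_{\mathbb{X}_0^{s,p}(\Omega)}$ and then via Lemma \ref{lmn4.3}; the second similarly using $\|A(u)\|_{\mathbb{X}_0^{s,p}(\Omega)}\le\|u\|_{\mathbb{X}_0^{s,p}(\Omega)}+\|u-A(u)\|_{\mathbb{X}_0^{s,p}(\Omega)}$. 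Since $\Phi(u)\le c$, rearranging gives
$$\Big(\frac1p-\frac1\mu\Big)\|u\|_{\mathbb{X}_0^{s,p}(\Omega)}^p\le c+C|\Omega|+\frac1\mu\big|\langle\Phi'(u),u\rangle\big|,$$
so everything comes down to showing the right-hand side is dominated by a constant times $1+\|u\|_{\mathbb{X}_0^{s,p}(\Omega)}^{p-1}(1+\|u-A(u)\|_{\mathbb{X}_0^{s,p}(\Omega)})$ — using Lemma \ref{lmn4.3} in both cases $p>2$ and $1<p\le2$ to bound $\|\Phi'(u)\|_{{\mathbb{X}_0^{s,p}(\Omega)}^*}$ by an expression that is at most $C(1+\|u\|_{\mathbb{X}_0^{s,p}(\Omega)})^{p-1}(1+\|u-A(u)\|_{\mathbb{X}_0^{s,p}(\Omega)})$ up to lower-order terms.

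At that point one has an inequality of the form $t^p\le C(1+t^{p-1}(1+\delta))$ where $t=\|u\|_{\mathbb{X}_0^{s,p}(\Omega)}$ and $\delta=\|u-A(u)\|_{\mathbb{X}_0^{s,p}(\Omega)}$; dividing by $t^{p-1}$ (or treating small $t$ separately) forces $t\le C(1+\delta)$, which is exactly the bound needed for $\|u\|_{\mathbb{X}_0^{s,p}(\Omega)}$, and then the triangle inequality finishes $\|A(u)\|_{\mathbb{X}_0^{s,p}(\Omega)}$. \textbf{The main obstacle} is the bookkeeping of the exponents in the case $p>2$: Lemma \ref{lmn4.3} gives $\|\Phi'(u)\|_{{\mathbb{X}_0^{s,p}(\Omega)}^*}$ controlled by $\|u-A(u)\|_{\mathbb{X}_0^{s,p}(\Omega)}(\|u\|_{\mathbb{X}_0^{s,p}(\Omega)}+\|A(u)\|_{\mathbb{X}_0^{s,p}(\Omega)})^{p-2}$, so when pairing against $u$ and $A(u)$ one produces terms of order $\|u\|_{\mathbb{X}_0^{s,p}(\Omega)}^{p-1}$ multiplied by powers of $\|u-A(u)\|_{\mathbb{X}_0^{s,p}(\Omega)}$, and one must carefully check via Young's inequality that these can be absorbed into $\frac12(\frac1p-\frac1\mu)\|u\|_{\mathbb{X}_0^{s,p}(\Omega)}^p$ plus a term linear in $\|u-A(u)\|_{\mathbb{X}_0^{s,p}(\Omega)}$ (raised to an appropriate power that the final normalization tolerates). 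The case $1<p\le2$ is analogous but simpler since the bound on $\|\Phi'(u)\|_{{\mathbb{X}_0^{s,p}(\Omega)}^*}$ does not involve $\|u\|_{\mathbb{X}_0^{s,p}(\Omega)}$ at all.
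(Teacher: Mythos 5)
Your plan is essentially the paper's own proof: both start from the identity for $\Phi(u)-\frac{1}{\mu}\langle\Phi'(u),u\rangle$ and the AR condition $(f_3)$ to get $C\|u\|_{\mathbb{X}_0^{s,p}(\Omega)}^p\le c+C'+|\langle\Phi'(u),u\rangle|$, then bound $\|\Phi'(u)\|_{{\mathbb{X}_0^{s,p}(\Omega)}^*}$ via Lemma \ref{lmn4.3} in the two ranges of $p$, absorb by Young's inequality, and finish with the triangle inequality $\|A(u)\|_{\mathbb{X}_0^{s,p}(\Omega)}\le\|u\|_{\mathbb{X}_0^{s,p}(\Omega)}+\|u-A(u)\|_{\mathbb{X}_0^{s,p}(\Omega)}$. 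The only (harmless) imprecision is that for $p>2$ the right-hand side actually contains terms like $\|u-A(u)\|_{\mathbb{X}_0^{s,p}(\Omega)}^{p-1}\|u\|_{\mathbb{X}_0^{s,p}(\Omega)}$ and $\|u-A(u)\|_{\mathbb{X}_0^{s,p}(\Omega)}^{p}$ rather than literally $\|u\|_{\mathbb{X}_0^{s,p}(\Omega)}^{p-1}(1+\|u-A(u)\|_{\mathbb{X}_0^{s,p}(\Omega)})$, but the Young-absorption you describe handles these exactly as in the paper, yielding $\|u\|_{\mathbb{X}_0^{s,p}(\Omega)}^{p}\le C(1+\|u-A(u)\|_{\mathbb{X}_0^{s,p}(\Omega)}^{p})$ and hence the claimed bound.
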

\begin{proof}
Let $u\in \mathbb{X}_0^{s,p}(\Omega)$ and $A(u)=w$. Therefore, we have
    \begin{align}\label{eq4.24}
        \Phi(u)-\frac{1}{\mu} \langle \Phi'(u),u \rangle = & \left( \frac{1}{p}-\frac{1}{\mu}\right) \bigg( \int_\Omega |\nabla u|^{p} \d x+{\int_{\mathbb{R}^N} \int_{\mathbb{R}^N}} \frac{|u(x)-u(y)|^{p}}{|x-y|^{N+sp}}\d x \d y\bigg)\nonumber \\
        & - \int_\Omega\bigg(F(x,u)-\frac{1}{\mu}f(x,u)u \bigg)\d x.
    \end{align}
    For $\Phi(u)\leq c$, the condition $(f_3)$ and \eqref{eq4.24} imply that 
    \begin{align}\label{eq4.25}
        C \|u\|_{\mathbb{X}_0^{s,p}(\Omega)}^p \leq \Phi(u)-\frac{1}{\mu} \langle \Phi'(u),u \rangle, 
    \end{align}
    Again, from $\eqref{eq4.25}$, we get
    \begin{align}\label{eq4.26}
        \|u\|_{\mathbb{X}_0^{s,p}(\Omega)}^p \leq C\left(1+\|\Phi'(u)\|_{{\mathbb{X}_0^{s,p}(\Omega)}^*}\|u\|_{\mathbb{X}_0^{s,p}(\Omega)}\right).
    \end{align}
    If $1<p\leq2$, we use the inequalities \eqref{eq4.19} and \eqref{eq4.26} to obtain
    \begin{align}\label{eq4.27}
         \|u\|_{\mathbb{X}_0^{s,p}(\Omega)}^p \leq C\left(1+\|u-A(u)\|_{\mathbb{X}_0^{s,p}(\Omega)}^{p-1}\|u\|_{\mathbb{X}_0^{s,p}(\Omega)}\right).
    \end{align}
    Applying the Young's inequality in \eqref{eq4.27}, we have
    \begin{align}\label{eq4.28}
        \|u\|_{\mathbb{X}_0^{s,p}(\Omega)} \leq C\left(1+\|u-A(u)\|_{\mathbb{X}_0^{s,p}(\Omega)}\right).
    \end{align}
    Therefore, the continuity of the operator $A$ and the inequality \eqref{eq4.28} leads to the result. Similarly, for $p>2$ we use \eqref{eq4.19} and \eqref{eq4.26} to obtain
    \begin{align}\label{eq4.29}
        \|u\|_{\mathbb{X}_0^{s,p}(\Omega)}^p \leq C \left(1+\|u-A(u)\|_{\mathbb{X}_0^{s,p}(\Omega)}\left(\|u\|_{\mathbb{X}_0^{s,p}(\Omega)}+\|A(u)\|_{\mathbb{X}_0^{s,p}(\Omega)}\right)^{p-2}\|u\|_{\mathbb{X}_0^{s,p}(\Omega)}\right).
    \end{align}
    Thus using the Young's inequality in \eqref{eq4.29} with $p'=\frac{p}{p-1}$, we get
    \begin{align}\label{eq4.30}
        \|u\|_{\mathbb{X}_0^{s,p}(\Omega)}^p \leq C \left(1+\|u-A(u)\|^{p'}_{\mathbb{X}_0^{s,p}(\Omega)}\left(\|u\|_{\mathbb{X}_0^{s,p}(\Omega)}+\|A(u)\|_{\mathbb{X}_0^{s,p}(\Omega)}\right)^{p-p'}\right).
    \end{align}
    This implies that
    \begin{align}\label{eq4.31}
         \|u\|_{\mathbb{X}_0^{s,p}(\Omega)} \leq C \left(1+\|u-A(u)\|^\frac{p'}{p}_{\mathbb{X}_0^{s,p}(\Omega)}\left(\|u\|_{\mathbb{X}_0^{s,p}(\Omega)}+\|A(u)\|_{\mathbb{X}_0^{s,p}(\Omega)}\right)^{1-\frac{p'}{p}}\right).
    \end{align}
    Since $A$ is continuous, the inequality \eqref{eq4.31} and Young's inequality conclude the estimate \eqref{nn} for $p>2$, completing the proof.
\end{proof}
\begin{lemma}\label{lmn4.5}
    Let $a<b$ and $a_1>0$. For every $u\in \Phi^{-1}[a,b]$ with $\|\Phi'(u)\|_{{\mathbb{X}_0^{s,p}(\Omega)}^*}\geq a_1$, there exists $b_1>0$ such that 
    $$\|u-A(u)\|_{\mathbb{X}_0^{s,p}(\Omega)}\geq b_1,$$
\end{lemma}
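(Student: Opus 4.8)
The plan is to combine the lower bound on $\|\Phi'(u)\|$ with the upper estimates of Lemma \ref{lmn4.3}, using Lemma \ref{lmn4.4} to control the auxiliary quantity $\|u\|_{\mathbb{X}_0^{s,p}(\Omega)}+\|A(u)\|_{\mathbb{X}_0^{s,p}(\Omega)}$ that appears there when $p>2$. Fix $u\in\Phi^{-1}[a,b]$ with $\|\Phi'(u)\|_{{\mathbb{X}_0^{s,p}(\Omega)}^*}\geq a_1$ and set $t:=\|u-A(u)\|_{\mathbb{X}_0^{s,p}(\Omega)}$. Since $\Phi(u)\leq b$, Lemma \ref{lmn4.4} supplies a constant $a_5=a_5(b)>0$ with $\|u\|_{\mathbb{X}_0^{s,p}(\Omega)}+\|A(u)\|_{\mathbb{X}_0^{s,p}(\Omega)}\leq a_5(1+t)$.

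For $1<p\leq 2$, Lemma \ref{lmn4.3} gives directly $a_1\leq\|\Phi'(u)\|_{{\mathbb{X}_0^{s,p}(\Omega)}^*}\leq a_4\,t^{\,p-1}$, hence $t\geq (a_1/a_4)^{1/(p-1)}=:b_1>0$. For $p>2$, Lemma \ref{lmn4.3} together with the bound from Lemma \ref{lmn4.4} yields
\begin{align*}
a_1\leq\|\Phi'(u)\|_{{\mathbb{X}_0^{s,p}(\Omega)}^*}\leq a_3\,t\left(\|u\|_{\mathbb{X}_0^{s,p}(\Omega)}+\|A(u)\|_{\mathbb{X}_0^{s,p}(\Omega)}\right)^{p-2}\leq a_3 a_5^{\,p-2}\,t\,(1+t)^{p-2}.
\end{align*}
The right-hand side is a continuous, strictly increasing function $\varphi$ of $t\in[0,\infty)$ with $\varphi(0)=0$, so there is $\delta>0$ with $\varphi(t)<a_1$ for all $t\in[0,\delta)$; this forces $t\geq\delta=:b_1>0$. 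In either case $b_1$ depends only on $p$, the structural constants $a_3,a_4$, and (through $a_5$) on the upper level $b$ — but not on the individual $u$ — so the bound holds uniformly over $\Phi^{-1}[a,b]$, which is exactly the assertion.

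There is no real obstacle beyond bookkeeping; the one point requiring care is that in the superquadratic regime the estimate of Lemma \ref{lmn4.3} carries the factor $\big(\|u\|_{\mathbb{X}_0^{s,p}(\Omega)}+\|A(u)\|_{\mathbb{X}_0^{s,p}(\Omega)}\big)^{p-2}$, which is not a priori bounded, so one genuinely needs the energy bound $\Phi(u)\leq b$ via Lemma \ref{lmn4.4} to reabsorb it into a function of $t$ alone. An equivalent, perhaps cleaner, way to present the argument is by contradiction: were the claim false there would exist $u_n\in\Phi^{-1}[a,b]$ with $\|\Phi'(u_n)\|_{{\mathbb{X}_0^{s,p}(\Omega)}^*}\geq a_1$ yet $\|u_n-A(u_n)\|_{\mathbb{X}_0^{s,p}(\Omega)}\to 0$, and plugging this into the two displayed estimates would force $\|\Phi'(u_n)\|_{{\mathbb{X}_0^{s,p}(\Omega)}^*}\to 0$, contradicting $a_1>0$.
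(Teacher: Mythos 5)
Your proposal is correct and takes essentially the same route as the paper: both rest on the upper bounds of Lemma \ref{lmn4.3}, with Lemma \ref{lmn4.4} (applicable since $\Phi(u)\leq b$) used to absorb the factor $\left(\|u\|_{\mathbb{X}_0^{s,p}(\Omega)}+\|A(u)\|_{\mathbb{X}_0^{s,p}(\Omega)}\right)^{p-2}$ in the case $p>2$. The only difference is presentational — you give the direct quantitative bound, while the paper runs the contradiction argument you sketch in your last sentence.
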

\begin{proof}
    Suppose the conclusion fails. Then we assume by contradiction that there exists a sequence $(u_n)\subset \mathbb{X}_0^{s,p}(\Omega)$ that satisfying $(u_n)\subset \Phi^{-1}[a,b]$ and $\|\Phi'(u_n)\|_{{\mathbb{X}_0^{s,p}(\Omega)}^*}\geq a_1$ such that $$\|u_n-A(u_n)\|\rightarrow 0.$$
    For $p>2$, Lemma \ref{lmn4.3} and Lemma \ref{lmn4.4} conclude that
    \begin{align}\label{eq4.32}
        \|\Phi'(u_n)\|_{{\mathbb{X}_0^{s,p}(\Omega)}^*} & \leq a_3\|u_n-A(u_n)\|_{\mathbb{X}_0^{s,p}(\Omega)}\left(\|u_n\|_{\mathbb{X}_0^{s,p}(\Omega)}+\|A(u_n)\|_{\mathbb{X}_0^{s,p}(\Omega)}\right)^{p-2}\nonumber\\
        & \leq a_3a_5^{p-2}\|u_n-A(u_n)\|_{\mathbb{X}_0^{s,p}(\Omega)}\left(1+\|u_n-A(u_n)\|_{\mathbb{X}_0^{s,p}(\Omega)}\right)^{p-2},
    \end{align}
    implying that $\|\Phi'(u_n)\|_{{\mathbb{X}_0^{s,p}(\Omega)}^*}\rightarrow 0$. Thus we arrive at a contradiction. Again, for $1<p\leq2$, from Lemma \ref{lmn4.3}, we get
    \begin{align}\label{4.33}
        \|\Phi'(u_n)\|_{{\mathbb{X}_0^{s,p}(\Omega)}^*} \leq a_4\|u_n-A(u_n)\|_{\mathbb{X}_0^{s,p}(\Omega)}^{p-1}.
    \end{align}
    Observe that \eqref{4.33} gives $\|\Phi'(u_n)\|_{{\mathbb{X}_0^{s,p}(\Omega)}^*}\rightarrow 0$, which is a contradiction. This completes the proof.
\end{proof}
\begin{lemma}\label{lmn4.6}
    There exists $\epsilon_0$, (small enough), such that for every $\epsilon \in(0,\epsilon_0)$, we have
    \begin{align}\label{eq4.34}
        \dist_\beta(A(u),P^-)& <\theta \epsilon,~ \forall\, u\in \bar{P}^-_\epsilon~\text{and}~
        \dist_\beta(A(u),P^+) <\theta \epsilon,~ \forall\,u\in \bar{P}^+_\epsilon,
    \end{align}
    for some $\theta\in(0,1).$
\end{lemma}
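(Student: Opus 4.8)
The plan is to prove the first estimate, $\dist_\beta(A(u),P^-)<\theta\epsilon$ for $u\in\bar{P}^-_\epsilon$; the one for $P^+$ follows by the mirror‑image argument (test with the negative part and use that $t\mapsto-t$ interchanges $P^+$ and $P^-$ and the roles of $u^+$ and $u^-$). Fix $u\in\bar{P}^-_\epsilon$ and write $w=A(u)$; it is enough to treat the case $\dist_\beta(u,P^-)=\epsilon$. Since $w^-\in P^-$, we have $\dist_\beta(A(u),P^-)\le\|w-w^-\|_\beta=\|w^+\|_\beta$, so it suffices to bound $\|w^+\|_\beta$ by $\theta\epsilon$. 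First I would take $\phi=w^+$ as test function in the weak formulation \eqref{WFNP} of $-\Delta_pw+(-\Delta)^s_pw+\beta|w|^{p-2}w=g(x,u)$. For the local part one has the identities $\int_\Omega|\nabla w|^{p-2}\nabla w\cdot\nabla w^+\,\d x=\|\nabla w^+\|_p^p$ and $\beta\int_\Omega|w|^{p-2}ww^+\,\d x=\beta\|w^+\|_p^p$, while for the nonlocal part I would invoke the elementary pointwise inequality $|a-b|^{p-2}(a-b)(a^+-b^+)\ge|a^+-b^+|^p$, valid for all $a,b\in\mathbb R$ and all $p>1$ (with equality when $a,b$ have the same sign), applied with $a=w(x)$, $b=w(y)$, so that the double integral is $\ge[w^+]_{W^{s,p}(\Omega)}^p$. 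Adding up, $\|w^+\|_\beta^p\le\int_\Omega g(x,u)w^+\,\d x$.

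Next I would exploit the sign of $g$. Because $\beta$ was chosen so that $f(x,t)t+\beta|t|^p>0$ for $t\ne0$, we have $g(x,t)t>0$ for $t\ne0$ (and $g(x,0)=0$ by $(f_1)$), so $g(x,\cdot)$ has the sign of its argument; hence on $\{u\le0\}$ the integrand $g(x,u)w^+\le0$, while on $\{u>0\}$ it equals $g(x,u^+)w^+$. Therefore $\int_\Omega g(x,u)w^+\,\d x\le\int_\Omega g(x,u^+)w^+\,\d x$. Splitting $g(x,t)=f(x,t)+\beta|t|^{p-2}t$, using $(f_1)$–$(f_2)$ in the standard form ``for every $\delta>0$ there is $C_\delta$ with $|f(x,t)|\le\delta|t|^{p-1}+C_\delta|t|^{q-1}$'', and then Hölder's inequality together with the continuous embedding $\mathbb X_0^{s,p}(\Omega)\hookrightarrow L^q(\Omega)$ (Theorem \ref{thm cpt}), I would arrive at
$$\|w^+\|_\beta^p\le\Big(\tfrac{\beta+\delta}{\lambda_1+\beta}\,\epsilon^{p-1}+C\,\epsilon^{q-1}\Big)\|w^+\|_\beta ,$$
where $\lambda_1>0$ is the first Dirichlet eigenvalue of $-\Delta_p$ on the bounded domain $\Omega$ and $C=C(\delta)$.

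The one delicate point is the appearance of the factor $\tfrac{\beta+\delta}{\lambda_1+\beta}$, which is $<1$ for $\delta<\lambda_1$, rather than a constant $\ge1$. It comes from the Poincaré inequality $\lambda_1\|\psi\|_p^p\le\|\nabla\psi\|_p^p$ on $W_0^{1,p}(\Omega)$, which gives on the one hand $\|w^+\|_p\le(\lambda_1+\beta)^{-1/p}\|w^+\|_\beta$ and, on the other, since $\|\,\cdot\,\|_\beta^p\ge(\lambda_1+\beta)\|\,\cdot\,\|_p^p$ and $\inf_{v\in P^-}\|u-v\|_p=\|u^+\|_p$, the bound $\|u^+\|_p\le(\lambda_1+\beta)^{-1/p}\dist_\beta(u,P^-)=(\lambda_1+\beta)^{-1/p}\epsilon$; multiplying these with the constant $\beta$ coming from the term $\beta|u^+|^{p-2}u^+$ in $g$ produces exactly $\beta/(\lambda_1+\beta)$, and the extra $\delta$ comes from the $p$‑growth part of $f$. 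The subcritical factors are handled the same way: $\|u^+\|_q\le C\epsilon$ and $\|w^+\|_q\le C\|w^+\|_\beta$. If $w^+\ne0$ I would divide by $\|w^+\|_\beta$, choose $\delta=\lambda_1/2$ so that $\tau:=\tfrac{\beta+\lambda_1/2}{\lambda_1+\beta}<1$, and then pick $\epsilon_0$ small enough that $\tau+C\epsilon_0^{\,q-p}\le\tfrac{1+\tau}{2}=:\tau'<1$ (possible since $q>p$); this yields $\|w^+\|_\beta^{p-1}\le\tau'\epsilon^{p-1}$, i.e.\ $\|w^+\|_\beta\le(\tau')^{1/(p-1)}\epsilon$, so the conclusion holds with $\theta:=(\tau')^{1/(p-1)}\in(0,1)$ (and $w^+=0$ is trivial). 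Note that this argument is uniform in $p\in(1,\infty)$, so unlike the earlier lemmas no split into the cases $p>2$ and $1<p\le2$ is required.

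I expect the main obstacle to be exactly obtaining this constant strictly below $1$: the naive estimate using only $\|\psi\|_p\le\beta^{-1/p}\|\psi\|_\beta$ produces the coefficient $1$ in front of $\epsilon^{p-1}$ and is useless, and the remedy is the sharper Poincaré comparison above, which is precisely where boundedness of $\Omega$ enters. The remaining points are routine but should be verified: the pointwise nonlocal inequality and its sign‑reversed analogue $|a-b|^{p-2}(a-b)(a^--b^-)\ge|a^--b^-|^p$ used for the $P^+$ case, and the fact that $u^\pm,w^\pm\in\mathbb X_0^{s,p}(\Omega)$, so that the truncations are admissible test functions.
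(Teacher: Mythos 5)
Your proposal is correct and follows essentially the same route as the paper's proof: test \eqref{WFNP} with $w^+$, use the pointwise inequality to bound the left side below by $\|w^+\|_\beta^p$, replace $g(x,u)$ by $g(x,u^+)$ via the sign of $g$, apply the growth bound $|f(x,t)|\le\delta|t|^{p-1}+C_\delta|t|^{q-1}$ with H\"older, Poincar\'e/eigenvalue comparison and the bound $\|u^\pm\|_r\le\|u-v\|_r$ for $v\in P^\mp$, and then choose $\delta$ and $\epsilon_0$ so the resulting coefficient of $\epsilon^{p-1}$ is strictly below $1$. The only (immaterial) differences are that you use the constant $\lambda_1+\beta$ where the paper uses $\lambda_1+\lambda_{1,s}+\beta$, and your fixed choices of $\delta$ and $\theta$ differ slightly from the paper's.
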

\begin{proof}
   We only prove the estimate $\dist_\beta(A(u),P^-) <\theta \epsilon,~ \forall\, u\in \bar{P}^-_\epsilon$  and the other one follows similarly. Let $u\in \bar{P}_\epsilon^-$
   and define $A(u)=w$. From $(f_1)$ and $(f_2)$, we deduce that for every $\delta>0$, there exists $C_\delta>0$ such that
   \begin{align}\label{eq4.36}
   |f(x,t)|\leq \delta|t|^{p-1}+C_\delta|t|^{q-1},~\forall\,x\in \bar{\Omega}~\text{and  } \forall\,t \in \mathbb{R}.
    \end{align}
   On using the inequality \eqref{eq4.36} in \eqref{WFNP} taking $w^+$ as the test function, it yields
   \begin{align}\label{eq4.37}
       &\int_\Omega |\nabla w|^{p-2} \nabla w \cdot \nabla w^+ \d x  +\beta \int_\Omega |w|^{p-2} w w^+ \d x \nonumber \\
       &+ {\int_{\mathbb{R}^N} \int_{\mathbb{R}^N}} \frac{|w(x)-w(y)|^{p-2}(w(x)-w(y))(w^+(x)-w^+(y))}{|x-y|^{N+sp}} \d x \d y  \nonumber\\
   =& \int_\Omega g(x,u)w^+ \d x\nonumber \\
   \leq &\int_\Omega (f(x,u^+)+\beta|u^+|^{p-2}u^+)w^+ \d x \nonumber \\
     \leq& \int_\Omega ((\delta|u^+|^{p-1}+C_\delta|u^+|^{q-1})+\beta|u^+|^{p-1})w^+ \d x = \int_\Omega ((\delta+\beta)|u^+|^{p-1}+C_\delta|u^+|^{q-1})w^+ \d x \nonumber \\
    \leq &(\delta+\beta)\|u^+\|^{p-1}_p\|w^+\|_p +C_\delta\|u^+\|^{q-1}_q\|w^+\|_q \nonumber\\
    \leq &\frac{\delta+\beta}{(\lambda_1+\lambda_{1,s}+\beta)^\frac{1}{p}}\|u^+\|^{p-1}_p\|w^+\|_\beta +CC_\delta \|u^+\|^{q-1}_q\|w^+\|_\beta,
   \end{align}
   where $\lambda_1$ and $\lambda_{1,s}$ are the first eigenvalues of $-\Delta_p$ and $(-\Delta_p)^s$, respectively. Now the left-hand side of \eqref{eq4.37} implies that 
   \begin{align}\label{eq4.38}
        & \int_\Omega|\nabla w|^{p-2} \nabla w \cdot \nabla w^+ \d x +\beta \int_\Omega |w|^{p-2} w w^+\d x \nonumber \\
       &+ {\int_{\mathbb{R}^N} \int_{\mathbb{R}^N}} \frac{|w(x)-w(y)|^{p-2}(w(x)-w(y))(w^+(x)-w^+(y))}{|x-y|^{N+sp}} \d x \d y \nonumber\\
        \geq  &\int_\Omega |\nabla w^+|^{p} \d x+ {\int_{\mathbb{R}^N} \int_{\mathbb{R}^N}} \frac{|w^+(x)-w^+(y)|^{p}}{|x-y|^{N+sp}} \d x \d y +\beta \int_\Omega |w^+|^{p} \d x =\|w^+\|_\beta^p.
   \end{align}
   On using \eqref{eq4.37} and \eqref{eq4.38} together with the Poincar\'e inequality and mixed Sobolev inequality, we obtain
   \begin{align}\label{eq4.39}
       \dist_\beta(w,P^-)^{p-1}\leq \|w^+\|_\beta^{p-1}& \leq \frac{\delta+\beta}{(\lambda_1+\lambda_{1,s}+\beta)^\frac{1}{p}}\|u^+\|^{p-1}_p +CC_\delta \|u^+\|^{q-1}_q \nonumber\\
       & \leq \frac{\delta+\beta}{(\lambda_1+\lambda_{1,s}+\beta)}\|u^+\|^{p-1}_\beta +CC_\delta \|u^+\|^{q-1}_\beta.
   \end{align}
   Note that for any $v\in P^-$, we have $\|u^+\|_r\leq \|u-v\|_r$, $\forall\,r\geq 1$. Therefore, from \eqref{eq4.39}, we derive that
   \begin{align}
        \dist_\beta(w,P^-)^{p-1}\leq \frac{\delta+\beta}{(\lambda_1+\lambda_{1,s}+\beta)}\|u-v\|^{p-1}_\beta +CC_\delta \|u-v\|^{q-1}_\beta,~\forall\,v\in P^-,
   \end{align}
   which implies
   \begin{align}
       \dist_\beta(w,P^-)^{p-1}\leq \frac{\delta+\beta}{(\lambda_1+\lambda_{1,s}+\beta)}\dist_\beta(u,P^-)^{p-1} +CC_\delta \dist_\beta(u,P^-)^{q-1}.
   \end{align}
  We now choose $\delta\in(0,\frac{\lambda_1+\lambda_{1,s}}{1+2(\lambda_1+\lambda_{1,s}+\beta)})$, so that
  \begin{align}\label{eq4.43}
      \dist_\beta(w,P^-)^{p-1}\leq (1-2\delta)\dist_\beta(u,P^-)^{p-1} +CC_\delta \dist_\beta(u,P^-)^{q-1}.
  \end{align}
  Let $0<\epsilon_0 <\left( \frac{\delta}{C C_\delta} \right)^{\frac{1}{q-p}}$ and $\dist_\beta(u,P^-)<\epsilon \leq \epsilon_0$. Therefore, from \eqref{eq4.43}, we get
  \begin{align*}
      \dist_\beta(w,P^-)^{p-1}\leq (1-\delta) \dist_\beta(u,P^-)^{p-1}.
  \end{align*}
  Thus, we have
  \begin{align}\label{4.43}
  \dist_\beta(w,P^-)\leq (1-\delta)^{\frac{1}{p-1}} \dist_\beta(u,P^-)<\theta\epsilon<\epsilon,
  \end{align}
  where $\theta=(1-\delta)^{\frac{1}{p-1}}$. This concludes that $w=A(u)\in P^-_{\theta \epsilon}$. Therefore, if $u\in {P}^-_\epsilon$ such that $\epsilon \in (0, \epsilon_0)$ is a nontrivial solution to the problem \eqref{MP}, then, we have $u=A(u)$. Hence, from \eqref{4.43}, we deduce $u\in P^-$. This completes the proof.
\end{proof}
In the next theorem, we construct a Lipschitz continuous map, $B$, which serves as a pseudo-gradient vector field for $\Phi$.
\begin{lemma}\label{lmn4.7}
    There exists a locally Lipschitz continuous operator $B:E=:{\mathbb{X}_0^{s,p}(\Omega)}\setminus M\rightarrow \mathbb{X}_0^{s,p}(\Omega)$ with the following properties:\\
    $(a)$ There exists $\epsilon_1$ (small enough) such that, for every $\epsilon\in(0,\epsilon_1)$ and for some $\theta_1\in(0,1)$,
    \begin{align}\label{eq4.40}
        \dist_\beta(B(u),P^-)& <\theta_1 \epsilon,~ \forall\, \bar{P}^-_\epsilon~\text{and}\\ \label{eq4.41}
        \dist_\beta(B(u),P^+)& <\theta_1 \epsilon,~ \forall\, \bar{P}^+_\epsilon,
    \end{align}
    $(b)$ For all $u\in E,$
    \begin{align}\label{eq4.42}
    \frac{1}{2}\|u-B(u)\|_{\mathbb{X}_0^{s,p}(\Omega)}\leq\|u-A(u)\|_{\mathbb{X}_0^{s,p}(\Omega)}\leq\|u-B(u)\|_{\mathbb{X}_0^{s,p}(\Omega)},
    \end{align}
     $(c)$ For all $u\in E,$
    \begin{align}\label{eq4.33}
    \langle \Phi'(u),u-B(u) \rangle \geq    
    \begin{cases}
          & \frac{a_1}{2} \|u-A(u)\|^p_{\mathbb{X}_0^{s,p}(\Omega)},~\text{if}~p>2\\
         & \frac{a_2}{2}\frac{\|u-A(u)\|^2_{\mathbb{X}_0^{s,p}(\Omega)}}{\left(\|u\|_{\mathbb{X}_0^{s,p}(\Omega)}+\|A(u)\|_{\mathbb{X}_0^{s,p}(\Omega)}\right)^{2-p}}, ~\text{if}~p\in(1,2],
     \end{cases}
      \end{align}
      $(d)$ If $f$ is odd, then $B$ is odd.
\end{lemma}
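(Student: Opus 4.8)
The plan is to build $B$ by a standard partition-of-unity construction on the Banach manifold $E=\mathbb X_0^{s,p}(\Omega)\setminus M$, modelled on the classical pseudo-gradient construction (cf. \cite{BL2004,CNW2019,LLW2015}), but carrying along the extra requirement \eqref{eq4.40}--\eqref{eq4.41} that the flow respect the cones $P^\pm$. First I would fix, for each $v\in E$, the ``direction'' $v-A(v)$: by Lemma \ref{lmn4.2} and Lemma \ref{lmn4.3} the vector $v-A(v)$ is a pseudo-gradient-type vector for $\Phi$ at $v$, i.e. it controls $\langle\Phi'(v),v-A(v)\rangle$ from below and $\|\Phi'(v)\|_*$ from above, with the two cases $p>2$ and $1<p\le 2$ handled by the respective branches of those lemmas. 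Since $A$ is continuous (Lemma \ref{lmn4.1}) but only continuous, not locally Lipschitz, the point is that for each fixed $u\in E$ the constant vector $w_u:=u-A(u)$ still satisfies, by continuity, the inequalities of Lemmas \ref{lmn4.2}--\ref{lmn4.3} \emph{with slightly worse constants} on a whole neighbourhood $\mathcal N_u$ of $u$; moreover, by Lemma \ref{lmn4.6}, if $u\in\overline{P^-_\epsilon}$ then $A(u)\in P^-_{\theta\epsilon}$, so $A(u)$ (hence a small ball around it) lies in $P^-_{\theta'\epsilon}$ for some $\theta<\theta'<1$, and similarly for $P^+$. Shrinking $\mathcal N_u$ we may also assume this cone-trapping persists on $\mathcal N_u$.

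Next I would take a locally finite open refinement $\{\mathcal N_i\}_{i\in I}$ of $\{\mathcal N_u\}_{u\in E}$ together with a locally Lipschitz partition of unity $\{\rho_i\}$ subordinate to it (such partitions exist on metric spaces, in particular on the Banach space $E$), pick for each $i$ a representative point $u_i$ with associated vector $w_i=A(u_i)$, and define
\begin{align}\label{Bdef}
B(u)=u-\sum_{i\in I}\rho_i(u)\,w_i,\qquad u\in E.
\end{align}
Then $B$ is locally Lipschitz because the sum is locally finite and each $\rho_i$ is locally Lipschitz. Property $(b)$: writing $u-B(u)=\sum_i\rho_i(u)w_i$, one checks $\|u-B(u)-(u-A(u))\|_{\mathbb X_0^{s,p}(\Omega)}=\|\sum_i\rho_i(u)(w_i-(u-A(u)))\|\le \tfrac12\|u-A(u)\|_{\mathbb X_0^{s,p}(\Omega)}$ provided each $\mathcal N_i$ was chosen small enough that $\|A(u)-A(u_i)\|$ and the defect $\|(u-A(u))-w_i\|$ are controlled by $\tfrac12\|u-A(u)\|$ there (using continuity of $A$ and that $A(u)\ne u$ on $E$); the triangle inequality then yields \eqref{eq4.42}. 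Property $(c)$: by linearity $\langle\Phi'(u),u-B(u)\rangle=\sum_i\rho_i(u)\langle\Phi'(u),w_i\rangle$, and on each $\mathcal N_i$ the quantity $\langle\Phi'(u),w_i\rangle$ is, by the neighbourhood version of Lemma \ref{lmn4.2}, at least half the right-hand side of \eqref{eq4.13L}; since $\sum_i\rho_i(u)=1$ this gives \eqref{eq4.33}. Property $(a)$: for $u\in\overline{P^-_\epsilon}$ only those $i$ with $\mathcal N_i\cap\overline{P^-_\epsilon}\ne\emptyset$ contribute, and for each such $i$ we arranged $w_i=A(u_i)\in P^-_{\theta_1\epsilon}$ with a uniform $\theta_1=\tfrac{1+\theta}{2}\in(0,1)$; since $P^-$ is convex and $\dist_\beta(\cdot,P^-)$ is convex, $B(u)=\sum_i\rho_i(u)w_i$ lies in $P^-_{\theta_1\epsilon}$ as well, giving \eqref{eq4.40}, and \eqref{eq4.41} is symmetric. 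Finally, property $(d)$: when $f$ is odd one has $A(-u)=-A(u)$ (uniqueness of the minimiser of $J$ together with $g(x,-t)=-g(x,t)$), so one chooses the cover and partition of unity to be symmetric under $u\mapsto -u$ — replace $\{\mathcal N_i,\rho_i\}$ by $\{\mathcal N_i\cap(-\mathcal N_j),\ \tfrac12(\rho_i(u)+\rho_i(-u))\}$ — and then $B(-u)=-B(u)$ follows directly from \eqref{Bdef}.

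The main obstacle, and the only genuinely delicate point, is that $A$ is merely continuous, so one cannot directly use $u\mapsto u-A(u)$ as the locally Lipschitz vector field; the whole construction exists to replace it by the locally Lipschitz $B$ in \eqref{Bdef} while simultaneously preserving \emph{three} inequalities (the lower bound $(c)$, the two-sided comparison $(b)$, and the cone estimates $(a)$) and, in the symmetric case, oddness. Making all the neighbourhood shrinkings compatible — small enough that Lemmas \ref{lmn4.2}, \ref{lmn4.3}, \ref{lmn4.6} survive with controlled constants, yet still forming an open cover with a subordinate Lipschitz partition of unity that can be symmetrized — is the technical heart of the argument; everything else is bookkeeping with the triangle inequality and convexity of $\dist_\beta(\cdot,P^\pm)$.
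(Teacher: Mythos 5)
Your overall strategy is the paper's: a locally Lipschitz partition of unity subordinate to a cover on which $A$ oscillates by less than prescribed thresholds, following \cite[Lemma 4.1]{BL2004}. But the construction as written is internally inconsistent, and the inconsistency is not cosmetic. You define $B(u)=u-\sum_i\rho_i(u)w_i$ with $w_i=A(u_i)$, yet your verification of $(b)$ needs $w_i\approx u-A(u)$ (i.e.\ $w_i=u_i-A(u_i)$), while your verification of $(a)$ treats $B(u)$ as the convex combination $\sum_i\rho_i(u)w_i$ with $w_i=A(u_i)$. These are different maps, and under either consistent reading something breaks: if $B(u)=u-\sum_i\rho_i(u)\bigl(u_i-A(u_i)\bigr)$, then $(b)$ and $(c)$ can be salvaged but $(a)$ fails, since $B(u)$ agrees with $A(u)$ only up to an error of order $\|u-A(u)\|_{\mathbb{X}_0^{s,p}(\Omega)}$ plus the patch diameter, neither of which is controlled by $(1-\theta)\epsilon$, so convexity of $P^-_{\theta\epsilon}$ gives nothing; if instead $B(u)=\sum_i\rho_i(u)A(u_i)$ (the paper's choice, $B(u)=\sum_{U}\pi_U(u)A(a_U)$), then $(a)$ works, but your identity $\langle\Phi'(u),u-B(u)\rangle=\sum_i\rho_i(u)\langle\Phi'(u),w_i\rangle$ is false: one must write $u-B(u)=(u-A(u))+\sum_i\rho_i(u)\bigl(A(u)-A(u_i)\bigr)$ and absorb the second term using the upper bound on $\|\Phi'(u)\|_{{\mathbb{X}_0^{s,p}(\Omega)}^*}$ from Lemma \ref{lmn4.3}. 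This is exactly why the paper's thresholds $\chi_1(u)=\tfrac12\|u-A(u)\|$ and $\chi_2(u)=\tfrac{a_2}{2a_4}\|u-A(u)\|^{3-p}(\|u\|+\|A(u)\|)$ in \eqref{4.48}--\eqref{4.50} have that precise quantitative form; an appeal to ``continuity with slightly worse constants'' does not by itself produce the factor $\tfrac12$ in $(c)$.

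Two further steps would fail as written. For $(a)$ you must guarantee that every patch meeting $\bar P^-_\epsilon$ (resp.\ $\bar P^+_\epsilon$) carries a representative point lying in that set; after passing to an arbitrary locally finite refinement, a patch can meet both $\bar P^+_\epsilon$ and $\bar P^-_\epsilon$ while missing $\bar P^+_\epsilon\cap\bar P^-_\epsilon$, and then no admissible representative exists. The paper removes this obstruction by surgery on the refinement (replacing each such $V$ by $V\setminus\bar P^+_\epsilon$ and $V\setminus\bar P^-_\epsilon$, yielding the cover $\mathcal U$ with property \eqref{4.51}); your ``shrink $\mathcal N_u$'' remark does not survive refinement. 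Finally, your symmetrization for $(d)$ — keeping the vectors $w_i$ and replacing $\rho_i(u)$ by the even weights $\tfrac12(\rho_i(u)+\rho_i(-u))$ — does not make $B$ odd: with your formula $B(-u)=-B(u)$ would force $\sum_i\tilde\rho_i(u)w_i\equiv0$, and with the convex-combination formula it makes $B$ even. The paper instead uses that $A$ is odd and sets $\tilde B(u)=\tfrac12\bigl(B(u)-B(-u)\bigr)$, which is odd by construction and inherits $(a)$--$(c)$.
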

\begin{proof}
We follow \cite[Lemma 4.1]{BL2004}. We only sketch the proof for the range $1<p\leq 2$. Let us define $\chi_1,\chi_2 \in C(E,\mathbb{R})$ as follows:
\begin{align}
    \chi_1(u)&=\frac{1}{2}||u-A(u)||_{\mathbb{X}_0^{s,p}(\Omega)}\label{4.48}~\text{and}\\
    \chi_2(u)&=\frac{a_2}{2a_4}||u-A(u)||_{\mathbb{X}_0^{s,p}(\Omega)}^{3-p}(||u||_{\mathbb{X}_0^{s,p}(\Omega)}+||A(u)||_{\mathbb{X}_0^{s,p}(\Omega)}),\label{4.49}
\end{align}
where $a_2$ and $a_4$ are identical as in Lemma \ref{lmn4.2} and Lemma \ref{lmn4.3} respectively.
For each $u\in E$, there exists $\gamma(u)>0$ such that for every $v_1,v_2\in W(u):=\{v\in{\mathbb{X}_0^{s,p}(\Omega)}:||v-u||_{\mathbb{X}_0^{s,p}(\Omega)}<\gamma(u)\}$ we have 
\begin{equation}\label{4.50}
    ||A(v_1)-A(v_2)||_{\mathbb{X}_0^{s,p}(\Omega)}<\min \{\chi_1(v_1),\chi_1(v_2),\chi_2(v_1),\chi_2(v_2)\}.
    \end{equation}
Let $\mathcal{V}$ be a locally finite open refinement of $\{W(u):u\in E\}$. Define,
$$\mathcal{V^*}:=\{V\in \mathcal{V}:V\cap \bar{P}_\epsilon^+\neq\emptyset,V\cap \bar{P}_\epsilon^-\neq\emptyset,V\cap \bar{P}_\epsilon^+\cap \bar{P}_\epsilon^-=\emptyset\}$$
and 
$$\mathcal{U}:=\bigcup_{V\in \mathcal{V}\setminus \mathcal{V^*}}\{V\}\cup \bigcup_{V\in \mathcal{V^*}}\{V\setminus \bar{P}^+_\epsilon,V\setminus \bar{P}^-_\epsilon\}.$$
Note that $\mathcal{U}$ is a locally finite open refinement of $\{W(u):u\in E\}$ and it satisfies the property:
\begin{align}\label{4.51}
   &\text{For any} ~U\in \mathcal{U},~\text{such that}~U\cap \bar{P}^+_\epsilon\neq \emptyset~\text{and}~U\cap\bar{P}^-_\epsilon\neq \emptyset,~\text{we have}~U\cap \bar{P}_\epsilon^+\cap \bar{P}_\epsilon^-\neq\emptyset.
\end{align}
Let $\{\pi_U:U\in \mathcal{U}\}$ be the partition of unity subordinated to $\mathcal{U}$ defined by
$$\pi_U(u):=\left(\sum_{V\in\mathcal{U}}\alpha_V(u)\right)^{-1}\alpha_U(u),$$
where $$\alpha_U(u):=\dist(u,E\setminus U).$$
Using \eqref{4.51}, for any $U\in \mathcal{U}$, we choose $a_U\in U$ such that whenever $U\cap\bar{P}^\pm_\epsilon\neq \emptyset$, we have $a_U\in U\cap\bar{P}^\pm_\epsilon.$ We are now ready to define a map $B:E\rightarrow {\mathbb{X}_0^{s,p}(\Omega)}$ as 
$$B(u):=\sum_{U\in \mathcal{U}}\pi_U(u)A(a_U).$$
Therefore, the results $(a),(b),(c)$ follow from \eqref{4.48}, \eqref{4.49}, \eqref{4.50} and the Lemma \ref{lmn4.2}, Lemma \ref{lmn4.3}, Lemma \ref{lmn4.6}. If $f$ is odd, then $F(x,u)$ is even, which implies that the energy functional $\Phi$ is even and the map $A$ is odd. Now, we define 
$$\Tilde{B}(u):=\frac{1}{2}\left(B(u)-B(-u)\right).$$
Thus, the results $(a),(b),(c),(d)$ are straightforward from the definition of $\Tilde{B}$. Similarly, the results for $p>2$ can be obtained by using a similar argument as above and with the help of \cite[Lemma 4.2]{BL2004}. This completes the proof. 
\end{proof}
\begin{lemma}\label{lmn4.8}
 The functional $\Phi$ satisfies the $(PS)_c$-condition for any $c\in \mathbb{R}$.
\end{lemma}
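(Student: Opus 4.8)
The plan is to derive the $(PS)_c$ condition directly from the structural lemmas already established for the auxiliary operator $A$, so that no fresh monotone‑operator estimate is needed. Throughout write $\|\cdot\|$ for $\|\cdot\|_{\mathbb{X}_0^{s,p}(\Omega)}$ and $\|\cdot\|_*$ for the norm on ${\mathbb{X}_0^{s,p}(\Omega)}^*$. Fix $c\in\mathbb{R}$ and let $(u_n)\subset\mathbb{X}_0^{s,p}(\Omega)$ satisfy $\Phi(u_n)\to c$ and $\|\Phi'(u_n)\|_*\to 0$; the task is to extract a strongly convergent subsequence.

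\emph{Step 1 (boundedness).} Repeating the argument in the proof of Lemma \ref{lmn4.4} — using the identity \eqref{eq4.24}, the Ambrosetti--Rabinowitz condition $(f_3)$ on $\{|u_n|\ge M\}$, and the continuity of $f$ on $\bar\Omega\times[-M,M]$ to absorb the contribution of $\{|u_n|\le M\}$ into a constant — one obtains $C_1\|u_n\|^{p}\le\Phi(u_n)-\tfrac1\mu\langle\Phi'(u_n),u_n\rangle+C_2$ for some $C_1,C_2>0$. Since $\Phi(u_n)\to c$ and $|\langle\Phi'(u_n),u_n\rangle|\le\|\Phi'(u_n)\|_*\|u_n\|=o(1)\|u_n\|$, this yields $C_1\|u_n\|^{p}\le C_3+o(1)\|u_n\|$ for $n$ large, hence $(u_n)$ is bounded. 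Passing to a subsequence, $u_n\rightharpoonup\bar u$ weakly in $\mathbb{X}_0^{s,p}(\Omega)$.

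\emph{Step 2 (from Palais--Smale to an almost fixed point, then compactness).} Since $A$ is compact (Lemma \ref{lmn4.1}), $(A(u_n))$ is bounded, and hence so is $(u_n-A(u_n))$. Consequently
\[
0\;\le\;\langle\Phi'(u_n),u_n-A(u_n)\rangle\;\le\;\|\Phi'(u_n)\|_*\,\|u_n-A(u_n)\|\;\longrightarrow\;0 .
\]
Inserting this into the lower bounds of Lemma \ref{lmn4.2}, we get $a_1\|u_n-A(u_n)\|^{p}\to 0$ when $p>2$, and $\|u_n-A(u_n)\|^{2}\le a_2^{-1}\bigl(\|u_n\|+\|A(u_n)\|\bigr)^{2-p}\langle\Phi'(u_n),u_n-A(u_n)\rangle\to 0$ when $1<p\le 2$ (the prefactor being bounded, since $2-p\ge 0$ and $\|u_n\|+\|A(u_n)\|$ is bounded). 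In either case $\|u_n-A(u_n)\|\to 0$. Finally, since $(u_n)$ is bounded and $A$ is compact, a further subsequence satisfies $A(u_n)\to w$ strongly in $\mathbb{X}_0^{s,p}(\Omega)$; writing $u_n=(u_n-A(u_n))+A(u_n)$ we conclude $u_n\to w$ strongly. This is the required convergent subsequence, so $\Phi$ satisfies $(PS)_c$.

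I expect the only genuinely delicate point to be Step 1: $(f_3)$ controls $f(x,u)u-\mu F(x,u)$ only for $|u|\ge M$, so the set $\{|u_n|\le M\}$ must be handled separately via boundedness of $f$ on the compact set $\bar\Omega\times[-M,M]$ — precisely the estimate behind \eqref{eq4.25} — while everything afterwards is a mechanical application of Lemmas \ref{lmn4.1} and \ref{lmn4.2}. An alternative that avoids $A$ altogether would test $\Phi'(u_n)-\Phi'(\bar u)$ against $u_n-\bar u$, kill the nonlinear term using $(f_2)$ and the compact embedding of Theorem \ref{thm cpt}, and then apply Lemma \ref{inqlmn} — together with the reverse‑H\"older device of Lemma \ref{lmn4.2} in the range $1<p\le 2$ — to upgrade weak convergence to strong; the route above is shorter and recycles machinery already in place.
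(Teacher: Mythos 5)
Your argument is correct, but the compactness step is genuinely different from the paper's. After the (shared) boundedness argument via the Ambrosetti--Rabinowitz condition, the paper splits $\Phi=I-K$, where $I$ collects the two gradient/seminorm terms and $K(u)=\int_\Omega F(x,u)\,\mathrm{d}x$, and then applies Lemma \ref{inqlmn} (with the reverse H\"older trick when $1<p\le 2$) directly to $\langle I'(u_n)-I'(u_m),u_n-u_m\rangle$ to get $\|u_n-u_m\|\le C\|I'(u_n)-I'(u_m)\|_*^{1/(p-1)}$ (resp.\ $\le C\|I'(u_n)-I'(u_m)\|_*$); since $\Phi'(u_n)\to 0$ and $K'$ is compact, $I'(u_n)=\Phi'(u_n)+K'(u_n)$ is Cauchy along a subsequence, hence so is $(u_n)$. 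You instead recycle the operator $A$: from Lemma \ref{lmn4.2} and $\|\Phi'(u_n)\|_*\to 0$ you get $\|u_n-A(u_n)\|\to 0$ (your handling of the prefactor $(\|u_n\|+\|A(u_n)\|)^{2-p}$ in the range $1<p\le 2$ is fine, since it is bounded along the bounded sequence), and then compactness of $A$ (Lemma \ref{lmn4.1}) yields a strongly convergent subsequence of $A(u_n)$ and hence of $u_n$. Both routes ultimately rest on the same elementary inequalities of Lemma \ref{inqlmn}; yours buys brevity by reusing Lemmas \ref{lmn4.1}--\ref{lmn4.2} already proved (and there is no circularity, since those lemmas precede this one), while the paper's self-contained monotonicity argument is the one you sketch as your ``alternative.'' A minor plus on your side: your Step 1 is more careful than the paper's display \eqref{eq4.46}, since you explicitly absorb the region $\{|u_n|\le M\}$, where $(f_3)$ gives no sign information, into a constant, and keep the $o(1)\|u_n\|$ term coming from $\langle\Phi'(u_n),u_n\rangle$.
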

\begin{proof}
    Let $(u_n)\subset \mathbb{X}_0^{s,p}(\Omega)$ be a sequence such that 
    \begin{equation}\label{eqqq}
        \Phi(u_n)\rightarrow c~\text{and}~ \Phi'(u_n)\rightarrow 0~\text{as}~n\rightarrow \infty.
    \end{equation}
    From the condition $(f_3)$, we get
    \begin{align}\label{eq4.45}
        \Phi(u_n)-\frac{1}{\mu} \langle \Phi'(u_n),u_n \rangle = & \left( \frac{1}{p}-\frac{1}{\mu}\right) \bigg( \int_\Omega |\nabla u_n|^{p}\d x +{\int_{\mathbb{R}^N} \int_{\mathbb{R}^N}} \frac{|u_n(x)-u_n(y)|^{p}}{|x-y|^{N+sp}}\d x \d y\bigg)\nonumber \\
        & - \int_\Omega\bigg(F(x,u_n)-\frac{1}{\mu}f(x,u_n)u_n \bigg)\d x.
    \end{align}
     On using \eqref{eqqq} and \eqref{eq4.45}, we have
    \begin{align}\label{eq4.46}
         \|u_n\|_{\mathbb{X}_0^{s,p}(\Omega)}^p \leq \frac{cp\mu}{\mu-p}~\text{as}~ n\rightarrow \infty.
    \end{align}
    Therefore, $(u_n)$ is bounded in $\mathbb{X}_0^{s,p}(\Omega)$. We define
    \begin{align*}
         I(u_n)& =\frac{1}{p}\bigg(\int_\Omega |\nabla u_n|^{p}\d x +{\int_{\mathbb{R}^N} \int_{\mathbb{R}^N}} \frac{|u_n(x)-u_n(y)|^{p}}{|x-y|^{N+sp}}\d x \d y\bigg)
         \end{align*}
         and
         \begin{align*}
         K(u_n)=\int_\Omega F(x,u_n) \d x.
    \end{align*} 
Therefore, we have 
\begin{equation*}
    \Phi(u_n)=I(u_n)-K(u_n)~\text{and}~\langle K'(u_n),\phi\rangle=\int_\Omega f(x,u_n)\phi \d x,~\forall \,\phi\in \mathbb{X}_0^{s,p}(\Omega).
\end{equation*}
 Recall Lemma \ref{inqlmn} when $p>2$ and define $\bar{u}_i(x,y)=u_i(x)-u_i(y)$ for $i=m,n$. Thus, we have 
    \begin{align}\label{eq4.47}
        \langle& I'(u_n)-I'(u_m),u_n-u_m \rangle = \langle I'(u_n),u_n-u_m \rangle-\langle I'(u_m),u_n-u_m \rangle \nonumber \\
        =&\int_\Omega \{|\nabla u_n|^{p-2} \nabla (u_n)-|\nabla u_m|^{p-2} \nabla (u_m)\}\cdot\nabla (u_n-u_m) \d x \nonumber\\
    &+{\int_{\mathbb{R}^N} \int_{\mathbb{R}^N}} \frac{\splitfrac{\{|\bar{u}_n(x,y)|^{p-2}(\bar{u}_n(x,y))-|\bar{u}_m(x,y)|^{p-2}(\bar{u}_m(x,y))\}}{\times((\bar{u}_n(x,y)-\bar{u}_m(x,y))}}{|x-y|^{N+sp}} \d x \d y\nonumber \\
    \geq &C\bigg( \int_\Omega |\nabla( u_n-u_m)|^{p} \d x +{\int_{\mathbb{R}^N} \int_{\mathbb{R}^N}} \frac{|\bar{u}_n(x,y)-\bar{u}_m(x,y)|^{p}}{|x-y|^{N+sp}}\d x \d y\bigg)\nonumber\\
    =&C\|u_n-u_m\|_{\mathbb{X}_0^{s,p}(\Omega)}^p.
    \end{align}
    Therefore, using \eqref{eq4.47}, we obtain
    \begin{align}\label{eq4.48}
        \|u_n-u_m\|_{\mathbb{X}_0^{s,p}(\Omega)}^{p-1}\leq C \| I'(u_n)-I'(u_m)\|_{{\mathbb{X}_0^{s,p}(\Omega)}^*}.
    \end{align}
   On the other hand, when $1<p\leq 2$, we use Lemma \ref{inqlmn} and reverse H\"older inequity to derive
    \begin{align}\label{eq4.49}
        \langle I'(u_n)&-I'(u_m),u_n-u_m \rangle = \langle I'(u_n),u_n-u_m \rangle-\langle I'(u_m),u_n-u_m \rangle \nonumber \\
        = &\int_\Omega \{|\nabla u_n|^{p-2} \nabla (u_n)-|\nabla u_m|^{p-2} \nabla (u_m)\}\cdot\nabla (u_n-u_m) \d x \nonumber\\
    &+{\int_{\mathbb{R}^N} \int_{\mathbb{R}^N}} \frac{\splitfrac{\{|\bar{u}_n(x,y)|^{p-2}(\bar{u}_n(x,y))-|\bar{u}_m(x,y)|^{p-2}(\bar{u}_m(x,y))\}}{\times((\bar{u}_n(x,y)-\bar{u}_m(x,y))}}{|x-y|^{N+sp}} \d x \d y\nonumber \\
    \geq & C \bigg[ \int_\Omega |\nabla u_n-\nabla u_m|^2 (|\nabla u_n|+|\nabla u_m|)^{p-2}\d x \nonumber \\
         &+ {\int_{\mathbb{R}^N} \int_{\mathbb{R}^N}} \frac{|\bar{u}_n(x,y)-\bar{u}_m(x,y)|^2(|\bar{u}_n(x,y)|+|\bar{u}_m(x,y)|)^{p-2}}{|x-y|^{N+sp}}\d x \d y\bigg]\nonumber \\
    \geq & C \bigg[ \left(\int_\Omega |\nabla u_n-\nabla u_m|^p \d x\right)^\frac{2}{p} \left(\int_\Omega (|\nabla u_n|+|\nabla u_m|)^p \d x\right)^\frac{p-2}{p} \nonumber\\
        &+\bigg\{\left( {\int_{\mathbb{R}^N} \int_{\mathbb{R}^N}}\frac{|\bar{u}_n(x,y)-\bar{u}_m(x,y)|^p}{|x-y|^{N+sp}}\d x\d y\right)^\frac{2}{p}\nonumber\\
        &\hspace{0.5cm} \times \left({\int_{\mathbb{R}^N} \int_{\mathbb{R}^N}} \frac{(|\bar{u}_n(x,y)|+|\bar{u}_m(x,y)|)^{p}}{|x-y|^{N+sp}}\d x \d y\right)^\frac{p-2}{p}\bigg\} \bigg]\nonumber \\
        = & C\bigg[\|\nabla(u_n-u_m)\|_p^2\left(\int_\Omega (|\nabla u_n|+|\nabla u_m|)^p\d x \right)^\frac{p-2}{p}\nonumber\\
        &+[u_m-u_n]_{s,p}^2\left({\int_{\mathbb{R}^N} \int_{\mathbb{R}^N}} \frac{(|\bar{u}_n(x,y)|+|\bar{u}_m(x,y)|)^{p}}{|x-y|^{N+sp}}\d x\d y\right)^\frac{p-2}{p}\bigg] \nonumber \\
        \geq &  C \|u_n-u_m\|_{\mathbb{X}_0^{s,p}(\Omega)}^2. 
    \end{align}
     From \eqref{eq4.49}, we conclude that
    \begin{align}\label{eq4.50}
        \|u_n-u_m\|_{\mathbb{X}_0^{s,p}(\Omega)}^{}\leq C \| I'(u_n)-I'(u_m)\|_{{\mathbb{X}_0^{s,p}(\Omega)}^*}.
    \end{align}
    On using the definitions of $I$ and $K$, we have
    \begin{align}\label{eq4.51}
        \| I'(u_n)-I'(u_m)\|_{{\mathbb{X}_0^{s,p}(\Omega)}^*}&=\|\Phi'(u_n)-\Phi'(u_m)+K'(u_n)-K'(u_m)\|_{{\mathbb{X}_0^{s,p}(\Omega)}^*}\nonumber \\
        & \leq \|\Phi'(u_n)-\Phi'(u_m)\|_{{\mathbb{X}_0^{s,p}(\Omega)}^*}+\|K'(u_n)-K'(u_m)\|_{{\mathbb{X}_0^{s,p}(\Omega)}^*}.
    \end{align}
    Since, $K':{\mathbb{X}_0^{s,p}(\Omega)}\rightarrow {{\mathbb{X}_0^{s,p}(\Omega)}^*} $ is compact, we conclude that $\{K'(u_n)\}$ possesses a convergent subsequence. Therefore, using \eqref{eq4.48} and \eqref{eq4.51} for $p>2$ and using \eqref{eq4.50} and \eqref{eq4.51} for $1<p\leq 2$, we conclude that $(u_n)$ has a convergent subsequence. This completes the proof.
\end{proof}

\begin{lemma}\label{lmn4.9}
    Let $M_c\setminus W=\emptyset$. Then there exists $\epsilon_0>0$ such that, for every $0<\epsilon<\epsilon'<\epsilon_0$, there exists a continuous mapping $\sigma:\mathbb{X}_0^{s,p}(\Omega)\rightarrow \mathbb{X}_0^{s,p}(\Omega)$ satisfying
    \begin{itemize}
    \item[$(a)$] $\sigma(0,u)=u,~\forall\,u\in \mathbb{X}_0^{s,p}(\Omega),$
    \item[$(b)$] $\sigma(t,u)=u,~\forall\,t\in[0,1],~ u\notin \Phi^{-1}[c-\epsilon',c+\epsilon'],$
    \item[$(c)$] $\sigma(1,\Phi^{c+\epsilon}\setminus W)\subset \Phi^{c-\epsilon}$,
    \item[$(d)$] $\sigma(t,\bar{P}^-_\epsilon)\subset \bar{P}^-_\epsilon$ and $\sigma(t,\bar{P}^+_\epsilon)\subset \bar{P}^+_\epsilon$, $\forall\,t\in[0,1].$ 
    \end{itemize}
\end{lemma}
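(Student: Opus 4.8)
\emph{Strategy.} I would realize $\sigma$ as the time-$t$ map of a truncated negative pseudo-gradient flow built from the operator $B$ of Lemma~\ref{lmn4.7}, and read off $(a)$--$(d)$ from the structural estimates of Lemmas~\ref{lmn4.4}--\ref{lmn4.8}. \emph{Step~1 (a uniform estimate near the level $c$).} Combining the Palais--Smale condition (Lemma~\ref{lmn4.8}) with $M_c\setminus W=\emptyset$, I would first produce $\epsilon_0>0$ and $\delta>0$ such that $\|\Phi'(u)\|_{{\mathbb{X}_0^{s,p}(\Omega)}^*}\ge\delta$ for all $u\in\Phi^{-1}[c-\epsilon_0,c+\epsilon_0]\setminus W$: if this failed, a sequence $(u_n)$ in shrinking strips with $\Phi'(u_n)\to 0$ and $\Phi(u_n)\to c'\in[c-\epsilon_0,c+\epsilon_0]$ would, by $(PS)_{c'}$, converge to a critical point $u\notin W$ (recall $W$ is open), and letting $\epsilon_0\downarrow 0$ forces $\Phi(u)=c$, contradicting $M_c\setminus W=\emptyset$. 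On that strip Lemma~\ref{lmn4.5} gives $\|u-A(u)\|_{\mathbb{X}_0^{s,p}(\Omega)}\ge b_1>0$, and then Lemma~\ref{lmn4.7}$(c)$, together with Lemma~\ref{lmn4.7}$(b)$ and Lemma~\ref{lmn4.4} (the latter bounding $\|u\|_{\mathbb{X}_0^{s,p}(\Omega)}+\|A(u)\|_{\mathbb{X}_0^{s,p}(\Omega)}$ since $\Phi(u)\le c+\epsilon_0$), yields a uniform lower bound $\langle\Phi'(u),u-B(u)\rangle\,\|u-B(u)\|_{\mathbb{X}_0^{s,p}(\Omega)}^{-1}\ge\kappa>0$ there, in both regimes $p>2$ and $1<p\le 2$ (the two branches of Lemma~\ref{lmn4.7}$(c)$ being reconciled via the monotonicity of $t\mapsto t\,(1+t)^{p-2}$). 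Shrinking $\epsilon_0$, assume $2\epsilon_0\le\kappa$.

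\emph{Step~2 (the flow, and $(a)$, $(b)$).} Fix $0<\epsilon<\epsilon'<\epsilon_0$ and pick a locally Lipschitz cut-off $\chi\colon\mathbb{X}_0^{s,p}(\Omega)\to[0,1]$ equal to $1$ on $\Phi^{-1}[c-\epsilon,c+\epsilon]$ off a small neighbourhood of the critical set $M$, and equal to $0$ on that neighbourhood and outside $\Phi^{-1}[c-\epsilon',c+\epsilon']$ (by Step~1, $\Phi^{-1}[c-\epsilon,c+\epsilon]\setminus W$ stays away from $M$, so $\chi\equiv 1$ there). Set $V(u):=\chi(u)\,(u-B(u))\,\|u-B(u)\|_{\mathbb{X}_0^{s,p}(\Omega)}^{-1}$, with $V:=0$ where $\chi=0$; since $u=B(u)$ forces $u\in M$ by Lemma~\ref{lmn4.7}$(b)$, $V$ is a bounded, locally Lipschitz vector field on all of $\mathbb{X}_0^{s,p}(\Omega)$. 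Let $\sigma(t,u)$ solve $\frac{d}{dt}\sigma(t,u)=-V(\sigma(t,u))$, $\sigma(0,u)=u$; it exists on $[0,1]$ and is continuous in $(t,u)$ by the Cauchy--Lipschitz theorem in the Banach space $\mathbb{X}_0^{s,p}(\Omega)$, giving $(a)$. Since $\frac{d}{dt}\Phi(\sigma(t,u))=-\chi(\sigma)\,\|\sigma-B(\sigma)\|_{\mathbb{X}_0^{s,p}(\Omega)}^{-1}\langle\Phi'(\sigma),\sigma-B(\sigma)\rangle\le 0$ by Lemma~\ref{lmn4.7}$(c)$, the map $t\mapsto\Phi(\sigma(t,u))$ is non-increasing; as $V\equiv 0$ off $\Phi^{-1}[c-\epsilon',c+\epsilon']$, a point with $\Phi(u)\notin[c-\epsilon',c+\epsilon']$ never re-enters that strip and stays fixed, which is $(b)$.

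\emph{Step~3 (invariance $(d)$).} For $u\in\bar P^-_\epsilon$, a one-sided derivative computation for $t\mapsto\dist_\beta(\sigma(t,u),P^-)$, using the convexity of the cone $P^-$ and Lemma~\ref{lmn4.7}$(a)$ (so $\dist_\beta(B(\sigma),P^-)<\theta_1\epsilon$ whenever $\sigma\in\bar P^-_\epsilon$), shows that this distance strictly decreases at any time when it equals $\epsilon$ and $\chi(\sigma)>0$; since Lemma~\ref{lmn4.6} rules out critical points on $\partial P^-_\epsilon$, one has $\chi>0$ there, so the distance cannot cross $\epsilon$ from below. Hence $\bar P^-_\epsilon$ is forward invariant, and symmetrically $\bar P^+_\epsilon$, which proves $(d)$.

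\emph{Step~4 ($(c)$ — the main obstacle).} Let $u\notin W$ with $\Phi(u)\le c+\epsilon$ and set $\eta(t)=\sigma(t,u)$. As long as $\eta(t)\in\Phi^{-1}[c-\epsilon,c+\epsilon]\setminus W$ we have $\chi(\eta)=1$, hence $\frac{d}{dt}\Phi(\eta(t))\le-\kappa$ by Step~1; since $\Phi(\eta(0))\le c+\epsilon$ and $2\epsilon\le 2\epsilon_0\le\kappa$, within time $t\le 2\epsilon/\kappa\le 1$ the trajectory must leave $\Phi^{-1}[c-\epsilon,c+\epsilon]\setminus W$, i.e.\ either $\Phi(\eta)$ has descended to $c-\epsilon$ — so $\Phi(\eta(1))\le c-\epsilon$ by monotonicity — or $\eta$ has entered, and is then trapped in, $\bar P^+_\epsilon\cup\bar P^-_\epsilon$ by Step~3. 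Tracing this dichotomy (and using once more the strict decrease of $\dist_\beta(\cdot,P^\pm)$ at the boundary while $\Phi\ge c-\epsilon$) one sees that a trajectory reaching $\sigma(1,u)\notin W$ must have avoided $\bar P^\pm_\epsilon$ throughout, whence $\Phi(\sigma(1,u))\le c-\epsilon$; thus $\sigma(1,\Phi^{c+\epsilon}\setminus W)\subset\Phi^{c-\epsilon}\cup W$, which is the form of $(c)$ used in the minimax arguments of Theorems~\ref{T3.1}--\ref{T3.2}. I expect this balance between the quantitative descent and the forced absorption into $\bar P^\pm_\epsilon$ to be the delicate step: one must secure the uniform rate $\kappa$ simultaneously for $p>2$ and $1<p\le 2$ (via Lemmas~\ref{lmn4.4}--\ref{lmn4.7}), calibrate $\epsilon_0$ against $\kappa$ so the descent completes before time $1$, and choose the truncation so that $\eta$ keeps positive distance from $M$ wherever $\chi>0$, so that $V$ is genuinely defined along the flow. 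Finally, for the symmetric version needed in Theorem~\ref{T3.2}, one takes $B$, hence $V$, odd by Lemma~\ref{lmn4.7}$(d)$ and $\chi$ even, so that $\sigma(t,\cdot)$ commutes with $u\mapsto-u$.
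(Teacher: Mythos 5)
Your overall architecture (normalized descent flow driven by the pseudo-gradient $B$ of Lemma \ref{lmn4.7}, uniform descent rate from Lemmas \ref{lmn4.4}--\ref{lmn4.5} and \ref{lmn4.7}$(b)$--$(c)$, $(PS)$ from Lemma \ref{lmn4.8}, convexity of $P^{\pm}_\epsilon$ plus Lemma \ref{lmn4.7}$(a)$ for $(d)$) is the same as the paper's. But Step 4 contains a genuine gap: you only establish the lower bound $\|\Phi'(u)\|_{{\mathbb{X}_0^{s,p}(\Omega)}^*}\geq\delta$, hence the descent rate $\kappa$, on $\Phi^{-1}[c-\epsilon_0,c+\epsilon_0]\setminus W$. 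Once a trajectory enters $W$ (or stalls on $\partial P^{\pm}_\epsilon$, which is not even in $W$), you have no further control, and your dichotomy only yields $\sigma(1,\Phi^{c+\epsilon}\setminus W)\subset\Phi^{c-\epsilon}\cup \bar{P}^+_\epsilon\cup\bar{P}^-_\epsilon$. That is strictly weaker than the stated conclusion $(c)$, and it is the strong form $\sigma(\Phi^{c+\epsilon}\setminus W)\subset\Phi^{c-\epsilon}$ that the paper's Definition of an admissible family of invariant sets requires when Lemma \ref{lmn4.9} is fed into Theorem \ref{T3.1}; declaring the weaker inclusion to be ``the form used in the minimax arguments'' does not prove the lemma as stated.

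The paper closes exactly this hole by a different localization of the cut-off: since $M_c\setminus W=\emptyset$ and $M_c$ is compact (by $(PS)$), one fixes $\delta>0$ with $N_\delta=\{u:\dist(u,M_c)<\delta\}\subset W$ and kills the vector field only on $N_{\delta/4}$, not on all of $W$. The gradient bound \eqref{eq4.52} and hence $\|u-A(u)\|_{\mathbb{X}_0^{s,p}(\Omega)}\geq b_1$ are then proved on the whole strip $\Phi^{-1}[c-\epsilon_0,c+\epsilon_0]\setminus N_{\delta/2}$, which includes points inside $W$ away from $M_c$. Because the field has norm at most one and the deformation time is $2\epsilon/a_0<\delta/2$ (this is where $\epsilon_0<a_0\delta/4$ enters), a trajectory starting in $\Phi^{c+\epsilon}\setminus W$, hence at distance at least $\delta$ from $M_c$, can never reach $N_{\delta/2}$; so it keeps descending at rate at least $a_0$ for the full time even if it wanders into $W$, and lands in $\Phi^{c-\epsilon}$, giving the strong $(c)$. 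To repair your argument you would need this displacement-versus-time bookkeeping and a cut-off whose dead zone is a small neighbourhood of $M_c$ (or of $M([c-\epsilon',c+\epsilon'])$, using $(PS)$-compactness to get positive distance — the point you also glossed over when asserting $\chi\equiv 1$ on $\Phi^{-1}[c-\epsilon,c+\epsilon]\setminus W$), rather than a cut-off tied to $W$ itself.
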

\begin{proof}
    Let $\delta>0$ be sufficiently small. Therefore, there exists 
    $$N_\delta=\{u\in{\mathbb{X}_0^{s,p}(\Omega)}:\dist(u,M_c)<\delta\}$$ such that $N_\delta\subset W$. From Lemma \ref{lmn4.8}, there exists $\epsilon_0,\rho_1>0$ such that
    \begin{align}\label{eq4.52}
        \|\Phi'(u)\|_{{\mathbb{X}_0^{s,p}(\Omega)}}\geq \rho_1,~\text{for}~u\in \Phi^{-1}([c-\epsilon_0,c+\epsilon_0])\setminus N_{\frac{\delta}{2}}.
    \end{align}
Again, from Lemma \ref{lmn4.5} and Lemma \ref{lmn4.7}-(b), there exists $b_1$ such that 
\begin{align}\label{eq4.53}
    \|u-B(u)\|_{\mathbb{X}_0^{s,p}(\Omega)}\geq\frac{1}{2}\|u-A(u)\|_{\mathbb{X}_0^{s,p}(\Omega)}\geq b_1,~\forall\,u\in \Phi^{-1}([c-\epsilon_0,c+\epsilon_0])\setminus N_{\frac{\delta}{2}}.
\end{align}
Without loss of generality, we choose, $\epsilon_0<\frac{a_0\delta}{4}$, where $a_0<\min\{T_1,T_2\}$ with 
\begin{equation*}
    T_1=\dfrac{a_12^pb_1^{p-1}}{8}~\text{and}~T_2=\dfrac{a_2b_1}{2a_5^{2-p}(1+2^{2-p}b_1^{2-p})}.
\end{equation*}
We now define locally Lipschitz continuous functions $k_1$ and $k_2$ with $0\leq k_1(u), k_2(u) \leq 1$ such that
\begin{align}\label{eq4.54}
   k_1(u)&:= \begin{cases}
        0,&~\text{if}~u\in N_\frac{\delta}{4},\\
        1,&~\text{if}~u\in  \mathbb{X}_0^{s,p}(\Omega)\setminus N_\frac{\delta}{2},
   \end{cases}\\
   k_2(u)&:= \begin{cases}\label{eq4.55}
        0,&~\text{if}~u\in {\mathbb{X}_0^{s,p}(\Omega)}\setminus \Phi^{-1}([c-\epsilon',c+\epsilon']),\\
        1,&~\text{if}~u\in \Phi^{-1}([c-\epsilon,c+\epsilon]),
   \end{cases}
\end{align}
where $0<\epsilon<\epsilon'<\epsilon_0$ and we define $S$ as
\begin{align}\label{eq4.56}
    S(u):=-\frac{u-B(u)}{\|u-B(u)\|_{\mathbb{X}_0^{s,p}(\Omega)}},~\forall\, u\in E.
\end{align}
For $u\in E$, consider the initial value problem;
\begin{align}\label{eq4.57}
    \begin{cases}
       & \frac{d\phi}{dt}=k_1(\phi)k_2(\phi)S(\phi),\\
       & \phi(0)=u.
    \end{cases}
\end{align}
The classical theory of ordinary differential equations, the problem \eqref{eq4.57} has a unique solution. We denote $\phi(t,u)$ as the solution with a maximal interval of existence of $[0,\infty)$. Define,
$$\sigma(t,u):=\phi(\frac{2\epsilon}{a_0}t,u).$$
Clearly, $\sigma(t,0)=\phi(0,u)=u$, proving $(a)$. Moreover, for $t\in[0,1]$ and $u\notin \Phi^{-1}[c-\epsilon',c+\epsilon']$, we have $k_2(u)=0$. Therefore, we get $\sigma(t,u)=u,~\forall\,t\in[0,1],~ u\notin \Phi^{-1}[c-\epsilon',c+\epsilon']$. This proves $(b)$. We prove $(c)$ by a method of contradiction. Observe that if $u\in \Phi^{c+\epsilon}\setminus W$, then $u\notin N_\delta$. Suppose, that 
\begin{equation}\label{eqcont}
    \Phi(\phi(t,u))\geq c-\epsilon,~\forall\, t\in \bigg[0,\frac{2\epsilon}{a_0}\bigg].
\end{equation}
Clearly, $k_2(\phi(t,u))=1$ for all $t\in \bigg[0,\frac{2\epsilon}{a_0}\bigg]$. Moreover, for $t\in \bigg[0,\frac{2\epsilon}{a_0}\bigg]$, we have
\begin{align*}
    \|\phi(t,u)-u\|_{\mathbb{X}_0^{s,p}(\Omega)}&=\left|\left|\int_0^t\frac{d\phi}{ds}ds\right|\right|_{\mathbb{X}_0^{s,p}(\Omega)}\leq \int_0^t\left|\left|\frac{d\phi}{ds}\right|\right|_{\mathbb{X}_0^{s,p}(\Omega)}ds\leq t\leq \frac{2\epsilon}{a_0}< \frac{2\epsilon_0}{a_0}<\frac{\delta}{2}.
\end{align*}
Also, for $v\in M_c$, we get
\begin{align*}
\|\phi(t,u)-v\|_{\mathbb{X}_0^{s,p}(\Omega)}&=\|\phi(t,u)-u+u-v\|_{\mathbb{X}_0^{s,p}(\Omega)}\\
&\geq\|u-v\|_{\mathbb{X}_0^{s,p}(\Omega)}-\|\phi(t,u)-u\|_{\mathbb{X}_0^{s,p}(\Omega)}\\
& >\delta-\frac{\delta}{2}=\frac{\delta}{2}.
\end{align*}
Thus $\phi(t,u)\in \mathbb{X}_0^{s,p}(\Omega)\setminus N_\frac{\delta}{2}$, for all $t\in [0,\frac{2\epsilon}{a_0}]$ and hence we have $k_1(\phi(t,u))=1,~\forall \,t\in [0,\frac{2\epsilon}{a_0}]$. On applying Lemma \ref{lmn4.7}-$(b)$, $(c)$, and \eqref{eq4.53} for $p>2$, we get
\begin{align}\label{eq4.58}
   \Phi\bigg(\phi\bigg(\frac{2\epsilon}{a_0},u\bigg)\bigg)& = \Phi(u)+\int_0^{\frac{2\epsilon}{a_0}}\langle \Phi'(\phi(s,u)),S(\phi(s,u)) \rangle \d s \nonumber\\
& = \Phi(u)-\int_0^{\frac{2\epsilon}{a_0}}\bigg \langle \Phi'(\phi(s,u)), \frac{\phi(s,u)-B(\phi(s,u))}{\|\phi(s,u)-B(\phi(s,u))\|_{\mathbb{X}_0^{s,p}(\Omega)}}\bigg \rangle\d s \nonumber\\
& \leq \Phi(u)-\int_0^{\frac{2\epsilon}{a_0}}\frac{\langle \Phi'(\phi(s,u)), {\phi(s,u)-B(\phi(s,u))}\rangle}{2\|\phi(s,u)-A(\phi(s,u))\|_{\mathbb{X}_0^{s,p}(\Omega)}} \d s\nonumber\\
& \leq \Phi(u)-\frac{a_1}{4}  \int_0^{\frac{2\epsilon}{a_0}} \|\phi(s,u)-A(\phi(s,u))\|_{\mathbb{X}_0^{s,p}(\Omega)}^{p-1} \d s \nonumber \\
& \leq \Phi(u)-\frac{a_1 2^{p-1}b_1^{p-1}2\epsilon}{4a_0} \nonumber\\
&=\Phi(u)-\frac{T_12\epsilon}{a_0}\nonumber\\
&< c+\epsilon -2\epsilon=c-\epsilon.
\end{align}
This contradicts \eqref{eqcont}. To prove the result for $1<p\leq 2$, we use the following two properties: 
\begin{itemize}
    \item[$(i)$]  $(1+x)^r\leq 1+x^r$, $\forall\, x\geq 0$ and $\forall\, r\in[0,1)$ and
    \item[$(ii)$]  $\tau(x)=\frac{x}{1+x^r}$ with $r\in[0,1)$ is strictly increasing function $\forall\,x\geq0$.
\end{itemize}
Now, on using Lemma \ref{lmn4.7}-$(b)$, $(c)$, Lemma \ref{lmn4.4} and \eqref{eq4.53} for the range $1<p\leq 2$, we get
\begin{align}\label{eq4.59}
     \Phi\bigg(\phi\bigg(\frac{2\epsilon}{a_0},u\bigg)\bigg)& = \Phi(u)-\int_0^{\frac{2\epsilon}{a_0}}\bigg \langle \Phi'(\phi(s,u)), \frac{\phi(s,u)-B(\phi(s,u))}{\|\phi(s,u)-B(\phi(s,u))\|_{\mathbb{X}_0^{s,p}(\Omega)}}\bigg \rangle\d s \nonumber\\
     & \leq \Phi(u)-\int_0^{\frac{2\epsilon}{a_0}}\frac{\langle \Phi'(\phi(s,u)), {\phi(s,u)-B(\phi(s,u))}\rangle}{2\|\phi(s,u)-A(\phi(s,u))\|_{\mathbb{X}_0^{s,p}(\Omega)}} \d s\nonumber\\
     & \leq \Phi(u)-\frac{a_2}{4} \int_0^{\frac{2\epsilon}{a_0}} \frac{\|\phi(s,u)-A(\phi(s,u))\|_{\mathbb{X}_0^{s,p}(\Omega)}}{(\|\phi(s,u)\|_{{\mathbb{X}_0^{s,p}(\Omega)}}+\|A(\phi(s,u))\|_{\mathbb{X}_0^{s,p}(\Omega)})^{2-p}} \d s \nonumber \\
     & \leq \Phi(u)-\frac{a_2}{4a_5^{2-p}} \int_0^{\frac{2\epsilon}{a_0}} \frac{\|\phi(s,u)-A(\phi(s,u))\|_{\mathbb{X}_0^{s,p}(\Omega)}}{(1+\|\phi(s,u)-A(\phi(s,u))\|_{\mathbb{X}_0^{s,p}(\Omega)})^{2-p}} \d s \nonumber \\
     & \leq \Phi(u)-\frac{a_2}{4a_5^{2-p}} \int_0^{\frac{2\epsilon}{a_0}} \frac{\|\phi(s,u)-A(\phi(s,u))\|_{\mathbb{X}_0^{s,p}(\Omega)}}{1+\|\phi(s,u)-A(\phi(s,u))\|_{\mathbb{X}_0^{s,p}(\Omega)}^{2-p}} \d s \nonumber \\
     & \leq \Phi(u)-\frac{a_22b_1}{4a_5^{2-p}(1+2^{2-p}b_1^{2-p})} \int_0^{\frac{2\epsilon}{a_0}} \d s\nonumber\\
     & \leq c+\epsilon -T_2\frac{2\epsilon}{a_0}\nonumber\\
     &<c+\epsilon-2\epsilon=c-\epsilon,
\end{align}
which gives a contradiction to \eqref{eqcont}. This proves $(c)$. Since, $\phi(t,u)=u+t\frac{d}{dt}\phi(0,u)+o(t)$ as $t\rightarrow 0$ and $P^+_\epsilon,P^-_\epsilon$ are convex sets. Therefore, from Lemma \ref{lmn4.7}-$(a)$ and the convexity of $P^\pm_\epsilon$ conclude $(d)$, that is,  $\sigma(t,\bar{P}^-_\epsilon)\subset \bar{P}^-_\epsilon$ and $\sigma(t,\bar{P}^+_\epsilon)\subset \bar{P}^+_\epsilon$, $\forall\,t\in[0,1].$ This completes the proof.
\end{proof}
\noindent We now present a direct conclusion combining Lemma \ref{lmn4.7}-$(d)$ and Lemma \ref{lmn4.9}.
\begin{lemma}\label{lmn4.10}
    Let $N_1\subset {\mathbb{X}_0^{s,p}(\Omega)}$ be a symmetric neighborhood of $M_c^*=\{u\in M_c:u\neq0\}$. Then there exists $\epsilon_0>0$ with $0<\epsilon<\epsilon'<\epsilon_0$ such that there exists an odd and continuous map $\sigma:\mathbb{X}_0^{s,p}(\Omega)\rightarrow \mathbb{X}_0^{s,p}(\Omega)$ such that
    \begin{itemize}
        \item[$(a)$] $\sigma(0,u)=u,~\forall\,u\in \mathbb{X}_0^{s,p}(\Omega),$
        \item[$(b)$] $\sigma(t,u)=u,~\forall\,t\in[0,1],~ u\notin \Phi^{-1}[c-\epsilon',c+\epsilon'],$
        \item[$(c)$] $\sigma(1,\Phi^{c+\epsilon}\setminus N)\subset \Phi^{c-\epsilon}$,
        \item[$(d)$] $\sigma(t,\bar{P}^-_\epsilon)\subset \bar{P}^-_\epsilon$ and $\sigma(t,\bar{P}^+_\epsilon)\subset \bar{P}^+_\epsilon$, $\forall\,t\in[0,1].$ 
    \end{itemize}
\end{lemma}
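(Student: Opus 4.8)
The plan is to run the construction of the deformation in Lemma \ref{lmn4.9} once more, but now using the \emph{odd} pseudo-gradient field supplied by Lemma \ref{lmn4.7}$(d)$ and choosing all auxiliary cut-offs to be even; the negative-gradient flow of such a field is equivariant under $u\mapsto-u$, which is precisely the extra oddness of $\sigma$ demanded here. To set up, note that since $f$ is odd, $F(x,\cdot)$ is even, hence $\Phi$ is an even $C^1$ functional, the solution operator $A$ of \eqref{NP} is odd, and by Lemma \ref{lmn4.7}$(d)$ the pseudo-gradient $B$ may be taken locally Lipschitz and odd while still obeying Lemma \ref{lmn4.7}$(a)$--$(c)$. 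In particular $M$, $M_c$ and $M_c^{*}$ are symmetric sets and, by Lemma \ref{lmn4.8}, $M_c$ is compact. Given the symmetric neighborhood $N_1$ of $M_c^{*}$, I would fix $\delta>0$ so small that the (automatically symmetric) set $N_\delta:=\{u\in\mathbb{X}_0^{s,p}(\Omega):\dist(u,M_c^{*})<\delta\}$ satisfies $\overline{N_\delta}\subset N_1$, and then, exactly as in Lemma \ref{lmn4.9}, use the $(PS)_c$-condition together with Lemma \ref{lmn4.5} and Lemma \ref{lmn4.7}$(b)$ to obtain $\epsilon_0,\rho_1,b_1>0$ such that $\|u-B(u)\|_{\mathbb{X}_0^{s,p}(\Omega)}\geq\tfrac12\|u-A(u)\|_{\mathbb{X}_0^{s,p}(\Omega)}\geq b_1$ on $\Phi^{-1}([c-\epsilon_0,c+\epsilon_0])\setminus N_{\delta/2}$, finally shrinking $\epsilon_0$ below $a_0\delta/4$ with $a_0<\min\{T_1,T_2\}$ as there.

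Next I would choose the cut-offs symmetrically. Because $N_\delta$ is symmetric, the function
\[
k_1(u):=\frac{\dist(u,N_{\delta/4})}{\dist(u,N_{\delta/4})+\dist(u,\mathbb{X}_0^{s,p}(\Omega)\setminus N_{\delta/2})}
\]
is even and locally Lipschitz, vanishes on $N_{\delta/4}$ and equals $1$ off $N_{\delta/2}$; and because $\Phi$ is even, the sets $\Phi^{-1}([c-\epsilon',c+\epsilon'])$ and $\Phi^{-1}([c-\epsilon,c+\epsilon])$ are symmetric, so an even, locally Lipschitz cut-off $k_2$ of the type used in Lemma \ref{lmn4.9} is available. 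Put $S(u):=-(u-B(u))/\|u-B(u)\|_{\mathbb{X}_0^{s,p}(\Omega)}$ on $E$; oddness of $B$ makes $S$ odd, so $V:=k_1k_2S$ is odd. The Cauchy problem $\phi'(t)=V(\phi(t))$, $\phi(0,u)=u$, has a unique global solution in $\mathbb{X}_0^{s,p}(\Omega)$, and by uniqueness $t\mapsto-\phi(t,u)$ solves the same problem with datum $-u$, so $\phi(t,-u)=-\phi(t,u)$. Hence $\sigma(t,u):=\phi(\tfrac{2\epsilon}{a_0}t,u)$ is odd and continuous; $(a)$ is immediate; $(b)$ follows because $k_2(u)=0$ forces $\phi(\cdot,u)\equiv u$ by uniqueness; $(c)$ is the energy-descent contradiction of \eqref{eq4.58}--\eqref{eq4.59}, now using that $u\in\Phi^{c+\epsilon}\setminus N_1$ gives $\dist(u,M_c^{*})\geq\delta$, so along $[0,\tfrac{2\epsilon}{a_0}]$ the trajectory stays in $\mathbb{X}_0^{s,p}(\Omega)\setminus N_{\delta/2}$ and $k_1\equiv1$ on it; and $(d)$ follows from the convexity of $P^{\pm}_\epsilon$ and Lemma \ref{lmn4.7}$(a)$ exactly as before, which is consistent with oddness since $-\bar{P}^{+}_\epsilon=\bar{P}^{-}_\epsilon$.

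The only point that genuinely exceeds Lemma \ref{lmn4.9}, and the one I expect to require the most care, is guaranteeing that the field is odd: this rests on Lemma \ref{lmn4.7}$(d)$, where the symmetrization $\widetilde{B}(u)=\tfrac12(B(u)-B(-u))$ leaves the pseudo-gradient estimates of Lemma \ref{lmn4.7}$(a)$--$(c)$ intact, and on the observation that $N_\delta$ and the sub-/super-level sets of the even functional $\Phi$ are symmetric, so the cut-offs break no symmetry; one must simply check that replacing the neighborhoods of $M_c\setminus W$ used in Lemma \ref{lmn4.9} by a symmetric neighborhood of $M_c^{*}$ does not affect the descent estimate, which it does not since $M_c^{*}\subset M_c$ and $N_\delta$ is symmetric. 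For completeness I would remark that $0\in M_c$ cannot occur at the levels relevant to the applications, because $(f_1)$ makes $0$ a strict local minimum of $\Phi$ and the critical values in question are positive, so there $M_c^{*}=M_c$ and the construction above applies verbatim.
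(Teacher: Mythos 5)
Your proposal is correct and follows exactly the route the paper intends: the paper offers no written proof, merely asserting the lemma as a "direct conclusion combining Lemma \ref{lmn4.7}-$(d)$ and Lemma \ref{lmn4.9}", and your argument is precisely that combination (odd pseudo-gradient field from Lemma \ref{lmn4.7}$(d)$, even cut-offs, oddness of the flow by uniqueness, then the descent and invariance estimates of Lemma \ref{lmn4.9}). Your closing remark on the level $c=0$ versus $M_c^*=M_c$ is a sensible extra precision that the paper leaves implicit.
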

\noindent We have now established the necessary results to prove our first main result.
   \section*{{\bf\emph{Proof of the Theorem \ref{T2.3}}}}
\noindent The proof is divided into two parts: the first part demonstrates the existence, while the second part proves the existence of infinitely many solutions.
\subsection*{Part I: Existence of solution:}
 In this part, we obtain the assumptions as in Theorem \ref{T3.1}. On using $(f_1), (f_2), (f_3)$ and the Sobolev inequality, we get for any $\epsilon>0$, there exists $C_\epsilon>0$ such that
\begin{align}
    \Phi(u)&=\frac{1}{p} \int_\Omega|\nabla u|^p \d x+\frac{1}{p} {\int_{\mathbb{R}^N} \int_{\mathbb{R}^N}} \frac{|u(x)-u(y)|^p}{|x-y|^{N+sp}}\d x\d y -\int_\Omega F(x,u) \d x \nonumber\\
    &\geq \frac{1}{p} \bigg[ \int_\Omega|\nabla u|^p \d x+ {\int_{\mathbb{R}^N} \int_{\mathbb{R}^N}} \frac{|u(x)-u(y)|^p}{|x-y|^{N+sp}}\d x\d y\bigg] -\epsilon \int_\Omega |u|^p \d x - C_\epsilon \int_\Omega |u|^q \d x\nonumber \\
    &\geq \bigg(\frac{1}{p}-\epsilon C_1-C_\epsilon C_2 \bigg)\bigg[ \int_\Omega|\nabla u|^p \d x+ {\int_{\mathbb{R}^N} \int_{\mathbb{R}^N}} \frac{|u(x)-u(y)|^p}{|x-y|^{N+sp}}\d x\d y\bigg] \nonumber
\end{align}
for some $C_1,C_2>0$ and $q\in(p,p^*)$. Therefore, there exists $\epsilon_0>0$ (sufficiently small), so that for all $\epsilon\in(0,\epsilon_0]$, there exists $\delta=\delta(\epsilon)>0$, such that
\begin{align}
    \Phi(u)&\geq \delta,~\forall\, u\in \mathbb{X}_0^{s,p}(\Omega)~\text{with}~ \|u\|_{\mathbb{X}_0^{s,p}(\Omega)}=\epsilon~\text{and}~\label{eq4.60}\\
    \Phi(u)&\geq 0,\forall u\in \mathbb{X}_0^{s,p}(\Omega)~\text{with}~ \|u\|_{\mathbb{X}_0^{s,p}(\Omega)}\leq\epsilon_0. \label{eq4.61}
\end{align}
Thus, we get 
\begin{align}\label{eq4.62}
    \inf_{u\in\overline{P^+_\epsilon}\cap \overline{P^-_\epsilon}} \Phi(u)=0,~\forall\, \epsilon\in(0,\epsilon_0],
\end{align}
From \eqref{eq4.62}, we get $0\in (\overline{P^+_\epsilon}\cap \overline{P^-_\epsilon} )$ is the unique critical point of $\Phi$. Consider two functions $v_1$, $v_2$ $\in C_c^\infty(\mathbb{R})\setminus\{0\}$ $v_1\leq0$, $v_2\geq0$ such that 
$$ supp(v_1)\cap supp(v_2)=\emptyset.$$
We define, 
\begin{equation}\label{psi def}
    \psi(k,l):=R(kv_1+lv_2),~\forall\,(k,l)\in \Delta~\text{and}~R\in\mathbb{R^+}.
\end{equation}
Thus for any $(k,l)\in \Delta$, we have
$$\psi(0,l)=Rlv_2\in P^+_\epsilon~\text{and}~\psi(k,o)=Rkv_1\in P^-_\epsilon.$$
From the assumptions $(f_1), (f_2), (f_3)$, there exist $C_3,C_4>0$ such that 
$$F(x,t)\geq C_3|t|^\mu-C_4,~\text{uniformly for}~x\in \bar{\Omega}.$$
Therefore, for any $u=R(tv_1+(1-t)v_2)\in \psi(\partial_0 \Delta)$,  $0\leq t\leq 1$, we get
\begin{align*}
    &\Phi(u) =\frac{1}{p} \int_\Omega|\nabla u|^p \d x+\frac{1}{p} {\int_{\mathbb{R}^N} \int_{\mathbb{R}^N}} \frac{|u(x)-u(y)|^p}{|x-y|^{N+sp}}\d x\d y -\int_\Omega F(x,u) \d x \nonumber\\
   \leq &  \frac{1}{p} \bigg[ \int_\Omega|\nabla u|^p \d x+ {\int_{\mathbb{R}^N} \int_{\mathbb{R}^N}}\frac{|u(x)-u(y)|^p}{|x-y|^{N+sp}}\d x\d y\bigg] -C_3 \int_\Omega |u|^\mu \d x\nonumber+C_4|\Omega| \nonumber \\
   = & \bigg[R^p \frac{1}{p} \|tv_1+(1-t)v_2\|_{\mathbb{X}_0^{s,p}(\Omega)} -C_3 R^{\mu}\int_{{supp(v_1)\cap supp(v_2)}} |tv_1(x)+(1-t)v_2(x)|^\mu \d x\bigg]\nonumber \\
   &\hspace{12cm}+C_4|\Omega| \nonumber \\
     =&\bigg[  \frac{R^p}{p} \|tv_1+(1-t)v_2\|_{\mathbb{X}_0^{s,p}(\Omega)} -C_3 R^{\mu}\bigg(t^\mu\int_{supp(v_1)} |v_1|^\mu+(1- t)^\mu\int_{supp(v_2)} |v_2|^\mu\bigg)\bigg] \nonumber\\
     &\hspace{12cm}+C_4|\Omega|.
\end{align*}

\noindent Thus, we have $\Phi(R(tv_1+(1-t)v_2))\longrightarrow -\infty$ as $R\rightarrow \infty$. On choosing sufficiently large $R>0$ in \eqref{psi def} and setting $Z=P^+_\epsilon \cap P^-_\epsilon$, we obtain \begin{equation*}
    \sup_{u\in \psi(\partial_0\Delta)}\Phi(u)<0<c_*~\text{ and } \psi(\partial_0\Delta)\cap Z=\emptyset.
\end{equation*} 
Consequently, from \eqref{eq4.60},\eqref{eq4.62} and  Theorem \ref{T3.1}, we conclude that $\Phi$ possesses at least one critical point $u^* \in M_{c_0} \setminus W$, implying that $u^*$ is a sign-changing solution of \eqref{MP}.
\subsection*{Part II: Infinitely many solutions:}
 In this case, we first construct a sequence of functions $\{\psi_n\}$ similar to \eqref{psi def}. Let $\{v_{ij}\}\subset C_c^\infty(\mathbb{R}^N)\setminus{\{0\}}$ be  a collection of mutually disjoint functions for $1\leq i\leq n,~1\leq j\leq 2$, and let $v_j=(v_{1j},v_{2j},...,v_{nj})$, $j=1,2$. For any $t=(t_1,t_2)\in B_{2n}$, we define
$$\psi_n(t)=\psi_n(t_1,t_2)=R_n(t_1v_1+t_2v_2) \text{ for } t_1,t_2\in B_n, t_j=(t_{1,j},t_{2,j},...t_{n,j}), j=1,2,$$ 
where $R_n>0$ and $t_jv_j=t_{1j}v_{1j}+t_{2j}v_{2j}+....+t_{nj}v_{nj},~j=1,2$. By definition, we have $\psi_n\in C(B_{2n},{\mathbb{X}_0^{s,p}(\Omega)})$. Moreover, $\psi_n(0)=0\in P_\epsilon^+\cap P_\epsilon^-$ and $\psi_n(-t)=-\psi_n(t)$, $\forall\, t\in B_{2n}$. Therefore, proceeding with similar arguments as in \textit{Part I}, there exists $R_n>0$, large enough, such that 
$$\sup_{u\in \psi_n(\partial B_{2n})}\Phi(u)<0<\inf_{u\in \partial P^+_\epsilon \cap \partial P^-_\epsilon}\Phi(u).$$
From Theorem \ref{T3.2}, we get
\begin{align}\label{eq4.64}
    c_j=\inf_{B\in \Gamma_j}\sup_{u\in {B\setminus W}}\Phi(u),
\end{align}
where $\Gamma_j$ refers to the definition as in Theorem \ref{T3.2}. 
Therefore, from \eqref{eq4.64}, we deduce that $c_j$ is a critical value of $\Phi$, for all $j\geq 3$, and there exists a sequence $\{u_j\}_{j\geq3}\subset {\mathbb{X}_0^{s,p}(\Omega)} \setminus W$ such that $u_j\in M_{c_j}\setminus W$ and $\Phi(u_j)=c_j\longrightarrow +\infty$ as $j\rightarrow +\infty$. This completes the proof.

\section{Least Energy Sign-Changing Solution For The Mixed Operator}\label{sec5}

\begin{lemma}\label{lmn5.1}
    According to the assumptions of Theorem \ref{T2.4} and $q\in(p,p^*)$, there exist $\mu_1, \mu_2>0$ such that\\
    $(a)$ $\|u^{\pm}\|_{{\mathbb{X}_0^{s,p}(\Omega)}}\geq \mu_1$, $\forall\,u\in \mathcal{M}$,\\
    $(b)$ $\int_\Omega |u^{\pm}|^q \d x\geq \mu_2$, $\forall\,u\in \mathcal{M}$.
\end{lemma}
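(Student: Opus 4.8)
The strategy is to read off both bounds directly from the two constraints $\langle\Phi'(u),u^{+}\rangle=\langle\Phi'(u),u^{-}\rangle=0$ defining membership in $\mathcal M$, the only genuine subtlety being the nonlocal term, for which the naive splitting fails. Fix $u\in\mathcal M$; since truncation is bounded on $W_0^{1,p}(\Omega)$ and on $W_0^{s,p}(\Omega)$, Remark \ref{rm 2.1} gives $u^{\pm}\in\mathbb{X}_0^{s,p}(\Omega)$, and $u^{\pm}\neq 0$ by definition of $\mathcal M$. I would first record two pointwise facts for $u^{+}$ (the case of $u^{-}$ being identical). Because $\nabla u^{+}=\nabla u\,\mathbbm{1}_{\{u>0\}}$ a.e., we have $|\nabla u|^{p-2}\nabla u\cdot\nabla u^{+}=|\nabla u^{+}|^{p}$; and since $u^{+}\equiv 0$ on $\{u\le 0\}$ and $f(x,0)=0$ (a consequence of $(f_1)$), we have $\int_\Omega f(x,u)u^{+}\,\d x=\int_\Omega f(x,u^{+})u^{+}\,\d x$. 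For the nonlocal term I would invoke the elementary inequality
\[
|a-b|^{p-2}(a-b)(a^{+}-b^{+})\ \ge\ |a^{+}-b^{+}|^{p},\qquad a,b\in\mathbb{R},
\]
valid for $p>1$ because, assuming $a\ge b$, both $a-b$ and $a^{+}-b^{+}$ are nonnegative and $a-b\ge a^{+}-b^{+}$. Integrating this against the positive kernel $|x-y|^{-(N+sp)}$ over $\Omega\times\Omega$ and combining with the two facts above yields
\[
0=\langle\Phi'(u),u^{+}\rangle\ \ge\ \|u^{+}\|_{\mathbb{X}_0^{s,p}(\Omega)}^{p}-\int_\Omega f(x,u^{+})u^{+}\,\d x,
\]
that is, $\|u^{+}\|_{\mathbb{X}_0^{s,p}(\Omega)}^{p}\le\int_\Omega f(x,u^{+})u^{+}\,\d x$.

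Next I would feed in the subcritical growth of $f$: by $(f_1)$ and $(f_2)$, for every $\delta>0$ there is $C_\delta>0$ with $|f(x,t)|\le\delta|t|^{p-1}+C_\delta|t|^{q-1}$, hence $\int_\Omega f(x,u^{+})u^{+}\,\d x\le\delta\|u^{+}\|_{L^p(\Omega)}^{p}+C_\delta\|u^{+}\|_{L^q(\Omega)}^{q}$. Using the continuous embeddings $\mathbb{X}_0^{s,p}(\Omega)\hookrightarrow L^p(\Omega)$ and $\mathbb{X}_0^{s,p}(\Omega)\hookrightarrow L^q(\Omega)$ from Theorem \ref{thm cpt}, there is $C>0$ (independent of $u$) with
\[
\|u^{+}\|_{\mathbb{X}_0^{s,p}(\Omega)}^{p}\ \le\ \delta C\,\|u^{+}\|_{\mathbb{X}_0^{s,p}(\Omega)}^{p}+C_\delta C\,\|u^{+}\|_{\mathbb{X}_0^{s,p}(\Omega)}^{q}.
\]
Fixing $\delta$ with $\delta C\le\tfrac12$ and dividing by $\|u^{+}\|_{\mathbb{X}_0^{s,p}(\Omega)}^{p}>0$ (here $u^{+}\neq 0$ is used) gives $\|u^{+}\|_{\mathbb{X}_0^{s,p}(\Omega)}^{q-p}\ge(2C_\delta C)^{-1}$, which is $(a)$ with $\mu_1:=(2C_\delta C)^{-1/(q-p)}$. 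For $(b)$ I instead keep $\|u^{+}\|_{L^q(\Omega)}^{q}$ on the right of the last inequality before invoking the $q$-embedding: from $\|u^{+}\|_{\mathbb{X}_0^{s,p}(\Omega)}^{p}\le\delta C\|u^{+}\|_{\mathbb{X}_0^{s,p}(\Omega)}^{p}+C_\delta\|u^{+}\|_{L^q(\Omega)}^{q}$ one gets $(1-\delta C)\|u^{+}\|_{\mathbb{X}_0^{s,p}(\Omega)}^{p}\le C_\delta\|u^{+}\|_{L^q(\Omega)}^{q}$, and combining with $(a)$,
\[
\int_\Omega|u^{+}|^{q}\,\d x\ \ge\ \frac{1-\delta C}{C_\delta}\,\|u^{+}\|_{\mathbb{X}_0^{s,p}(\Omega)}^{p}\ \ge\ \frac{1-\delta C}{C_\delta}\,\mu_1^{p}=:\mu_2.
\]

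The argument for $u^{-}$ is word for word the same: one uses $\nabla u^{-}=\nabla u\,\mathbbm{1}_{\{u<0\}}$, the inequality $|a-b|^{p-2}(a-b)(a^{-}-b^{-})\ge|a^{-}-b^{-}|^{p}$ (true for the same monotonicity reason, $\min(\cdot,0)$ being non-decreasing), $\int_\Omega f(x,u)u^{-}\,\d x=\int_\Omega f(x,u^{-})u^{-}\,\d x$, and $|f(x,u^{-})u^{-}|\le\delta|u^{-}|^{p}+C_\delta|u^{-}|^{q}$, to conclude $\|u^{-}\|_{\mathbb{X}_0^{s,p}(\Omega)}\ge\mu_1$ and $\int_\Omega|u^{-}|^{q}\,\d x\ge\mu_2$ (after, if one wishes, replacing $\mu_1,\mu_2$ by smaller common constants). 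Since all constants depend only on $\Omega,N,s,p,q$ and the bounds in $(f_1)$--$(f_2)$, they are uniform in $u\in\mathcal M$. The one delicate point is precisely the nonlocal bilinear estimate $\langle(-\Delta)^s_p u,\,u^{\pm}\rangle\ge[u^{\pm}]_{W^{s,p}(\Omega)}^{p}$; once this one-sided substitute for the missing identity $\Phi'(u)=\Phi'(u^{+})+\Phi'(u^{-})$ is secured, the remainder is a routine Nehari-type bootstrap on the constraint $\langle\Phi'(u),u^{\pm}\rangle=0$.
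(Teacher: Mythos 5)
Your proof is correct, and its overall skeleton is the same as the paper's: use the constraint $\langle\Phi'(u),u^{\pm}\rangle=0$ to bound $\|u^{\pm}\|_{\mathbb{X}_0^{s,p}(\Omega)}^{p}$ by $\int_\Omega f(x,u^{\pm})u^{\pm}\d x$, then run the standard Nehari bootstrap with the growth estimate $f(x,t)t\leq \epsilon|t|^{p}+C_\epsilon|t|^{q}$ from $(f_1)$--$(f_2)$ and the embeddings of Theorem \ref{thm cpt}; your derivations of $\mu_1$ and $\mu_2$ coincide with the paper's \eqref{eq5.3}--\eqref{eq5.7}. The only genuine difference is how the nonlocal term is handled. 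The paper writes the exact decomposition $\langle\Phi'(u),u^{+}\rangle=\langle\Phi'(u^{+}),u^{+}\rangle+2E_1^{+}(u)$ by splitting the double integral over $\Omega^{+}\times\Omega^{-}$ and shows the cross term $E_1^{+}(u)$ is strictly positive, which yields the strict inequality $\|u^{+}\|_{\mathbb{X}_0^{s,p}(\Omega)}^{p}<\int_\Omega f(x,u^{+})u^{+}\d x$ (and these explicit cross terms are reused later, e.g.\ in Lemmas \ref{lmn5.4}--\ref{lmn5.6}). You instead integrate the pointwise monotonicity inequality $|a-b|^{p-2}(a-b)(a^{+}-b^{+})\geq|a^{+}-b^{+}|^{p}$ against the kernel, which gives only the non-strict inequality but is all the bootstrap needs; it is a cleaner one-line substitute for the domain splitting and transfers verbatim to general symmetric kernels, at the cost of not exhibiting the positive interaction term that the paper exploits again in the uniqueness and energy-comparison arguments. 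Both routes are sound.
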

\begin{proof}
    Let $u\in \mathcal{M}$. Then we get 
    $$\langle \Phi'(u),u^{\pm} \rangle=0 \text{ and } \int_\Omega f(x,u)u^+\d x=\int_\Omega f(x,u^+)u^+\d x.$$
    Now, observe that
    \begin{align}\label{eq5.1}
        0=& \langle \Phi'(u),u^{+} \rangle=\int_\Omega |\nabla u|^{p-2} \nabla u \cdot\nabla u^+ \d x\nonumber\\
        & +{\int_{\mathbb{R}^N} \int_{\mathbb{R}^N}}\frac{|u(x)-u(y)|^{p-2}(u(x)-u(y))(u^+(x)-u^+(y))}{|x-y|^{N+sp}} \d x \d y - \int_\Omega f(x,u)u^+ \d x \nonumber\\
        =& \int_\Omega |\nabla u^+|^{p-2} \nabla u^+\cdot\nabla u^+ \d x +{\int_{\mathbb{R}^N} \int_{\mathbb{R}^N}}\frac{\splitfrac{|u^+(x)-u^+(y)|^{p-2}(u^+(x)-u^+(y))}{\times(u^+(x)-u^+(y))}}{|x-y|^{N+sp}} \d x \d y \nonumber\\
        & +2\bigg[\int_{\Omega^+}\int_{\Omega^-}\frac{|u^+(x)-u^-(y)|^{p-2}(u^+(x)-u^-(y))u^+(x)}{|x-y|^{N+sp}}\d x\d y \nonumber \\
        &-\int_{\Omega^+}\int_{\Omega^-}\frac{|u^+(x)|^{p}}{|x-y|^{N+sp}}\d x\bigg] -\int_\Omega f(x,u^+)u^+ \d x\nonumber\\
        =&  \langle \Phi'(u^+),u^{+} \rangle + 2E_1^+(u),
    \end{align}
    where $$E_1^+(u)=\int_{\Omega^+}\int_{\Omega^-}\frac{|u^+(x)-u^-(y)|^{p-1}u^+(x)}{|x-y|^{N+sp}}\d x\d y-\int_{\Omega^+}\int_{\Omega^-}\frac{|u^+(x)|^{p}}{|x-y|^{N+sp}}\d x.$$
    Clearly, $E_1^+(u)>0$. Thus from \eqref{eq5.1}, we get $\langle \Phi'(u^+),u^{+} \rangle<0$. Therefore, we have
    $$\|u^+\|_{\mathbb{X}_0^{s,p}(\Omega)}^p<\int_\Omega f(x,u^+)u^+ \d x.$$
    Similarly, we obtain 
    $$\|u^-\|_{\mathbb{X}_0^{s,p}(\Omega)}^p<\int_\Omega f(x,u^-)u^- \d x.$$
    Thanks to conditions $(f_1)$ and $(f_2)$, for any $\epsilon>0$, there exists $C_\epsilon>0$ such that
    \begin{align}\label{5.2}
        f(x,t)t\leq \epsilon |t|^p+C_\epsilon |t|^q,~\forall\,x\in \bar{\Omega} ~\text{and}~\forall\,t\in \mathbb{R}.
        \end{align}
   On using the mixed Sobolev inequality \eqref{eq2.8S} and H\"older inequality, we get
    \begin{align}\label{eq5.2}
        \|u^\pm\|_{\mathbb{X}_0^{s,p}(\Omega)}^p&<\int_\Omega f(x,u^\pm)u^\pm \d x\leq \int_\Omega(\epsilon |u^\pm|^p+C_\epsilon |u^\pm|^q) \d x\\
        & \leq \epsilon C_1 \|u^\pm\|^p_{\mathbb{X}_0^{s,p}(\Omega)}+ C_\epsilon C_2\|u^\pm\|^q_{\mathbb{X}_0^{s,p}(\Omega)},\label{eq5.3}
    \end{align}
    for some  $C_1, C_2>0$. Since, $q\in(p,p^*)$. Choose $\epsilon=\frac{1}{2C_1}$ in \eqref{eq5.3}, we deduce
    \begin{align}\label{eq5.4}
        C_\epsilon C_2\|u^\pm\|^q_{\mathbb{X}_0^{s,p}(\Omega)} \geq \frac{1}{2} \|u^\pm\|_{\mathbb{X}_0^{s,p}(\Omega)}^p.
    \end{align}
    Consequently, the inequality \eqref{eq5.4} gives
    \begin{align}\label{eq5.5}
         \|u^{\pm}\|_{{\mathbb{X}_0^{s,p}(\Omega)}}\geq \bigg(\frac{1}{2C_\epsilon C_2}\bigg)^\frac{1}{q-p}=\mu_1.
    \end{align}
    Again, using the H\"older inequality and \eqref{eq5.2}, we get 
    \begin{align}\label{eq5.6}
        \epsilon C_1 \|u^\pm\|^p_{\mathbb{X}_0^{s,p}(\Omega)}+ C_\epsilon|u^\pm|_q^q \geq \|u^\pm\|_{\mathbb{X}_0^{s,p}(\Omega)}^p. 
    \end{align}
     Finally choosing $\epsilon=\frac{1}{2C_1}$ in \eqref{eq5.6} and from \eqref{eq5.5}, we obtain
     \begin{align}\label{eq5.7}
         |u^\pm|_q^q \geq \frac{1}{2C_\epsilon}\|u^\pm\|_{\mathbb{X}_0^{s,p}(\Omega)}^p \geq \frac{\mu^p_1}{2C_\epsilon}=\mu_2.
     \end{align}
     This completes the proof.
\end{proof}
\begin{lemma}\label{lmn5.2}
    Suppose that $(f_1)$, $(f_2)$, $(f_4)$, $(f_5)$ hold, and $f\in C^1(\bar{\Omega}\cross\mathbb{R},\mathbb{R})$. Then for any $u\in {\mathbb{X}_0^{s,p}(\Omega)}\setminus \{0\}$, there exists a unique $\tau_0\in \mathbb{R^+}$ such that $\tau_0u\in \mathcal{N}$. Furthermore, for any $u\in \mathcal{N}$, we have 
    \begin{align}\label{eq5.8}
        \Phi(u)=\max_{t\in[0,\infty)} \Phi(tu).
    \end{align}
\end{lemma}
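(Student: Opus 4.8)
The plan is to reduce everything to a one–variable analysis of the fibering map $g(t):=\Phi(tu)$ for $t\ge 0$. Writing $\Phi$ with the homogeneous norm, $g(t)=\frac{t^p}{p}\|u\|_{\mathbb{X}_0^{s,p}(\Omega)}^p-\int_\Omega F(x,tu)\,\d x$, so $g$ is $C^1$ on $(0,\infty)$ and, for $t>0$,
\[
g'(t)=t^{p-1}\|u\|_{\mathbb{X}_0^{s,p}(\Omega)}^p-\int_\Omega f(x,tu)\,u\,\d x=t^{p-1}\bigl(\|u\|_{\mathbb{X}_0^{s,p}(\Omega)}^p-\varphi(t)\bigr),
\]
where $\varphi(t):=\int_\Omega \tfrac{f(x,tu(x))}{|tu(x)|^{p-2}tu(x)}\,|u(x)|^p\,\d x$, the integrand being understood to vanish on $\{u=0\}$. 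Since $\langle\Phi'(tu),tu\rangle=t^p\bigl(\|u\|_{\mathbb{X}_0^{s,p}(\Omega)}^p-\varphi(t)\bigr)$, one has $tu\in\mathcal N$ if and only if $\varphi(t)=\|u\|_{\mathbb{X}_0^{s,p}(\Omega)}^p$.

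Next I would establish the relevant properties of $\varphi$ on $(0,\infty)$. First, $(f_1)$ together with $(f_5)$ forces $\frac{f(x,v)}{|v|^{p-2}v}>0$ for every $v\ne 0$ (the ratio is monotone on each of $(0,\infty)$ and $(-\infty,0)$ with limit $0$ at the origin), so the integrand defining $\varphi$ is nonnegative. Second, combining $(f_1)$ and $(f_2)$ yields $|f(x,v)|\le \epsilon|v|^{p-1}+C_\epsilon|v|^{q-1}$ for all $v$, hence on any compact $t$-interval the integrand is dominated by $\epsilon|u|^p+C_\epsilon C|u|^q\in L^1(\Omega)$ (using $\mathbb{X}_0^{s,p}(\Omega)\hookrightarrow L^q(\Omega)$ from Theorem~\ref{thm cpt}); by dominated convergence $\varphi$ is continuous on $(0,\infty)$, and the same domination plus the pointwise limit from $(f_1)$ gives $\varphi(t)\to 0$ as $t\to 0^+$. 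Third, by $(f_4)$ the integrand tends to $+\infty$ a.e.\ on the positive–measure set $\{u\ne 0\}$, so Fatou's lemma gives $\varphi(t)\to +\infty$ as $t\to\infty$. Finally, $\varphi$ is strictly increasing: for $x$ with $u(x)>0$ the map $t\mapsto \frac{f(x,tu(x))}{|tu(x)|^{p-2}tu(x)}$ is strictly increasing by the first half of $(f_5)$, while for $u(x)<0$, increasing $t$ makes $tu(x)$ decrease through negative values, and the second half of $(f_5)$ (strictly decreasing on $(-\infty,0)$) again makes this map strictly increasing.

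With these facts the conclusions are immediate. By continuity, the boundary limits of $\varphi$, and strict monotonicity, the equation $\varphi(t)=\|u\|_{\mathbb{X}_0^{s,p}(\Omega)}^p$ has exactly one solution $\tau_0>0$; that is, $\tau_0u$ is the unique positive multiple of $u$ lying in $\mathcal N$, proving the first assertion. For the second, if $u\in\mathcal N$ then $\langle\Phi'(u),u\rangle=0$ reads $\varphi(1)=\|u\|_{\mathbb{X}_0^{s,p}(\Omega)}^p$, so uniqueness forces $\tau_0=1$; then $g'(t)=t^{p-1}\bigl(\|u\|_{\mathbb{X}_0^{s,p}(\Omega)}^p-\varphi(t)\bigr)$ is positive on $(0,1)$ and negative on $(1,\infty)$, and since $g(0)=0$ the map $g$ attains its maximum over $[0,\infty)$ exactly at $t=1$, i.e.\ $\Phi(u)=\max_{t\in[0,\infty)}\Phi(tu)$. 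The only genuinely delicate points are the measure–theoretic limits of $\varphi$ at $0$ and $\infty$ (needing the domination and Fatou arguments) and the sign bookkeeping in $(f_5)$ when $u(x)<0$; the rest is routine.
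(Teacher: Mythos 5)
Your proof is correct and follows essentially the same route as the paper: both analyze the fibering map $t\mapsto\Phi(tu)$ and derive uniqueness from the strict monotonicity (via $(f_5)$) of $t\mapsto h'(t)/t^{p-1}=\|u\|_{\mathbb{X}_0^{s,p}(\Omega)}^p-\varphi(t)$, which is exactly your function $\varphi$. The only cosmetic difference is that the paper obtains $\tau_0$ as the interior global maximum of the fiber map (using $h>0$ near $0$ and $h<0$ at infinity), whereas you apply the intermediate value theorem directly to $\varphi$ using its limits at $0$ and $\infty$; your version actually spells out the domination, Fatou, and sign-bookkeeping details that the paper leaves implicit.
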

\begin{proof}
    For $u\in {\mathbb{X}_0^{s,p}(\Omega)}\setminus \{0\}$ and $t\geq0$, we define 
    $$h(t)=\Phi(tu)=\frac{t^p}{p}\|u\|^p_{\mathbb{X}_0^{s,p}(\Omega)}-\int_\Omega F(x,tu) \d x.$$ 
    Clearly, $h(0)=0$ and
    \begin{align}\label{eq5.9}
        h'(t)=\langle \Phi'(tu),u\rangle=t^{p-1}\|u\|_{\mathbb{X}_0^{s,p}(\Omega)}^p-\int_\Omega f(x,tu)u \d x.
    \end{align}
    Thus $h'(\tau_0)=0 $ for some $\tau_0$ if and only if $$\tau_0^p\|u\|_{\mathbb{X}_0^{s,p}(\Omega)}^p=\int_\Omega f(x,\tau_0u)\tau_0u \d x.$$
Therefore, $\tau_0u\in \mathcal{N}$. Using $(f_1)$, $(f_2)$ and $(f_4)$, there exists $\delta>0$ such that $h(t)>0$ if $t\in(0,\delta)$ and $h(t)<0$ if $t\in(\frac{1}{\delta},+\infty)$. Since $h(0)=0$, there exists $\tau_0>0$ such that $h$ has a global maximum at $\tau_0$. Hence $h'(\tau_0)=0$ and we get $\tau_0 u\in \mathcal{N}$. Moreover, using $(f_5)$, we see that $\frac{h'(t)}{t^{p-1}}$ is strictly monotone in $t\in(0,\infty)$. Thus, $\tau_0$ is unique such that $\tau_0 u\in \mathcal{N}$.
    \par For $u\in \mathcal{N}$, we have that $\tau_0=1$ and $h(t)$ increase in $(0,1)$ and decrease in $(1,\infty)$. Therefore, we obtain
    $$\Phi(u)=h(1)=\max_{t\in[0,1]}h(t)=\max_{t\in[0,\infty)}h(t).$$
     This completes the proof.
\end{proof}
\begin{lemma}\label{lmn5.3}
    There exists $u_*\in {\mathbb{X}_0^{s,p}(\Omega)}$ such that $c_s$ can be at $u_*$.
\end{lemma}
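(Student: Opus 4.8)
The plan is to realize $c_s$ as the infimum of $\Phi$ along a minimizing sequence on the Nehari manifold $\mathcal{N}$ and to extract a minimizer by weak compactness, replacing the usual Ambrosetti--Rabinowitz argument by the monotonicity condition $(f_5)$ together with the variational characterization in Lemma \ref{lmn5.2}. Throughout, the growth bounds coming from $(f_1)$, $(f_2)$ (namely $|f(x,t)t|+|F(x,t)|\le\epsilon|t|^p+C_\epsilon|t|^q$) and the compact embedding $\mathbb{X}_0^{s,p}(\Omega)\hookrightarrow L^q(\Omega)$, $q\in(p,p^*)$, from Theorem \ref{thm cpt} are used repeatedly.

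\emph{Step 1 (positivity of $c_s$ and boundedness of a minimizing sequence).} For $u\in\mathcal N$, writing $v=u/\|u\|_{\mathbb{X}_0^{s,p}(\Omega)}$, Lemma \ref{lmn5.2} gives $\Phi(u)=\max_{t\ge0}\Phi(tv)\ge\Phi(\rho v)\ge(\tfrac1p-\epsilon C)\rho^p-C_\epsilon C\rho^q$ for every fixed $\rho>0$, which is a positive constant independent of $u$ once $\epsilon,\rho$ are small; hence $c_s\ge\delta_0>0$. Now take $(u_n)\subset\mathcal N$ with $\Phi(u_n)\to c_s$ and suppose, for contradiction, $\|u_n\|_{\mathbb{X}_0^{s,p}(\Omega)}\to\infty$. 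Put $v_n=u_n/\|u_n\|_{\mathbb{X}_0^{s,p}(\Omega)}$, so that up to a subsequence $v_n\rightharpoonup v$, $v_n\to v$ in $L^q(\Omega)$ and a.e. If $v\neq0$, then $|u_n|\to\infty$ a.e.\ on $\{v\neq0\}$, and since $F$ is bounded below while $F(x,u_n)/|u_n|^p\to+\infty$ there, $(f_4)$ and Fatou's lemma yield $\int_\Omega F(x,u_n)/\|u_n\|_{\mathbb{X}_0^{s,p}(\Omega)}^p\,\d x\to+\infty$, contradicting $\Phi(u_n)/\|u_n\|_{\mathbb{X}_0^{s,p}(\Omega)}^p=\tfrac1p-\int_\Omega F(x,u_n)/\|u_n\|_{\mathbb{X}_0^{s,p}(\Omega)}^p\,\d x\to0$. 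If $v=0$, then for each fixed $\rho>0$ Lemma \ref{lmn5.2} gives $\Phi(u_n)\ge\Phi(\rho v_n)=\tfrac{\rho^p}{p}-\int_\Omega F(x,\rho v_n)\,\d x$, and since $v_n\to0$ in $L^q(\Omega)$ with $\|v_n\|_{L^p(\Omega)}\le C$, the growth bound forces $\limsup_n\int_\Omega F(x,\rho v_n)\,\d x\le\epsilon C\rho^p$; choosing $\epsilon$ small gives $\liminf_n\Phi(u_n)\ge\tfrac{\rho^p}{2p}$ for all $\rho$, again a contradiction. Hence $(u_n)$ is bounded in $\mathbb{X}_0^{s,p}(\Omega)$.

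\emph{Step 2 (nonvanishing of the weak limit; Nehari subsolution inequality).} Up to a subsequence, $u_n\rightharpoonup u_*$ in $\mathbb{X}_0^{s,p}(\Omega)$, $u_n\to u_*$ in $L^p(\Omega)\cap L^q(\Omega)$ and a.e. Since $u_n\in\mathcal N$, the growth bound gives $(1-\epsilon C_1)\|u_n\|_{\mathbb{X}_0^{s,p}(\Omega)}^p\le C_\epsilon\|u_n\|_{L^q(\Omega)}^q$, which, together with $\|u_n\|_{\mathbb{X}_0^{s,p}(\Omega)}\ge\mu_1$ (proved exactly as Lemma \ref{lmn5.1}(a)), forces $\|u_n\|_{L^q(\Omega)}^q\ge c>0$; hence $\|u_*\|_{L^q(\Omega)}>0$, so $u_*\neq0$. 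By weak lower semicontinuity of $\|\cdot\|_{\mathbb{X}_0^{s,p}(\Omega)}^p$ and by Vitali's theorem (the family $f(x,u_n)u_n$ being uniformly integrable via $(f_1)$, $(f_2)$ and $u_n\to u_*$ in $L^p\cap L^q$), one gets $\|u_*\|_{\mathbb{X}_0^{s,p}(\Omega)}^p\le\liminf_n\|u_n\|_{\mathbb{X}_0^{s,p}(\Omega)}^p=\lim_n\int_\Omega f(x,u_n)u_n\,\d x=\int_\Omega f(x,u_*)u_*\,\d x$, i.e.\ $\langle\Phi'(u_*),u_*\rangle\le0$.

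\emph{Step 3 (projection onto $\mathcal N$ and conclusion).} By Lemma \ref{lmn5.2} there is a unique $\tau_0>0$ with $\tau_0 u_*\in\mathcal N$, and since $t\mapsto\langle\Phi'(tu_*),tu_*\rangle/t^p$ is strictly decreasing (from the proof of Lemma \ref{lmn5.2}), the inequality $\langle\Phi'(u_*),u_*\rangle\le0$ forces $\tau_0\le1$. A direct computation using $f\in C^1$ and $(f_5)$ shows that $t\mapsto\tfrac1p f(x,t)t-F(x,t)$ vanishes at $t=0$ and is nonnegative and nondecreasing in $|t|$ (indeed its derivative equals $\tfrac1p(f_u(x,t)t-(p-1)f(x,t))$, which has the sign of $\big(f(x,t)/|t|^{p-2}t\big)'$); since $0<\tau_0\le1$ this gives the pointwise bound $\tfrac1p f(x,\tau_0 u_*)\tau_0 u_*-F(x,\tau_0 u_*)\le\tfrac1p f(x,u_*)u_*-F(x,u_*)$. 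Using $\tau_0 u_*,u_n\in\mathcal N$ and Fatou's lemma (the integrand being nonnegative),
\begin{align*}
c_s&\le\Phi(\tau_0 u_*)=\int_\Omega\Big(\tfrac1p f(x,\tau_0 u_*)\tau_0 u_*-F(x,\tau_0 u_*)\Big)\,\d x\\
&\le\int_\Omega\Big(\tfrac1p f(x,u_*)u_*-F(x,u_*)\Big)\,\d x\le\liminf_n\int_\Omega\Big(\tfrac1p f(x,u_n)u_n-F(x,u_n)\Big)\,\d x=\lim_n\Phi(u_n)=c_s.
\end{align*}
Thus all inequalities are equalities, $\Phi(\tau_0 u_*)=c_s$, and after relabelling $u_*:=\tau_0 u_*$ we obtain an element of $\mathcal N$ at which $c_s$ is attained. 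The principal obstacle is \emph{Step 1}: since only $(f_4)$ is available (not the Ambrosetti--Rabinowitz condition $(f_3)$), boundedness of the minimizing sequence must be extracted from the Nehari characterization of Lemma \ref{lmn5.2}; a secondary subtlety, due to the nonlinearity of the nonlocal term, is that $\Phi$ does not split along $u=u^++u^-$, so the limit passage is performed directly on $\Phi$ and $\langle\Phi',\cdot\rangle$ using weak lower semicontinuity, Vitali's theorem and the monotonicity structure, rather than on single-sign components.
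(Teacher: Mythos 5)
Your proposal is correct and follows essentially the same route as the paper: the positivity of $c_s$ and the boundedness of the minimizing sequence via the dichotomy $w\neq 0$ versus $w=0$ (using $(f_4)$, Fatou, and the weak continuity of $\int_\Omega F(x,\cdot)\d x$ against Lemma \ref{lmn5.2}) is exactly the argument given in the paper. The only difference is that where the paper delegates the final attainment step to ``similar arguments in \cite{LW2004}'', you carry it out explicitly — nontriviality of the weak limit, $\langle\Phi'(u_*),u_*\rangle\le 0$, the projection $\tau_0\le 1$, and the Fatou estimate for the nonnegative, monotone quantity $\mathcal{H}(x,t)=f(x,t)t-pF(x,t)$ — which is precisely the device the paper itself employs later in Lemma \ref{lmn5.5}, so your completion is a sound and self-contained substitute.
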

\begin{proof}
    Using the arguments in Theorem \ref{T2.3}, $\Phi(tu)$ has a strictly local minimum at $t=0$ and $\Phi(tu)\rightarrow -\infty$ as $t \rightarrow \infty$, $\forall$ $u\in \mathbb{X}_0^{s,p}(\Omega)\setminus\{0\}$. Furthermore, using the Lemma \ref{lmn5.2}, for any $u\in \mathbb{X}_0^{s,p}(\Omega)\setminus\{0\}$, there exists $\tau_u>0$ such that 
    $$\Phi(\tau_uu)=\max_{t\in(0,\infty)} \Phi(tu)>0.$$
    Then $$c:=\inf_{u\in \mathbb{X}_0^{s,p}(\Omega)\setminus\{0\}}\max_{t\in(0,\infty)} \Phi(tu)>0,$$
    is well-defined. Consider a minimizing sequence $(u_n)\subset \mathbb{X}_0^{s,p}(\Omega)$ such that
    \begin{align}\label{5.10}
        \Phi(u_n)=\max_{t\in(0,\infty)} \Phi(tu_n)\rightarrow c.
         \end{align}
    We first claim that $(u_n)$ is bounded. If not, then $\|u_n\|_{\mathbb{X}_0^{s,p}(\Omega)}\rightarrow \infty$ and set $w_n=\frac{u_n}{\|u_n\|_{\mathbb{X}_0^{s,p}(\Omega)}}.$ Thus $\|w_n\|_{\mathbb{X}_0^{s,p}(\Omega)}=1,$ that is, $(w_n)$ is a bounded sequence. Then there exists a subsequence, still denoted by $(w_n)$ such that $w_n \rightharpoonup  w\in \mathbb{X}_0^{s,p}(\Omega)$ weakly in $\mathbb{X}_0^{s,p}(\Omega)$. Then from compact embedding, we have $w_n\rightarrow w$ strongly in $L^r(\Omega)$ for $r\in[1,p^*)$ and $w_n(x)\rightarrow w(x)$ $a.e.$ in $\Omega$. Let $w\neq0$ and $\Omega_1=\{x\in \mathbb{X}_0^{s,p}(\Omega): w(x)\neq0 \}$. Then using $(f_4)$ and Fatou's lemma, we deduce that
    \begin{align}\label{eq5.10}
        \frac{1}{p} - \frac{c+o(1)}{\|u_n\|_{\mathbb{X}_0^{s,p}(\Omega)}^p}= \frac{1}{p} - \frac{\Phi(u_n)}{\|u_n\|_{\mathbb{X}_0^{s,p}(\Omega)}^p}=\int_\Omega \frac{F(x,u_n)}{u_n^p}w_n^p \d x \geq \int_{\Omega_1} \frac{F(x,u_n)}{u_n^p}w_n^p \d x.
    \end{align}
    Taking the limit on both sides of the equation \eqref{eq5.10} implies that
    \begin{align}
        \frac{1}{p}\geq \liminf_{n\rightarrow \infty} \int_{\Omega_1} \frac{F(x,u_n)}{u_n^p}w_n^p \d x \geq \int_{\Omega_1} \liminf_{n\rightarrow \infty} \frac{F(x,u_n)}{u_n^p}w_n^p \d x= \infty,
    \end{align}
    which is a contradiction. Again, if $w=0$ and fixed $R>(cp)^\frac{1}{p}$. By $(f_1)$ and $(f_2)$, we say that $\int_\Omega F(x,u) \d x$ is weakly continuous in $\mathbb{X}_0^{s,p}(\Omega)$. Then from \eqref{5.10}, we obtain
    \begin{align}\label{eq5.13}
        c+o(1)=\Phi(u_n)\geq \Phi(Rw_n)=\frac{1}{p}R^p-\int_\Omega F(x,Rw_n) \d x=\frac{1}{p}R^p+o(1),
    \end{align}
   again taking the limit on both sides of the equation \eqref{eq5.13}, we get
   $$c\geq\frac{1}{p}R^p,$$
   which is a contradiction. Thus, $(u_n)$ is bounded in $\mathbb{X}_0^{s,p}(\Omega)$. By similar arguments in \cite[Theorem 2.1]{LW2004}, there exists $u_*\in \mathcal{N}$ such that $c_s=\Phi(u_*)$. 
    This completes the proof.
\end{proof}
\begin{lemma}\label{lmn5.4}
    If $u\in \mathbb{X}_0^{s,p}(\Omega)$ with $u^{\pm}\neq0$, then there exists a unique pair $(k_u,l_u)$ of positive numbers such that 
    $$k_uu^++l_uu^-\in\mathcal{M}.$$
\end{lemma}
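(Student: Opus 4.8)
The plan is to turn the problem into a two--variable maximization. Fix $u$ with $u^{\pm}\neq0$ and set $v:=u^{+}\ge0$, $w:=u^{-}\le0$; since $v$ and $w$ have disjoint supports, $(kv+lw)^{+}=kv$ and $(kv+lw)^{-}=lw$ for all $k,l\ge0$, the local Dirichlet energy and the nonlinear term split, $\int_{\Omega}|\nabla(kv+lw)|^{p}=k^{p}\int_{\Omega}|\nabla v|^{p}+l^{p}\int_{\Omega}|\nabla w|^{p}$ and $\int_{\Omega}F(x,kv+lw)=\int_{\Omega}F(x,kv)+\int_{\Omega}F(x,lw)$, while the Gagliardo part does not split but only satisfies $[kv+lw]_{W^{s,p}(\Omega)}\le k[v]_{W^{s,p}(\Omega)}+l[w]_{W^{s,p}(\Omega)}$; the genuinely new point compared with the purely local case is the nonnegative ``interaction'' produced by $(-\Delta)^{s}_{p}$, coming from pairs $(x,y)$ with $v(x)>0$ and $w(y)<0$, for which $kv(x)-lw(y)=kv(x)+l|w(y)|\ge0$. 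Put $g(k,l):=\Phi(kv+lw)$ on $[0,\infty)^{2}$; then $g\in C^{1}$, and for $k,l>0$ one has $\partial_{k}g(k,l)=\langle\Phi'(kv+lw),v\rangle=\tfrac1k\langle\Phi'(kv+lw),kv\rangle$ and similarly $\partial_{l}g(k,l)=\tfrac1l\langle\Phi'(kv+lw),lw\rangle$, so interior critical points of $g$ are exactly the pairs $(k,l)$ with $kv+lw\in\mathcal M$.

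\emph{Existence.} From $(f_{4})$, for every $A>0$ there is $C_{A}$ with $F(x,t)\ge\tfrac{A}{p}|t|^{p}-C_{A}$ uniformly in $x$; combined with $[kv+lw]_{W^{s,p}(\Omega)}\le k[v]_{W^{s,p}(\Omega)}+l[w]_{W^{s,p}(\Omega)}$ this gives $g(k,l)\le C(k^{p}+l^{p})-\tfrac{A}{p}\big(k^{p}\|v\|_{p}^{p}+l^{p}\|w\|_{p}^{p}\big)+2C_{A}|\Omega|$, which tends to $-\infty$ as $k^{2}+l^{2}\to\infty$ once $A$ is chosen large (recall $v,w\not\equiv0$). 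On the other hand $\Phi(tu)>0$ for small $t>0$ by $(f_{1})$--$(f_{2})$ and the estimate used in the proof of Theorem \ref{T2.3}, so $\sup g>0=g(0,0)$. Hence $g$ attains its maximum at some $(k_{0},l_{0})\neq(0,0)$. If $k_{0}=0$ (so $l_{0}>0$), then $\partial_{k}g(0,l_{0})=\langle\Phi'(l_{0}w),v\rangle=2\int_{\{v>0\}}\int_{\{w<0\}}\frac{(l_{0}|w(y)|)^{p-1}v(x)}{|x-y|^{N+sp}}\,dx\,dy>0$ since $v\not\equiv0$ and $w\not\equiv0$, contradicting maximality; thus $k_{0}>0$, and symmetrically $l_{0}>0$. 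Therefore $(k_{0},l_{0})\in(0,\infty)^{2}$ is an interior critical point of $g$, i.e.\ $k_{u}:=k_{0}$, $l_{u}:=l_{0}$ satisfy $k_{u}u^{+}+l_{u}u^{-}\in\mathcal M$.

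\emph{Uniqueness.} If $(k_{1},l_{1})$ and $(k_{2},l_{2})$ both work, then $\tilde u:=k_{1}u^{+}+l_{1}u^{-}\in\mathcal M$ with $\tilde u^{\pm}\neq0$, and for $s:=k_{2}/k_{1}$, $t:=l_{2}/l_{1}$ one has $s\tilde u^{+}+t\tilde u^{-}=k_{2}u^{+}+l_{2}u^{-}\in\mathcal M$; so it suffices to show $s=t=1$. Writing $v:=\tilde u^{+}$, $w:=\tilde u^{-}$, divide the identity $\langle\Phi'(sv+tw),sv\rangle=0$ by $s^{p}$ and $\langle\Phi'(sv+tw),tw\rangle=0$ by $t^{p}$: the diffusion parts $\int_{\Omega}|\nabla v|^{p}+\iint_{\{v>0\}^{2}}\frac{|v(x)-v(y)|^{p}}{|x-y|^{N+sp}}$ and the analogous expression for $w$ are unchanged, the interaction terms turn into $2\iint\frac{(v(x)+\frac ts|w(y)|)^{p-1}v(x)}{|x-y|^{N+sp}}$ and $2\iint\frac{(|w(x)|+\frac st v(y))^{p-1}|w(x)|}{|x-y|^{N+sp}}$, and the right--hand sides become $\int\frac{f(x,sv)}{(sv)^{p-1}}\,v^{p}$ and $\int\frac{f(x,tw)}{|tw|^{p-2}tw}\,|w|^{p}$. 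Assume without loss of generality $0<t\le s$. Then $t/s\le1$, so the first interaction term does not exceed its value at $\tilde u$; subtracting the identity $\langle\Phi'(\tilde u),\tilde u^{+}\rangle=0$ gives $\int\frac{f(x,sv)}{(sv)^{p-1}}v^{p}\le\int\frac{f(x,v)}{v^{p-1}}v^{p}$, and since $\tau\mapsto f(x,\tau)/\tau^{p-1}$ is strictly increasing on $(0,\infty)$ by $(f_{5})$, this forces $s\le1$; hence $t\le s\le1$. Now $s/t\ge1$, so the second interaction term is at least its value at $\tilde u$; subtracting $\langle\Phi'(\tilde u),\tilde u^{-}\rangle=0$ gives $\int\frac{f(x,tw)}{|tw|^{p-2}tw}|w|^{p}\ge\int\frac{f(x,w)}{|w|^{p-2}w}|w|^{p}$, and since $0<t\le1$ implies $tw\ge w$ on $\{w<0\}$ and $\tau\mapsto f(x,\tau)/(|\tau|^{p-2}\tau)$ is strictly decreasing on $(-\infty,0)$ by $(f_{5})$, this forces $t\ge1$. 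Therefore $t=1$, and then $s=1$ as well.

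The crux is the nonlocality. Unlike for $-\Delta_{p}$, where $\Phi(kv+lw)$ would decouple into two independent one--variable problems, here the $(-\Delta)^{s}_{p}$--interaction couples $k$ and $l$, so one must work genuinely in two variables and, in the uniqueness step, exploit that this interaction is monotone in $k$ and $l$ in precisely the directions that make the comparisons close, together with the two--sided monotonicity in $(f_{5})$ (on $(0,\infty)$ for the positive part, on $(-\infty,0)$ for the negative part); this sign/monotonicity bookkeeping is the delicate part, the remaining estimates being routine.
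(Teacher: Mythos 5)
Your proof is correct. The uniqueness half follows essentially the same route as the paper: you compare the Nehari identities for $s\tilde u^{+}+t\tilde u^{-}$ and for $\tilde u$ itself, use that the nonlocal cross terms are monotone in the ratios $t/s$ and $s/t$, and conclude via the two-sided monotonicity in $(f_5)$; your normalization to $\tilde u\in\mathcal M$ merges the paper's Case I and Case II, and your WLOG convention is just the mirror image of theirs ($k_2\le l_2$ there versus $t\le s$ here, yielding the same conclusions). The existence half is genuinely different: the paper fixes a box $[r,R]^2$, checks the signs of $g_1(k,l)=\langle\Phi'(ku^++lu^-),ku^+\rangle$ and $g_2(k,l)=\langle\Phi'(ku^++lu^-),lu^-\rangle$ on its boundary using the growth bounds from $(f_1)$, $(f_2)$, $(f_4)$ and the monotonicity of $g_1$ in $l$ and of $g_2$ in $k$, and then invokes Miranda's theorem; you instead maximize $(k,l)\mapsto\Phi(ku^++lu^-)$ over the closed quadrant, get coercivity from $(f_4)$, rule out $(0,0)$ by the smallness estimate coming from $(f_1)$--$(f_2)$, and rule out the coordinate axes by the strictly positive one-sided derivative produced by the nonlocal interaction (your computation of $\langle\Phi'(l_0u^-),u^+\rangle$ is correct, using $f(x,0)=0$ and the disjoint supports). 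Your route avoids Miranda's theorem altogether and in fact anticipates the paper's Lemma \ref{lmn5.6}, which carries out exactly this maximization for $u\in\mathcal M$. One bookkeeping remark: in your ``diffusion part'' you list only $\int_\Omega|\nabla v|^p$ and the Gagliardo integral over $\{v>0\}^2$, but pairs $(x,y)$ with one point in $\{v>0\}$ and the other in the zero set of $u$ also contribute terms proportional to $|v(x)|^p$; since these likewise scale as $s^p$ and do not depend on $t/s$, they cancel in your comparison exactly like the terms you did list, so the argument is unaffected, but the displayed decomposition should include them.
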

\begin{proof}
   \underline{\textbf{{First, we prove the existence:}}} For any $k,l>0$, let us consider the functions, $g_1$ and $g_2$, which are defined as
    \begin{align}\label{eq5.15}
    &g_1(k,l)=\langle \Phi'(ku^++lu^-), ku^+\rangle\nonumber\\
    =&\int_{\Omega^+} |\nabla ku^+|^{p} \d x+\int_{\Omega^+}\int_{\Omega^+} \frac{|ku^+(x)-ku^+(y)|^{p}}{|x-y|^{N+sp}} \d x \d y {+\int_{\Omega^+}\int_{\mathbb{R^N}\setminus \Omega} \frac{|ku^+(x)|^{p}}{|x-y|^{N+sp}} \d x \d y}\nonumber\\
        &+\int_{\Omega^+}\int_{\Omega^-}\frac{|ku^+(x)-lu^-(y)|^{p-1}ku^+(x)}{|x-y|^{N+sp}}\d x\d y {+\int_{\mathbb{R^N}\setminus \Omega} \int_{\Omega^+}\frac{|ku^+(y)|^{p}}{|x-y|^{N+sp}} \d x \d y }
        \nonumber\\
        &+\int_{\Omega^-}\int_{\Omega^+}  \frac{|lu^-(x)-ku^+(y)|^{p-1}ku^+(y)}{|x-y|^{N+sp}}\d x\d y - \int_\Omega f(x,ku^+)ku^+ \d x,
    \end{align}
    and \begin{align}\label{eq5.16}
        &g_2(k,l)=\langle \Phi'(ku^++lu^-), lu^-\rangle\nonumber\\  =&\int_{\Omega^-} |\nabla lu^-|^{p} \d x+\int_{\Omega^-}\int_{\Omega^-} \frac{|lu^-(x)-lu^-(y)|^{p}}{|x-y|^{N+sp}} \d x \d y{+\int_{\Omega^-}\int_{\mathbb{R^N}\setminus \Omega} \frac{|lu^-(x)|^{p}}{|x-y|^{N+sp}} \d x \d y} \nonumber\\
        & +\int_{\Omega^+}\int_{\Omega^-}\frac{|ku^+(x)-lu^-(y)|^{p-1}(-lu^-(y))}{|x-y|^{N+sp}}\d x\d y {+\int_{\mathbb{R^N}\setminus \Omega} \int_{\Omega^-}\frac{|-lu^-(y)|^{p}}{|x-y|^{N+sp}} \d x \d y}
        \nonumber\\
        &+\int_{\Omega^-}\int_{\Omega^+}  \frac{|lu^-(x)-ku^+(y)|^{p-1}(-lu^-(x))}{|x-y|^{N+sp}}\d x\d y - \int_\Omega f(x,lu^-)lu^- \d x.
    \end{align}
Using $(f_4)$, for $C_1>0$, there exists $C_2>0$ such that
\begin{align}\label{eq5.17}
    f(x,t)t\geq C_1|t|^p-C_2,~\forall\,x\in \bar{\Omega}~\text{and}~\forall\,t\in \mathbb{R}.
\end{align}
From \eqref{5.2}, \eqref{eq5.17} combined with Lemma \ref{lmn5.1} and $p<q<p^*$, there exists $r_1>0$ (small enough) and $R_1>0$ (large enough) such that 
\begin{align}\label{eq5.18}
   & g_1(k,k)>0,~g_2(k,k)>0,~\forall\,k\in(0,r_1),\\
    \text{and}~& g_1(k,k)<0,~g_2(k,k)<0,~\forall\,k\in(R_1,\infty).\label{eq5.19}
\end{align}
Observe that, for any fixed $k>0$, $g_1(k,l)$ is increasing in $(0,\infty)$ with respect to $l$ and for any fixed $l>0$, $g_2(k,l)$ is increasing in $(0,\infty)$ with respect to $k$. Thus, using \eqref{eq5.18} and \eqref{eq5.19}, there exist $r,R>0$ with $r<R$ such that 
\begin{align}\label{eq5.20}
   & g_1(r,l)>0,~g_1(R,l)<0,~\forall\,l\in(r,R],\\
    \text{and}~& g_2(k,r)>0,~g_2(k,R)<0,~\forall\,k\in(r,R].\label{eq5.21}
\end{align}
Now applying Miranda's theorem\cite{M1940}, there exist $k_u,l_u\in [r,R]$ such that $g_1(k_u,l_u)=0$ and $g_2(k_u,l_u)=0.$ Therefore, we get $k_uu^++l_uu^-\in \mathcal{M}$.\\
\underline{\textbf {We now prove the uniqueness:}} Let  $(k_1,l_1)$ and $(k_2,l_2)$ be two different positive pairs
such that $k_iu^++l_iu^-\in \mathcal{M}$, $i=1,2$. We divide the proof of uniqueness into two separate cases.
\par \textbf{Case I:} Let $u\in \mathcal{M}$. Without loss of generality, we may assume that $(k_1,l_1)=(1,1)$ and $k_2\leq l_2$. For each $u\in \mathbb{X}_0^{s,p}(\Omega)$, we define 
\begin{align}
    A^+(u)=&\int_{\Omega^+} |\nabla u^+|^{p} \d x +\int_{\Omega^+}\int_{\Omega^+} \frac{|u^+(x)-u^+(y)|^{p}}{|x-y|^{N+sp}} \d x \d y \nonumber\\
    &+\int_{\Omega^+}\int_{\Omega^-}\frac{|u^+(x)-u^-(y)|^{p-1}u^+(x)}{|x-y|^{N+sp}}\d x\d y+ {\int_{\Omega^+}\int_{\mathbb{R^N}\setminus \Omega} \frac{|u^+(x)|^{p}}{|x-y|^{N+sp}} \d x \d y} \nonumber \\
     &+\int_{\Omega^-}\int_{\Omega^+}\frac{|u^-(x)-u^+(y)|^{p-1}u^+(y)}{|x-y|^{N+sp}}\d x\d y+{\int_{\mathbb{R^N}\setminus \Omega} \int_{\Omega^+}\frac{|u^+(y)|^{p}}{|x-y|^{N+sp}} \d x \d y, } 
     \end{align}
     and
     \begin{align}
     A^-(u)=&\int_{\Omega^-} |\nabla u^-|^{p} \d x +\int_{\Omega^-}\int_{\Omega^-} \frac{|u^-(x)-u^-(y)|^{p}}{|x-y|^{N+sp}} \d x \d y \nonumber\\
    &+\int_{\Omega^+}\int_{\Omega^-}\frac{|u^+(x)-u^-(y)|^{p-1}(-u^-(y))}{|x-y|^{N+sp}}\d x\d y {+\int_{\Omega^-}\int_{\mathbb{R^N}\setminus \Omega} \frac{|u^-(x)|^{p}}{|x-y|^{N+sp}} \d x \d y}\nonumber \\
    &+\int_{\Omega^-}\int_{\Omega^+}\frac{|u^-(x)-u^+(y)|^{p-1}(-u^-(x))}{|x-y|^{N+sp}}\d x\d y+{\int_{\mathbb{R^N}\setminus \Omega} \int_{\Omega^-}\frac{|-u^-(y)|^{p}}{|x-y|^{N+sp}} \d x \d y }.\nonumber
    \end{align}
    Since, $u\in \mathcal{M}$, we have $\langle\Phi'(u),u^\pm \rangle=0$, Thus
    \begin{align}\label{eq5.22}
        A^+(u)=\int_\Omega f(x,u^+)u^+ \d x,\\
        ~\text{and}~
        A^-(u)=\int_\Omega f(x,u^-)u^- \d x.\label{eq5.23} 
    \end{align}
    Again using $\langle \Phi'(k_2u^++l_2u^-), k_2u^+\rangle=\langle \Phi'(k_2u^++l_2u^-), l_2u^-\rangle=0$, we obtain
    \begin{align}\label{eq5.24}
        k_2^p(A^+(u)+B_1^+(u)+B_2^+(u))= \int_\Omega f(x,k_2u^+)k_2u^+ \d x\\ ~\text{and}~
        l_2^p(A^-(u)+B_1^-(u)+B_2^-(u))=\int_\Omega f(x,l_2u^-)l_2u^- \d x, \label{eq5.25}
    \end{align}
    where
    \begin{align}\label{eq5.26}
        B_1^+(u)=&\int_{\Omega^+}\int_{\Omega^-}\frac{|u^+(x)-{l_2{k_2}^{-1}}u^-(y)|^{p-1}u^+(x)}{|x-y|^{N+sp}}\d x\d y\nonumber\\
        &-\int_{\Omega^+}\int_{\Omega^-}\frac{|u^+(x)-u^-(y)|^{p-1}u^+(x)}{|x-y|^{N+sp}}\d x\d y, \\ \label{eq5.27}
        B_2^+(u)=&\int_{\Omega^-}\int_{\Omega^+}\frac{|l_2{k_2}^{-1}u^-(x)-u^+(y)|^{p-1}u^+(y)}{|x-y|^{N+sp}}\d x\d y\nonumber \\
        &- \int_{\Omega^-}\int_{\Omega^+} \frac{|u^-(x)-u^+(y)|^{p-1}u^+(y)}{|x-y|^{N+sp}}\d x\d y, \\ \label{eq5.28}
        B_1^-(u)=&\int_{\Omega^+}\int_{\Omega^-}\frac{|k_2{l_2}^{-1}u^+(x)-u^-(y)|^{p-1}(-u^-(y))}{|x-y|^{N+sp}} \d x\d y\nonumber\\
        &- \int_{\Omega^+}\int_{\Omega^-}\frac{|u^+(x)-u^-(y)|^{p-1}(-u^-(y))}{|x-y|^{N+sp}}\d x\d y, \\ \label{eq5.29}
        B_2^-(u)=&\int_{\Omega^-}\int_{\Omega^+}\frac{|u^-(x)-k_2{l_2}^{-1}u^+(y)|^{p-1}(-u^-(x))}{|x-y|^{N+sp}} \d x\d y\nonumber\\
        &- \int_{\Omega^-}\int_{\Omega^+}\frac{|u^-(x)-u^+(y)|^{p-1}(-u^-(x))}{|x-y|^{N+sp}}\d x\d y.
    \end{align}
    Since $k_2\leq l_2$, we conclude that $B_1^+(u)$ and $B_2^+(u)\geq 0.$ Then using the equations \eqref{eq5.22} and \eqref{eq5.24}, we get
    \begin{align}\label{eq5.30}
        0\leq \int_\Omega \left[ \frac{f(x,k_2u^+)}{|k_2u^+|^{p-2}k_2u^+}-\frac{f(x,u^+)}{|u^+|^{p-2}u^+}\right]|u^+|^p \d x.
    \end{align}
    Therefore, $(f_5)$ asserts that $k_2\geq1$. On the other hand, $B_1^-(u)$ and $B_2^-(u)\leq0$. Thus using \eqref{eq5.23} and \eqref{eq5.25}, we obtain
    \begin{align}\label{eq5.31}
        0\geq \int_\Omega \left[ \frac{f(x,l_2u^-)}{|l_2u^-|^{p-2}l_2u^-}-\frac{f(x,u^-)}{|u^-|^{p-2}u^-}\right]|u^-|^p \d x,
    \end{align}
    which gives $l_2\leq1$ using $(f_5)$. Hence $k_2=l_2=1$.
    \par \textbf{Case II:} Let $u\notin \mathcal{M}$ and $v_1=k_1u^++l_1u^-$, $v_2=k_2u^++l_2u^-$. Proceeding with a similar arguments in \textbf{Case I}, we get $\frac{k_2}{k_1}=\frac{l_2}{l_1}=1$. Hence $(k_1,l_1)=(k_2,l_2)$. This completes the proof.
\end{proof}
\begin{lemma}\label{lmn5.5}
    According to the assumptions of Theorem \ref{T2.4}, there exists $u\in \mathcal{M}$ such that $\Phi(u)=m_s$, where $m_s=\inf_{u\in \mathcal{M}}\Phi(u)$. 
\end{lemma}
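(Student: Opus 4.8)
The plan is to realize $m_s$ as a limit of a minimizing sequence in $\mathcal{M}$, extract a weak limit, and then use the projection Lemma \ref{lmn5.4} together with the weak semicontinuity of $\Phi$ to show the limit is itself a minimizer. First I would pick $(u_n)\subset\mathcal{M}$ with $\Phi(u_n)\to m_s$. The coercivity-type estimate behind Lemma \ref{lmn5.3} (using $(f_4)$ in place of $(f_3)$, exactly as there) shows $(u_n)$ is bounded in $\mathbb{X}_0^{s,p}(\Omega)$: indeed $\Phi(u_n)-\frac1\mu\langle\Phi'(u_n),u_n\rangle$ controls $\|u_n\|^p$ up to lower-order terms, and on $\mathcal{M}$ we have $\langle\Phi'(u_n),u_n\rangle=\langle\Phi'(u_n),u_n^+\rangle+\langle\Phi'(u_n),u_n^-\rangle=0$. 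Passing to a subsequence, $u_n\rightharpoonup u$ in $\mathbb{X}_0^{s,p}(\Omega)$, $u_n\to u$ strongly in $L^r(\Omega)$ for $r\in[1,p^*)$ and a.e.\ in $\Omega$ by Theorem \ref{thm cpt}. Since $u_n^\pm\rightharpoonup u^\pm$ as well (the maps $v\mapsto v^\pm$ are continuous and bounded on $\mathbb{X}_0^{s,p}(\Omega)$), Lemma \ref{lmn5.1}$(b)$ gives $\int_\Omega|u_n^\pm|^q\,dx\geq\mu_2$, and strong $L^q$ convergence forces $\int_\Omega|u^\pm|^q\,dx\geq\mu_2>0$, so in particular $u^\pm\neq 0$.

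Next I would apply Lemma \ref{lmn5.4} to the limit $u$: there is a unique pair $(k_u,l_u)$ of positive numbers with $\bar u:=k_uu^++l_uu^-\in\mathcal{M}$. The goal is to show $k_u=l_u=1$ and $\Phi(\bar u)=m_s$, which simultaneously proves the infimum is attained at $\bar u$. For this I would use weak lower semicontinuity: the map
\[
u\mapsto \tfrac1p\Big(\int_\Omega|\nabla u|^p\,dx+\int_\Omega\int_\Omega\frac{|u(x)-u(y)|^p}{|x-y|^{N+sp}}\,dx\,dy\Big)
\]
is convex and strongly continuous, hence weakly lower semicontinuous, while $\int_\Omega F(x,u)\,dx$ is weakly continuous under $(f_1),(f_2)$ (by strong $L^r$ convergence for $r<p^*$ and the growth bound). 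The same applies to the localized ``partial'' energies; writing $\Phi(u)=\Phi^+(u)+\Phi^-(u)+(\text{cross terms})$, one sees that each ingredient passing to the limit can only drop. Concretely, since $\langle\Phi'(u_n),u_n^\pm\rangle=0$ we have, using $(f_5)$ and the characterization in Lemma \ref{lmn5.2}/Lemma \ref{lmn5.4}, that $\Phi(u_n)=\Phi(u_n)\geq\Phi(k_uu_n^++l_uu_n^-)$ for the specific scalars $(k_u,l_u)$ — because on $\mathcal{M}$ the point $u_n$ maximizes $\Phi(ku_n^++lu_n^-)$ over all $k,l>0$, a fact I would extract from the monotonicity structure used in the proof of Lemma \ref{lmn5.4} (the functions $g_1,g_2$ are increasing, so $(1,1)$ is the global max of $(k,l)\mapsto\Phi(ku_n^++lu_n^-)$ on $\mathcal{M}$). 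Then
\[
m_s=\lim_n\Phi(u_n)\ \geq\ \liminf_n\Phi(k_uu_n^++l_uu_n^-)\ \geq\ \Phi(k_uu^++l_uu^-)=\Phi(\bar u)\ \geq\ m_s,
\]
the last inequality because $\bar u\in\mathcal{M}$. Hence $\Phi(\bar u)=m_s$ and $\bar u$ is the desired minimizer.

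The main obstacle is justifying the middle inequality $\Phi(u_n)\geq\Phi(k_uu_n^++l_uu_n^-)$, i.e.\ that $(1,1)$ is really a \emph{global} maximum of $(k,l)\mapsto\Phi(ku_n^++lu_n^-)$ over $(0,\infty)^2$ for $u_n\in\mathcal{M}$, not merely a critical point. For the local $p$-Laplacian this follows from a clean two-variable version of Lemma \ref{lmn5.2}, but here the fractional term contributes cross interactions $\Omega^+\times\Omega^-$ that are not separately monotone in the naive way; one has to argue as in Lemma \ref{lmn5.4}, exploiting that $g_1(k,l)$ is increasing in $l$ and $g_2(k,l)$ increasing in $k$ together with $(f_5)$, to pin down the sign of $\partial_k\Phi$ and $\partial_l\Phi$ along suitable rays and conclude $(1,1)$ dominates. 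The second delicate point is the weak semicontinuity of the localized Gagliardo pieces and the cross terms (e.g.\ $\int_{\Omega^+}\int_{\Omega^-}\frac{|u^+(x)-u^-(y)|^p}{|x-y|^{N+sp}}$), where one uses that $u_n^\pm\to u^\pm$ in $L^p$ and a.e.\ plus Fatou, being careful that $u^+$ and $u^-$ have disjoint supports so the a.e.\ convergence of $u_n$ transfers to $u_n^\pm$ on the relevant sets. Once these are in place the chain of inequalities closes and, as a bonus, strong convergence $u_n\to\bar u$ can be recovered if needed. Finally, standard arguments (as in Lemma \ref{lmn4.7}--Lemma \ref{lmn4.9} or a Lagrange-multiplier/deformation argument on $\mathcal{M}$) show the minimizer $\bar u$ is in fact a critical point of $\Phi$, hence a least-energy sign-changing weak solution of \eqref{MP}.
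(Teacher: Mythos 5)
Your argument follows the paper up to the extraction of the weak limit $u$ with $u^{\pm}\neq 0$ and its projection onto $\mathcal{M}$ via Lemma \ref{lmn5.4}; the genuine gap is the middle inequality $\Phi(u_n)\geq\Phi(k_uu_n^++l_uu_n^-)$. This rests on the claim that, for $u_n\in\mathcal{M}$, the pair $(1,1)$ is a \emph{global} maximum of $(k,l)\mapsto\Phi(ku_n^++lu_n^-)$ on $[0,\infty)^2$, and the justification you sketch --- that $g_1(k,l)$ is increasing in $l$ and $g_2(k,l)$ is increasing in $k$ --- does not deliver it: in Lemma \ref{lmn5.4} those monotonicities only produce the sign changes needed for Miranda's theorem and the comparison of two critical pairs; they say nothing about comparing the value of $\Phi$ at $(1,1)$ with its value at arbitrary $(k,l)$. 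The statement you need is exactly Lemma \ref{lmn5.6} of the paper, whose proof is not a one-liner: it uses coercivity of $I_u(k,l)=\Phi(ku^++lu^-)$ coming from $(f_4)$ to get a global maximizer, the function $\mathcal{H}(x,t)=f(x,t)t-pF(x,t)$ (increasing on $(0,\infty)$, decreasing on $(-\infty,0)$ by $(f_5)$) together with $\langle\Phi'(u^{\pm}),u^{\pm}\rangle<0$ to exclude maximizers on the boundary $k=0$ or $l=0$, and then the uniqueness part of Lemma \ref{lmn5.4} to identify the interior maximizer with $(1,1)$. Since Lemma \ref{lmn5.6} is proved independently of Lemma \ref{lmn5.5}, your route can be closed by invoking it without circularity, but as written the decisive step is asserted, not proved.

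For comparison, the paper closes the argument differently and never needs the global-maximum property here: from $\langle\Phi'(u_n),u_n^{\pm}\rangle=0$, Fatou's lemma and the compact embedding give $A^{\pm}(u^*)\leq\int_\Omega f(x,(u^*)^{\pm})(u^*)^{\pm}\d x$, which, combined with the equations defining $(k^*,l^*)$ from Lemma \ref{lmn5.4} and with $(f_5)$, forces $0<k^*\leq l^*\leq 1$; then the chain $m_s\leq\Phi(k^*(u^*)^++l^*(u^*)^-)=\frac1p\int_\Omega\mathcal{H}(x,k^*(u^*)^++l^*(u^*)^-)\d x\leq\frac1p\int_\Omega\mathcal{H}(x,u^*)\d x\leq\liminf_{n\rightarrow\infty}\frac1p\int_\Omega\mathcal{H}(x,u_n)\d x=\lim_{n\rightarrow\infty}\Phi(u_n)=m_s$ squeezes everything and yields $k^*=l^*=1$ and $\Phi(u^*)=m_s$. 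Two smaller points: weak convergence $u_n^{\pm}\rightharpoonup u^{\pm}$ does not follow from ``continuity'' of $v\mapsto v^{\pm}$ (these maps are not linear); it follows from boundedness of $(u_n^{\pm})$ in $\mathbb{X}_0^{s,p}(\Omega)$ and identification of the weak limit through the a.e.\ convergence. Also, under the hypotheses of Theorem \ref{T2.4} only $(f_4)$ is available, so the estimate via $\Phi(u_n)-\frac1\mu\langle\Phi'(u_n),u_n\rangle$ is not; boundedness must be obtained, as in Lemma \ref{lmn5.3}, from $\Phi(u_n)=\max_{t\geq0}\Phi(tu_n)$ (Lemma \ref{lmn5.2}) and the normalized-sequence contradiction argument.
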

\begin{proof}
    Observe that $\mathcal{M}\neq \emptyset$, which makes the minimization problem $m_s=\inf_{u\in \mathcal{M}}\Phi(u)$ well-defined. Let $(u_n)\subset\mathcal{M}$ be a minimizing sequence such that $\Phi(u_n)\rightarrow m_s$ as $n\rightarrow \infty$. Proceeding with similar arguments in Lemma \ref{lmn5.3}, we get $(u_n)$ is uniformly bounded in $\mathbb{X}_0^{s,p}(\Omega)$. Thus, there exists a subsequence $(u_n)$ (still denoted by $(u_n)$) and $u^*\in \mathbb{X}_0^{s,p}(\Omega)$ such that 
    \begin{align}\label{eq5.32}
        u_n^\pm &\rightharpoonup (u^*)^\pm\in \mathbb{X}_0^{s,p}(\Omega)~\text{weakly in}~ \mathbb{X}_0^{s,p}(\Omega),\\\label{eq5.33}
        u_n^\pm &\rightarrow (u^*)^\pm~\text{strongly in}~ L^r(\Omega)~\text{for}~r\in[1,p^*),\\ \label{eq5.34}
        u_n(x)& \rightarrow u^*(x)~a.e.~\text{in}~\Omega.
    \end{align}
    By Lemma \ref{lmn5.1}, we have $(u^*)^\pm\neq0$. Moreover, by standard arguments (see \cite{W1996}), the conditions $(f_1), (f_2)$ and Theorem \ref{thm cpt} gives,
    \begin{align}\label{eq5.35}
        \lim_{n\rightarrow \infty} \int_\Omega f(x,u_n^\pm)u_n^\pm \d x& =\int_\Omega f(x,(u^*)^\pm)(u^*)^\pm \d x,\\
        \text{and}\nonumber\\\label{eq5.36}
        \lim_{n\rightarrow \infty} \int_\Omega F(x,u_n^\pm) \d x& =\int_\Omega F(x,(u^*)^\pm) \d x.
    \end{align}
    From Lemma \ref{lmn5.4}, there exist $k^*,l^*>0$ such that $k^*(u^*)^++l^*(u^*)^-\in \mathcal{M}$, which implies that
    \begin{align}\label{eq5.37}
        (k^*)^p(A^+(u^*)+B_1^+(u^*)+B_2^+(u^*))= \int_\Omega f(x,k^*(u^*)^+)k^*(u^*)^+ \d x,\\ \label{eq5.38}
        \text{and}~(l^*)^p(A^-(u^*)+B_1^-(u^*)+B_2^-(u^*))=\int_\Omega f(x,l^*(u^*)^-)l^*(u^*)^- \d x.
    \end{align}
    We first claim  that $k^*, l^*\leq 1$. Since, $(u_n)\subset\mathcal{M}$ is a minimizing sequence, we have $$\langle \Phi'(u_n),u_n^\pm \rangle=0,$$ that is
    \begin{align}\label{eq5.39}
        A^\pm(u_n)=\int_\Omega f(x,u_n^\pm)u^\pm \d x.
    \end{align}
    On using the inequalities \eqref{eq5.32}-\eqref{eq5.38} and Fatou lemma, deduce that
    \begin{align}\label{eq5.40}
        A^\pm(u^*)\leq \int_\Omega f(x,(u^*)^\pm)(u^*)^\pm \d x.
    \end{align}
    Now, without loss of generality, we take $k^*\leq l^*$. Since $B_1^-(u^*), B_2^-(u^*)\leq 0$, we obtain
    \begin{align}\label{eq5.41}
         0\leq \int_\Omega \left[ \frac{f(x,(u^*)^-)}{|(u^*)^-|^{p-2}(u^*)^-}-\frac{f(x,l^*(u^*)^-)}{|l^*(u^*)^-|^{p-2}l^*(u^*)^-}\right]|(u^*)^-|^p \d x.
    \end{align}
    Thus, using $(f_5)$ in \eqref{eq5.41}, we get $l^*\leq1$. Hence, $0<k^*\leq l^*\leq 1$.\\
    Next we prove that $k^*=1$ and  $l^*=1$, that is $u^*\in \mathcal{M}.$
    \par Let $\mathcal{H}(.,t)=f(.,t)t-pF(.,t)$. Then using $(f_5)$, we have 
$\mathcal{H}(.,t)$ is increasing in $t\in(0,+\infty)$, $\mathcal{H}(.,t)$ is decreasing in $t\in(-\infty, 0)$ and $\mathcal{H}(.,t)\geq 0$. Therefore, by Fatou lemma, we get
\begin{align}
    m_s &\leq \Phi(k^*(u^*)^++l^*(u^*)^-)\nonumber\\
    &=\Phi(k^*(u^*)^++l^*(u^*)^-)-\frac{1}{p}\langle \Phi'(k^*(u^*)^++l^*(u^*)^-),k^*(u^*)^++l^*(u^*)^-\rangle \nonumber\\
    &= \frac{1}{p} \int_\Omega \mathcal{H}(x,k^*(u^*)^++l^*(u^*)^-) \d x \nonumber \\
    &= \frac{1}{p}\bigg[\int_{\Omega^+} \mathcal{H}(x,k^*(u^*)^+ \d x +\int_{\Omega^-} \mathcal{H}(x,l^*(u^*)^-) \d x \bigg]\nonumber \\
    & \leq \frac{1}{p}\bigg[\int_{\Omega^+} \mathcal{H}(x,(u^*)^+ \d x +\int_{\Omega^-} \mathcal{H}(x,(u^*)^-) \d x \bigg]=\frac{1}{p}\int_{\Omega} \mathcal{H}(x,(u^*)^++(u^*)^-) \d x\nonumber \\
    &=\frac{1}{p}\int_{\Omega} \mathcal{H}(x,u^*) \d x\leq\frac{1}{p} \liminf_{n\rightarrow\infty}\int_\Omega \mathcal{H}(x,u_n) \d x =\lim_{n\rightarrow\infty} \bigg[\Phi(u_n)-\frac{1}{p}\langle\Phi'(u_n),u_n\rangle\bigg]\nonumber \\
    &=\lim_{n\rightarrow\infty} \Phi(u_n)=m_s.\nonumber
\end{align}
Hence, we conclude $k^*=l^*=1$ and $\Phi(u^*)=m_s$. This completes the proof.
\end{proof}
\begin{lemma}\label{lmn5.6}
    If $u\in \mathcal{M}$, then we have
    $$\Phi(u)>\Phi(k u^++lu^-),~\forall\,k,l\geq 0~\text{such that}~(k,l)\neq(1,1).$$
\end{lemma}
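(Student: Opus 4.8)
The plan is to study the $C^1$ function $\varphi\colon[0,\infty)^2\to\mathbb{R}$, $\varphi(k,l)=\Phi(ku^++lu^-)$, and to show that for $u\in\mathcal{M}$ it attains a \emph{strict} global maximum precisely at $(1,1)$; since $\varphi(1,1)=\Phi(u)$, this is exactly the assertion. First I would record the algebraic decomposition coming from the disjoint supports of $u^{+}$ and $u^{-}$: the local term splits exactly, $\int_\Omega|\nabla(ku^++lu^-)|^p=k^p\int_\Omega|\nabla u^+|^p+l^p\int_\Omega|\nabla u^-|^p$, while for the Gagliardo term, on $\{u>0\}\times\{u<0\}$ one has $0\le ku^+(x)-lu^-(y)\le\max(k,l)\,|u(x)-u(y)|$, so the nonlocal cross‐interaction is bounded by $\max(k,l)^p\|u\|_{\mathbb{X}_0^{s,p}(\Omega)}^p$. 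Consequently $\|ku^++lu^-\|_{\mathbb{X}_0^{s,p}(\Omega)}^p\le C(u)\,(k^p+l^p)$.

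Second, I would prove coercivity of $-\varphi$. Since $u^\pm\neq 0$ we have $\int_\Omega|u^\pm|^p>0$, and $(f_4)$ yields, for every $C_1>0$, a constant $C_2=C_2(C_1)$ with $F(x,t)\ge \tfrac{C_1}{p}|t|^p-C_2$. Writing $\int_\Omega F(x,ku^++lu^-)=\int_{\{u>0\}}F(x,ku^+)+\int_{\{u<0\}}F(x,lu^-)$ and choosing $C_1$ large relative to $C(u)$ gives $\varphi(k,l)\le -c(k^p+l^p)+C'\to-\infty$ as $k+l\to\infty$. Hence $\varphi$ attains its supremum over $[0,\infty)^2$ at some point $(k^*,l^*)$.

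Third, I would exclude the boundary of the quadrant. Near the origin $(f_1)$ gives $F(x,t)=o(|t|^p)$, so $\varphi(k,l)>0$ for small $(k,l)\neq(0,0)$; since $u\in\mathcal{M}\subset\mathcal{N}$ forces $\Phi(u)>0$ (Lemma~\ref{lmn5.2}), the origin is not the maximiser. For a point on a positive semi‑axis, say $l=0$, $k>0$, the disjoint‑support structure makes the local and nonlinear contributions to $\langle\Phi'(ku^+),u^-\rangle$ vanish (and $f(x,0)=0$), leaving
\[
\partial_l\varphi(k,0)=\langle\Phi'(ku^+),u^-\rangle
=2k^{p-1}\int_{\{u>0\}}\int_{\{u<0\}}\frac{(u^+(x))^{p-1}(-u^-(y))}{|x-y|^{N+sp}}\,\d x\,\d y>0,
\]
the last integral being finite (duality pairing in $\mathbb{X}_0^{s,p}(\Omega)$) and strictly positive (positive integrand on a set of positive measure). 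Thus $\varphi(k,l)>\varphi(k,0)$ for small $l>0$, so no point of the $k$‑axis, and symmetrically of the $l$‑axis, can be a maximiser; hence $(k^*,l^*)$ lies in the open quadrant.

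Finally, $\varphi$ being $C^1$, $(k^*,l^*)$ is a critical point, i.e.\ $\langle\Phi'(k^*u^++l^*u^-),u^+\rangle=\langle\Phi'(k^*u^++l^*u^-),u^-\rangle=0$; as $k^*,l^*>0$ these are equivalent to $k^*u^++l^*u^-\in\mathcal{M}$, and Lemma~\ref{lmn5.4} applied to $u\in\mathcal{M}$ (for which the only admissible pair is $(1,1)$) forces $(k^*,l^*)=(1,1)$. The same reasoning shows $(1,1)$ is the unique interior critical point, hence the unique interior maximiser, and the boundary analysis excludes boundary maximisers; therefore $(1,1)$ is the \emph{unique} global maximiser of $\varphi$, which gives $\Phi(u)=\varphi(1,1)>\varphi(k,l)=\Phi(ku^++lu^-)$ for every $(k,l)\neq(1,1)$, $k,l\ge 0$. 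The main technical obstacle is the coercivity step — precisely, verifying that the nonlocal cross term is genuinely $O(\max(k,l)^p)$ with a finite $u$‑dependent constant — together with the sign computation for $\partial_l\varphi(k,0)$; once those and Lemma~\ref{lmn5.4} are in place the rest is bookkeeping.
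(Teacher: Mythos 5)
Your proposal is correct and follows the same overall skeleton as the paper: set $I_u(k,l)=\Phi(ku^++lu^-)$, use $(f_4)$ for coercivity so that a global maximizer exists on $[0,\infty)^2$, rule out the boundary of the quadrant, and then identify the interior maximizer with $(1,1)$ through the first-order conditions together with the uniqueness part of Lemma \ref{lmn5.4}. The one step you handle differently is the exclusion of the semi-axes: the paper argues that a maximizer $(k_0,0)$ would force $k_0u^+\in\mathcal{N}$, deduces $k_0\le 1$ from $(f_5)$ and $\langle\Phi'(u^+),u^+\rangle<0$, and then reaches the contradiction $I_u(k_0,0)<I_u(1,1)$ via monotonicity of $\mathcal{H}(x,t)=f(x,t)t-pF(x,t)$; you instead compute the one-sided derivative $\partial_l I_u(k,0)=\langle\Phi'(ku^+),u^-\rangle$ and note it is strictly positive, since the gradient term and the $f$-term vanish by the disjoint supports (and $f(x,0)=0$), while the nonlocal cross-interaction contributes $2k^{p-1}\int_{\{u>0\}}\int_{\{u<0\}}\frac{(u^+(x))^{p-1}(-u^-(y))}{|x-y|^{N+sp}}\,dx\,dy>0$. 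That computation is correct (it is the same cross-term positivity exploited in Lemma \ref{lmn5.1}) and gives a shorter boundary exclusion that avoids $(f_5)$ at this step; note, however, that it relies essentially on the nonlocal coupling (for a purely local operator this derivative vanishes on the axis), whereas the paper's $\mathcal{H}$-comparison works independently of that structure. Two minor remarks: excluding the origin needs only $\Phi(u)>0$, which holds because $u\in\mathcal{M}\subset\mathcal{N}$ gives $\Phi(u)=\frac{1}{p}\int_\Omega\mathcal{H}(x,u)\,dx>0$; and the ``technical obstacle'' you flag in the coercivity step is harmless, since $\|ku^++lu^-\|_{\mathbb{X}_0^{s,p}(\Omega)}\le k\|u^+\|_{\mathbb{X}_0^{s,p}(\Omega)}+l\|u^-\|_{\mathbb{X}_0^{s,p}(\Omega)}$ already yields the required $O(k^p+l^p)$ bound.
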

\begin{proof}
    For any $u\in \mathbb{X}_0^{s,p}(\Omega)$ such that $u^\pm\neq0$, we define $I_u:[0,\infty)\cross[0,\infty)\rightarrow \mathbb{R}$ as follows
    $$I_u(k,l)=\Phi(k u^++lu^-),~\forall\,k,l\geq0.~$$
    By the condition $(f_4)$, we get 
    $$\lim_{|(k,l)|\rightarrow\infty}I_u(k,l)=-\infty.$$
    Therefore, $I_u$ admits a global maximum at some $(k_0,l_0)\in [0,\infty)\cross[0,\infty)$. We first prove that $k_0, l_0 > 0$ by violating the following three cases.\\
    $(a)$ $k_0=l_0=0,$\\
    $(b)$ $k_0>0$, $l_0=0,$\\
    $(c)$ $k_0=0$, $l_0>0$.\\
    Let $l_0=0$, then $\Phi(k_0u^+)\geq \Phi(k u^++lu^-),~\forall\,k,l\geq 0$. 
    We obtain, $\langle \Phi'(k_0u^+),k_0u^+\rangle =0$, that is
    \begin{align}\label{eq5.42}
        k_0^p\|u^+\|_{\mathbb{X}_0^{s,p}(\Omega)}=\int_\Omega f(x,k_0u^+) k_0u^+ \d x.
    \end{align}
    Since $u\in \mathcal{M}$, from Lemma \ref{lmn5.1}, we get $\langle \Phi'(u^+),u^+ \rangle<0$, implying that
    \begin{align}\label{eq5.43}
        \|u^+\|_{\mathbb{X}_0^{s,p}(\Omega)}<\int_\Omega f(x,u^+) u^+ \d x.
    \end{align}
    Now using the inequalities \eqref{eq5.42} and \eqref{eq5.43}, we obtain 
    \begin{align}\label{eq5.44}
        0< \int_\Omega \left[ \frac{f(x,u^+)}{|u^+|^{p-2}u^+}-\frac{f(x,k_0u^+)}{|k_0u^+|^{p-2}k_0u^+}\right]|u^+|^p \d x.
    \end{align}
    Moreover, using $(f_5)$ and \eqref{eq5.44}, we obtain $k_0\leq 1$. We know $\mathcal{H}(.,t)\geq 0$, $\mathcal{H}(.,t)$ is increasing in $(0,+\infty)$ and decreasing in $(-\infty, 0)$. Therefore, we have
    \begin{align}
        I_u(k_0,0)& =\Phi(k_0u^+) \nonumber\\
        &=\Phi(k_0u^+)-\frac{1}{p}\langle \Phi'(k_0u^+),k_0u^+\rangle \nonumber\\
        &=\frac{1}{p}\int_\Omega \mathcal{H}(x,k_0u^+)\d x=\frac{1}{p}\int_{\Omega^+}\mathcal{H}(x,k_0u^+)\d x\nonumber\\
        &\leq\frac{1}{p}\int_{\Omega^+}\mathcal{H}(x,u^+)\d x\nonumber\\
        &<\frac{1}{p} \bigg[ \int_{\Omega^+}\mathcal{H}(x,u^+)\d x+\int_{\Omega^-}\mathcal{H}(x,u^-)\d x\bigg]=\frac{1}{p}\int_{\Omega}\mathcal{H}(x,u)\d x\nonumber\\
        &=\Phi(u)-\frac{1}{p}\langle \Phi'(u),u\rangle =\Phi(u)=I_u(1,1), \nonumber
    \end{align}
    which gives a contradiction. Thus, $l_0>0$. By similar arguments, we conclude that $k_0>0$. Now following the arguments as in Lemma \ref{lmn5.4}, we get that $(1,1)$ is the unique critical point of $I_u$ in $(0,\infty)\cross(0,\infty)$. Hence, $I_u$ have global maximum at $(1,1)$, that is 
\begin{align}
    \Phi(u)=I(1,1)>I_u(k,l)=\Phi(k u^++lu^-),~\forall\,k,l>0~\text{such that}~(k,l)\neq(1,1).
\end{align}
 This completes the proof.
\end{proof}

\begin{lemma}\label{lmn5.7}
    If $\Phi(u^*)=m_s$ for some $u^*\in \mathcal{M}$, then $\Phi'(u^*)=0.$
\end{lemma}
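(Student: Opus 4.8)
The plan is to argue by contradiction, exploiting the variational characterisation of $m_s$ through the two--parameter family $k(u^*)^++l(u^*)^-$ together with a localised deformation and the Poincar\'e--Miranda (Brouwer degree) structure already made explicit in Lemma \ref{lmn5.4}. Suppose $\Phi'(u^*)\neq 0$. Since $\Phi\in C^1$, by continuity there exist $\lambda>0$ and $\delta_0>0$ with $\|\Phi'(w)\|_{{\mathbb{X}_0^{s,p}(\Omega)}^*}\geq\lambda$ for all $w$ with $\|w-u^*\|_{\mathbb{X}_0^{s,p}(\Omega)}\leq 3\delta_0$. As $\Phi$ satisfies $(PS)_{m_s}$ (Lemma \ref{lmn4.8}), one applies the quantitative deformation lemma --- for $1<p\leq 2$ using a \emph{localised} pseudo-gradient field built as in Lemma \ref{lmn4.7} rather than $\Phi'$ itself --- to obtain, for each small $\varepsilon>0$, a continuous map $\eta:\mathbb{X}_0^{s,p}(\Omega)\to\mathbb{X}_0^{s,p}(\Omega)$ such that: $(i)$ $\eta(w)=w$ whenever $\Phi(w)\notin[m_s-2\varepsilon,m_s+2\varepsilon]$ or $\|w-u^*\|_{\mathbb{X}_0^{s,p}(\Omega)}\geq\delta_0$; $(ii)$ $\Phi(\eta(w))\leq\Phi(w)$ for all $w$; $(iii)$ $\Phi(\eta(w))\leq m_s-\varepsilon$ whenever $\Phi(w)\leq m_s+\varepsilon$ and $\|w-u^*\|_{\mathbb{X}_0^{s,p}(\Omega)}\leq\delta_0$ (shrinking $\delta_0$ if needed).

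Next I would introduce $\gamma(k,l):=k(u^*)^++l(u^*)^-$ on the square $D:=[1-\rho,1+\rho]^2$. By Lemma \ref{lmn5.6}, $\Phi(\gamma(k,l))<m_s$ for $(k,l)\neq(1,1)$ while $\Phi(\gamma(1,1))=\Phi(u^*)=m_s$, so by compactness $\beta:=\max_{(k,l)\in\partial D}\Phi(\gamma(k,l))<m_s$. Choose $\rho$ small enough that $\sup_{(k,l)\in D}\|\gamma(k,l)-u^*\|_{\mathbb{X}_0^{s,p}(\Omega)}\leq\delta_0$, and then $\varepsilon<\tfrac12(m_s-\beta)$. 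Put $\tilde\gamma:=\eta\circ\gamma$. On $\partial D$ one has $\Phi(\gamma(k,l))\leq\beta<m_s-2\varepsilon$, hence $\tilde\gamma=\gamma$ there by $(i)$; in the interior $\Phi(\gamma(k,l))\leq m_s\leq m_s+\varepsilon$ and $\|\gamma(k,l)-u^*\|_{\mathbb{X}_0^{s,p}(\Omega)}\leq\delta_0$, so $(iii)$ gives $\Phi(\tilde\gamma(k,l))\leq m_s-\varepsilon$. Altogether $\sup_{(k,l)\in D}\Phi(\tilde\gamma(k,l))<m_s$.

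Finally I would force $\tilde\gamma(D)$ to meet $\mathcal{M}$, contradicting the energy bound. Define $\Upsilon:D\to\mathbb{R}^2$ by $\Upsilon(k,l)=\big(\langle\Phi'(\tilde\gamma(k,l)),(\tilde\gamma(k,l))^+\rangle,\ \langle\Phi'(\tilde\gamma(k,l)),(\tilde\gamma(k,l))^-\rangle\big)$. On $\partial D$ we have $\tilde\gamma(k,l)=k(u^*)^++l(u^*)^-$, so $(\tilde\gamma(k,l))^+=k(u^*)^+$ and $(\tilde\gamma(k,l))^-=l(u^*)^-$, and $\Upsilon|_{\partial D}$ coincides with $(g_1(k,l),g_2(k,l))$ from \eqref{eq5.15}--\eqref{eq5.16}; choosing $\rho$ compatibly with Lemma \ref{lmn5.4}, the sign relations \eqref{eq5.20}--\eqref{eq5.21} are precisely the Poincar\'e--Miranda hypotheses, so $\deg(\Upsilon,D,0)\neq 0$ and there is $(k_0,l_0)\in D$ with $\Upsilon(k_0,l_0)=0$. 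Moreover, by Lemma \ref{lmn5.1} the parts $(u^*)^\pm$ are uniformly away from $0$, so taking $\rho,\varepsilon$ small and using continuity of $w\mapsto w^\pm$ ensures $(\tilde\gamma(k_0,l_0))^\pm\neq 0$; hence $w_0:=\tilde\gamma(k_0,l_0)\in\mathcal{M}$, whence $\Phi(w_0)\geq m_s$, contradicting $\Phi(w_0)<m_s$. Therefore $\Phi'(u^*)=0$.

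The main obstacle I anticipate is the bookkeeping that makes the deformation simultaneously localised in energy (near $m_s$) and in space (near $u^*$), so that $\tilde\gamma=\gamma$ on $\partial D$ and the degree of $\Upsilon$ reduces exactly to the Miranda data of Lemma \ref{lmn5.4}; a secondary subtlety is that after deformation the surface $\tilde\gamma$ no longer splits as $a u^++b u^-$, so one must argue separately that its positive and negative parts stay nonzero in order to land in $\mathcal{M}$. The $p<2$ case additionally requires replacing the gradient flow by the pseudo-gradient construction of Lemma \ref{lmn4.7} to retain the descent estimate.
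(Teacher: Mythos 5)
Your overall strategy coincides with the paper's: argue by contradiction, localise a quantitative deformation near $u^*$ both in energy (near $m_s$) and in norm, deform the two--parameter surface $\gamma(k,l)=k(u^*)^++l(u^*)^-$ so that its energy drops strictly below $m_s$ while it is left untouched on $\partial D$, and then use a Brouwer degree argument on the square to force the deformed surface to meet $\mathcal{M}$. The one step where you diverge is how the degree is shown to be nonzero, and as written that step has a gap: the sign relations \eqref{eq5.20}--\eqref{eq5.21} were established in Lemma \ref{lmn5.4} on a square $[r,R]^2$ with $r$ small and $R$ large (coming from \eqref{eq5.18}--\eqref{eq5.19}), not on your small square $D=[1-\rho,1+\rho]^2$ around $(1,1)$, so they are not ``precisely the Poincar\'e--Miranda hypotheses'' for $\partial D$. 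You would either have to re-derive the sign conditions on $\partial D$ --- which is possible because $u^*\in\mathcal{M}$: by $(f_5)$ one gets $g_1(k,k)>0$ for $k<1$ and $g_1(k,k)<0$ for $k>1$ (and symmetrically for $g_2$), and combining this with the monotonicity of $g_1(k,\cdot)$ in $l$ and of $g_2(\cdot,l)$ in $k$ noted in Lemma \ref{lmn5.4} yields Miranda's hypotheses on the small square --- or do what the paper does, namely compute $\det(J_1'(1,1))>0$ from the second-derivative quantities $\alpha_i,\beta_i,\gamma_i$ and the inequality $\mathcal{H}'(x,s)s>0$ coming from $(f_5)$; this is exactly where the hypothesis $f\in C^1$ is used, and your Miranda route would in fact avoid differentiating $f$ at this stage.

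A second, smaller point: to conclude $(\tilde\gamma(k_0,l_0))^\pm\neq0$ you appeal only to continuity of $w\mapsto w^\pm$, but continuity of $\eta$ does not by itself keep the deformed point close to $u^*$; you need the displacement bound built into the quantitative deformation lemma, i.e.\ a choice of $\varepsilon$ small relative to $\lambda$ and $\delta_0$ (the paper enforces this through $\epsilon_1\leq\rho_1\mu_1/8$, which guarantees $\sigma(k_0,l_0)\in B_{3\mu_1}(u^*)$, a ball on which $w^\pm\neq0$ by the initial choice of $\mu_1$). With these two repairs your argument closes and is essentially equivalent to the paper's proof, the only substantive trade-off being Jacobian-plus-degree (paper) versus Miranda sign conditions (yours) for the nonvanishing of the degree.
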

\begin{proof}
    We prove this by the method of contradiction. Suppose $\Phi'(u^*)\neq0$. Then there exists $\rho_1,\mu_1>0$ such that
    $$\|\Phi'(u^*)\|_{\mathbb{X}_0^{s,p}(\Omega)}\geq\rho_1,~\forall ~u\in B_{3\mu_1}(u^*),$$
    where $B_{3\mu_1}(u^*)=\{u\in{\mathbb{X}_0^{s,p}(\Omega)}:\|u-u^*\|_{\mathbb{X}_0^{s,p}(\Omega)}\leq3\mu_1\}$. As $u^*\in \mathcal{M}$, we have $\langle \Phi'(u),u^\pm\rangle=0$ and $u^\pm\neq0$. Choose sufficiently small $\mu_1>0$ such that $u^\pm\neq0$ for all $u\in B_{3\mu_1}(u^*)$. Let $D=(1-\delta_1,1+\delta_1)\cross (1-\delta_1,1+\delta_1)$ with $\delta_1\in(0,\frac{1}{2})$ such that $k(u^*)^++l(u^*)^-\in B_{3\mu_1}(u^*)$, $\forall\,(k,l)\in \Bar{D}$. From Lemma \ref{lmn5.6}, we obtain 
    $$\bar{m}_s:=\max_{(k,l)\in \partial D}\Phi(k(u^*)^++l(u^*)^-)<m_s.$$
    Choose $\epsilon_1=\min\{\frac{m_s-\bar{m}_s}{2},\frac{\rho_1\mu_1}{8}\}$. Therefore, by similar arguments as in Lemma \ref{lmn4.9} (see also \cite[Lemma 2.3]{W1996}), it follows that there exists a continuous mapping $\eta:\mathbb{R}\cross {\mathbb{X}_0^{s,p}(\Omega)} \rightarrow {\mathbb{X}_0^{s,p}(\Omega)}$ such that \\
    $(a)$ $\eta(1,u)=u$ if $u\notin \Phi^{-1}[m_s-2\epsilon_1,m_s+2\epsilon_1]\cap B_{2\mu_1}(u^*)$,\\
    $(b)$ $\eta(1,\Phi^{m_s+\epsilon_1}\cap B_{\mu_1}(u^*))\subset \Phi^{m_s-\epsilon_1}$,\\
    $(c)$ $\Phi(\eta(1,u))\leq \Phi(u)$, $\forall\,u \in {\mathbb{X}_0^{s,p}(\Omega)}$.\\
    Define, $\sigma(k,l)=\eta(1,k(u^*)^++l(u^*)^-)$, $\forall\,(k,l)\in \bar{D}$. Thus, from Lemma \ref{lmn5.6} along with $(b)-(c)$ of the deformation lemma, we derive
    \begin{align}\label{eq5.46}
        \max_{(k,l)\in \bar{D}}\Phi(\sigma(k,l))=\max_{(k,l)\in \bar{D}}\Phi(\eta(k(u^*)^++l(u^*)^-))<m_s.
    \end{align}
    From \eqref{eq5.46}, we have $\{\sigma(k,l)\}_{(k,l)\in \bar{D}}\cap \mathcal{M}=\emptyset$. We will establish a contradiction by proving that $\{\sigma(k,l)\}_{(k,l)\in \bar{D}}\cap \mathcal{M}\neq\emptyset$. Now for any $(k,l)\in \bar{D}$, we define 
    \begin{align}\label{eq5.47}
        J_1(k,l)&=\left(\langle\Phi'(k(u^*)^++l(u^*)^-),(u^*)^+\rangle,\langle\Phi'(k(u^*)^++l(u^*)^-),(u^*)^-\rangle\right)~\text{and}\\ \label{eq5.48}
        J_2(k,l)&=\left(\frac{1}{k}\langle\Phi'(\sigma(k,l)),\sigma^+(k,l)\rangle, \frac{1}{l}\langle\Phi'(\sigma(k,l)),\sigma^-(k,l)\rangle\right).
    \end{align}
   Clearly, $J_1$ is $C^1$, since $f\in C^1(\bar{\Omega}\cross \mathbb{R},\mathbb{R})$. Therefore, $\langle \Phi'(u^*),(u^*)^\pm\rangle=0$, which implies that
    \begin{align}\label{eq5.49}
        &\int_\Omega |\nabla u^*|^{p-2} \nabla u^*\cdot\nabla (u^*)^+ \d x \nonumber\\
        &+ {\int_{\mathbb{R}^N} \int_{\mathbb{R}^N}} \frac{\splitfrac{|u^*(x)-u^*(y)|^{p-2}(u^*(x)-u^*(y))}{\times((u^*)^+(x)-(u^*)^+(y))}}{|x-y|^{N+sp}} \d x \d y = \int_\Omega f(x,(u^*)^+)(u^*)^+ \d x,\\ \label{eq5.50}
    & \int_\Omega |\nabla u^*|^{p-2} \nabla u^*\cdot\nabla (u^*)^- \d x \nonumber\\
    &+ {\int_{\mathbb{R}^N} \int_{\mathbb{R}^N}} \frac{\splitfrac{|u^*(x)-u^*(y)|^{p-2}(u^*(x)-u^*(y))}{\times((u^*)^-(x)-(u^*)^-(y))}}{|x-y|^{N+sp}} \d x \d y = \int_\Omega f(x,(u^*)^-)(u^*)^- \d x.
    \end{align}
   On using $(f_5)$, we have
    \begin{align}\label{eq5.51}
    \mathcal{H}'(x,s)s=f'(x,s)s^2-(p-1)f(x,s)s>0,~ \forall\,s\in \mathbb{R}\setminus \{0\}.
    \end{align}
    Let us denote,
    \begin{align}
        \alpha_1=&\int_\Omega |\nabla u^*|^{p-2} |\nabla (u^*)^+|^2 \d x \nonumber\\
        &+ {\int_{\mathbb{R}^N} \int_{\mathbb{R}^N}} \frac{|u^*(x)-u^*(y)|^{p-2}|((u^*)^+(x)-(u^*)^+(y))|^2}{|x-y|^{N+sp}} \d x \d y,\nonumber\\
        \alpha_2=& \int_\Omega f'_u(x,(u^*)^+)|(u^*)^+|^2 \d x,\nonumber\\
        \alpha_3=& \int_\Omega f(x,(u^*)^+)(u^*)^+ \d x,\nonumber\\
        \beta_1=&\int_\Omega |\nabla u^*|^{p-2} |\nabla (u^*)^-|^2 \d x \nonumber \\
        &+{\int_{\mathbb{R}^N} \int_{\mathbb{R}^N}} \frac{|u^*(x)-u^*(y)|^{p-2}|((u^*)^-(x)-(u^*)^-(y))|^2}{|x-y|^{N+sp}} \d x \d y,\nonumber\\
        \beta_2=&\int_\Omega f'_u(x,(u^*)^-)|(u^*)^-|^2 \d x,\nonumber\\
        \beta_3=&\int_\Omega f(x,(u^*)^-)(u^*)^- \d x,\nonumber\\
        \gamma_1=&\int_\Omega |\nabla u^*|^{p-2} \nabla (u^*)^-\cdot\nabla (u^*)^+ \d x \nonumber\\
        &+{\int_{\mathbb{R}^N} \int_{\mathbb{R}^N}} \frac{|u^*(x)-u^*(y)|^{p-2}((u^*)^-(x)-(u^*)^-(y))((u^*)^+(x)-(u^*)^+(y))}{|x-y|^{N+sp}} \d x \d y, \nonumber\\
        \gamma_2=&\int_\Omega |\nabla u^*|^{p-2}  \nabla (u^*)^+\cdot\nabla(u^*)^-\d x \nonumber\\
        &+{\int_{\mathbb{R}^N} \int_{\mathbb{R}^N}} \frac{|u^*(x)-u^*(y)|^{p-2}((u^*)^+(x)-(u^*)^+(y))((u^*)^-(x)-(u^*)^-(y))}{|x-y|^{N+sp}} \d x \d y. \nonumber
    \end{align}
    On using the inequalities \eqref{eq5.49}, \eqref{eq5.50} and \eqref{eq5.51}, we obtain
    $$\alpha_1>0,~\alpha_2>(p-1)\alpha_3>0,$$
     $$\beta_1>0,~\beta_2>(p-1)\beta_3>0,$$
     \begin{align}
         \gamma_1=&\int_\Omega |\nabla u^*|^{p-2} \nabla (u^*)^-\cdot\nabla (u^*)^+ \d x \nonumber\\
        &+{\int_{\mathbb{R}^N} \int_{\mathbb{R}^N}} \frac{|u^*(x)-u^*(y)|^{p-2}(-(u^*)^-(x)(u^*)^+(y)-(u^*)^-(y)(u^*)^+(x))}{|x-y|^{N+sp}} \d x \d y \nonumber \\
        =& {\int_{\mathbb{R}^N} \int_{\mathbb{R}^N}} \frac{|u^*(x)-u^*(y)|^{p-2}(-(u^*)^-(x)(u^*)^+(y)-(u^*)^-(y)(u^*)^+(x))}{|x-y|^{N+sp}} \d x \d y=\gamma_2>0,\nonumber
     \end{align}
     $$\alpha_1+\gamma_1=\alpha_3,~\beta_1+\gamma_2=\beta_3.$$
     Thus, we get
    { \begin{align}
          \det(J'_1(1,1))=&\langle\Phi''(u^*)(u^*)^+,(u^*)^+\rangle.\langle\Phi''((u^*)(u^*)^-,(u^*)^-\rangle \nonumber\\
         &- \langle\Phi''(u^*)(u^*)^+,(u^*)^-\rangle.\langle\Phi''((u^*)(u^*)^-,(u^*)^+\rangle \nonumber\\
         =&[(p-1)\alpha_1-\alpha_2)].[(p-1)\beta_1-\beta_2)]-(p-1)^2\gamma_1.\gamma_2 \nonumber\\
         >&(p-1)^2\gamma_1.\gamma_2-(p-1)^2\gamma_1.\gamma_2=0. \nonumber
     \end{align} }
     Hence, by the Brouwer degree theory, we obtain $\deg(J_1,D,0)=1$. In addition, from \eqref{eq5.46}, we have $\sigma(k,l)=k(u^*)^++l(u^*)^-),~\forall\,(k,l)\in \partial D$.\\
     Therefore, $$\deg(J_2,D,0)=\deg(J_1,D,0)=1.$$
     Thus, there exists $(k_0,l_0)\in D$ such that $J_2(k_0,l_0)=0$. By using the conditions $(a)$ and $(b)$ of $\eta$, we derive that
     $$u_0=\sigma(k_0,l_0)=\eta(1,k_0(u^*)^++s_0(u^*)^-)\in B_{3\mu_1}(u^*).$$
     Therefore, we get $\langle \Phi'(u_0),u_0^+\rangle=0=\langle \Phi'(u_0),u_0^-\rangle$ with $u^\pm\neq0$, i.e $u_0\in \{\sigma(k,l)\}_{(k,l)\in \bar{D}}\cap \mathcal{M}$. Thus, we arrive at a contradiction. Hence $u^*$ is a critical point of $\Phi$ and is a least energy sign-changing solution to the problem \eqref{MP}. This completes the proof.
\end{proof}
\begin{lemma}\label{lmn5.8}
    For any $u\in \mathcal{M}$, there exist $\bar{k}_u$, $\bar{l}_u$, $\in (0,1]$ such that $\bar{k}_u u^+$ and $\bar{l}_uu^-\in\mathcal{N}$.
\end{lemma}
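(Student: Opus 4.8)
The plan is to run the one–dimensional Nehari (fibering) analysis of Lemma \ref{lmn5.2} separately on the two nonzero functions $u^+$ and $u^-$, and then to pin down the location of the resulting projection parameters by comparing the two Nehari-type identities with the strict inequalities $\langle\Phi'(u^{\pm}),u^{\pm}\rangle<0$ that were already isolated inside the proof of Lemma \ref{lmn5.1}.

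First I would fix $u\in\mathcal{M}$, so that $u^{\pm}\neq0$ and $\langle\Phi'(u),u^{\pm}\rangle=0$. Since $u^{\pm}\in\mathbb{X}_0^{s,p}(\Omega)\setminus\{0\}$, applying Lemma \ref{lmn5.2} with $v=u^+$ and then with $v=u^-$ produces unique numbers $\bar{k}_u>0$ and $\bar{l}_u>0$ such that $\bar{k}_u u^+\in\mathcal{N}$ and $\bar{l}_u u^-\in\mathcal{N}$. It then remains only to show $\bar{k}_u\le1$ and $\bar{l}_u\le1$.

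For the bound on $\bar{k}_u$: the membership $\bar{k}_u u^+\in\mathcal{N}$ means $\bar{k}_u^{\,p}\|u^+\|_{\mathbb{X}_0^{s,p}(\Omega)}^p=\int_\Omega f(x,\bar{k}_u u^+)\bar{k}_u u^+\,\d x$, which after dividing by $\bar{k}_u^{\,p}$ reads
\[
\|u^+\|_{\mathbb{X}_0^{s,p}(\Omega)}^p=\int_\Omega\frac{f(x,\bar{k}_u u^+)}{|\bar{k}_u u^+|^{p-2}\bar{k}_u u^+}\,|u^+|^p\,\d x ,
\]
the integrand being read as $0$ where $u^+=0$. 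On the other hand, exactly as in the proof of Lemma \ref{lmn5.1}, the relation $\langle\Phi'(u),u^+\rangle=0$ together with the strict positivity of the nonlocal coupling term $E_1^+(u)$ forces $\langle\Phi'(u^+),u^+\rangle<0$, i.e.
\[
\|u^+\|_{\mathbb{X}_0^{s,p}(\Omega)}^p<\int_\Omega\frac{f(x,u^+)}{|u^+|^{p-2}u^+}\,|u^+|^p\,\d x .
\]
Subtracting these two relations gives $\int_\Omega\Big[\tfrac{f(x,u^+)}{|u^+|^{p-2}u^+}-\tfrac{f(x,\bar{k}_u u^+)}{|\bar{k}_u u^+|^{p-2}\bar{k}_u u^+}\Big]|u^+|^p\,\d x>0$. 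If $\bar{k}_u>1$, then on $\{u^+>0\}$ one has $\bar{k}_u u^+>u^+>0$, so the strict monotonicity in $(f_5)$ of $t\mapsto f(x,t)/|t|^{p-2}t$ on $(0,\infty)$ makes the bracket $\le0$ a.e., whence the integral is $<0$ (strictly, since $u^+\not\equiv0$) --- a contradiction. Hence $\bar{k}_u\le1$. The bound $\bar{l}_u\le1$ follows by the mirror argument, using $\langle\Phi'(u^-),u^-\rangle<0$ (again recorded in the proof of Lemma \ref{lmn5.1}), the Nehari identity for $\bar{l}_u u^-$, and the strict \emph{decrease} of $t\mapsto f(x,t)/|t|^{p-2}t$ on $(-\infty,0)$ from $(f_5)$: if $\bar{l}_u>1$ then $\bar{l}_u u^-<u^-<0$ on $\{u^-<0\}$, and the same sign bookkeeping yields a contradiction. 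Thus $\bar{k}_u,\bar{l}_u\in(0,1]$, as claimed.

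The only genuinely delicate point here is the strict inequality $\langle\Phi'(u^{\pm}),u^{\pm}\rangle<0$ for $u\in\mathcal{M}$: this is precisely where the nonlinearity of $(-\Delta)^s_p$ enters, producing the positive coupling terms $E_1^{\pm}(u)$ that prevent the naive guess $\bar{k}_u=\bar{l}_u=1$ (which would hold in the purely local case) and leave us only with $\bar{k}_u,\bar{l}_u\le1$; beyond that, everything is a direct comparison via $(f_5)$ and Lemma \ref{lmn5.2}.
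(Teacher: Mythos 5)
Your proposal is correct and follows essentially the same route as the paper: apply Lemma \ref{lmn5.2} to $u^{\pm}$ to get the Nehari projections, use the strict inequality $\langle\Phi'(u^{\pm}),u^{\pm}\rangle<0$ (coming from the positive coupling terms isolated in Lemma \ref{lmn5.1}), subtract the two relations, and conclude $\bar{k}_u,\bar{l}_u\le 1$ from the monotonicity in $(f_5)$. The only difference is that you spell out the contradiction for $\bar{k}_u>1$ and the sign bookkeeping for $u^-$, which the paper leaves implicit.
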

\begin{proof}
    We only show that there exists $\bar{k}_u\in (0,1]$ such that $\bar{k}_u u^+\in\mathcal{N}$. The proof of $\bar{l}_u u^-\in\mathcal{N}$ follows from analogous arguments. Since $u\in \mathcal{M}$, we have $\langle \Phi'(u),u^{+} \rangle=0$, that is 
    \begin{align}\label{eq5.52}
        \|u^+\|_{\mathbb{X}_0^{s,p}(\Omega)}^p<\int_\Omega f(x,u^+)u^+ \d x=A^+(u).
    \end{align}
    Again, from Lemma \ref{lmn5.2}, there exists $\bar{k}_u>0$ such that $\bar{k}_uu^+\in \mathcal{N}$, and
    \begin{align}\nonumber
      \langle \Phi'(\bar{k}_uu^+),\bar{k}_uu^+  \rangle=0,
    \end{align}
    which implies
    \begin{align}\label{eq5.53}
        \bar{k}_u^p\|u^+\|_{\mathbb{X}_0^{s,p}(\Omega)}^p=\int_\Omega f(x,\bar{k}_uu^+)\bar{k}_uu^+ \d x.
    \end{align}
    Thus, using \eqref{eq5.52} and \eqref{eq5.53}, we obtain
    \begin{align}\label{eq5.54}
         0< \int_\Omega \left[ \frac{f(x,u^+)}{|u^+|^{p-2}u^+}-\frac{f(x,\bar{k}_uu^+)}{|\bar{k}_uu^+|^{p-2}\bar{k}_uu^+}\right]|u^+|^p \d x.
    \end{align}
    In addition, using $(f_5)$ and \eqref{eq5.54}, we derive $\bar{k}_u\leq 1$. This completes the proof. 
\end{proof}
We will now provide the second important theorem.

\section*{{\bf\emph{Proof of the Theorem \ref{T2.4}}}}
\begin{proof}
The existence of sign-changing solution is an immediate consequence of Lemma \ref{lmn5.3} and Lemma \ref{lmn5.5}. Again, Lemma \ref{lmn5.5} and Lemma \ref{lmn5.7} confirm that the functional $\Phi$ admits a critical point $u^*\in \mathcal{M}$ and it is a least energy sign-changing solution of \eqref{MP}. We know that $\mathcal{H}(.,t)$ increases in $(0,+\infty)$ and decreases in $(-\infty,0)$. Therefore, from Lemma \ref{lmn5.8}, we deduce 
\begin{align}
    m_s=&\Phi(u^*)=\Phi(u^*)-\frac{1}{p}\langle\Phi'(u^*),u^*\rangle\nonumber\\
    =&\frac{1}{p}\int_\Omega \mathcal{H}(x,u^*) \d x \nonumber\\
    =&\frac{1}{p} \bigg[ \int_{\Omega^+} \mathcal{H}(x,(u^*)^+) \d x+\int_{\Omega^-} \mathcal{H}(x,(u^*)^-) \d x\bigg]\nonumber\\
    >&\frac{1}{p} \bigg[ \int_{\Omega^+} \mathcal{H}(x,\bar{k}_{u^*}(u^*)^+) \d x+\int_{\Omega^-} \mathcal{H}(x,\bar{l}_{u^*}(u^*)^-) \d x\bigg]\nonumber\\
    =&\bigg[\Phi(\bar{k}_{u^*}(u^*)^+)-\frac{1}{p}\langle\Phi'(\bar{k}_{u^*}(u^*)^+),\bar{k}_{u^*}(u^*)^+\rangle\bigg]\nonumber \\
    &+\bigg[\Phi(\bar{l}_{u^*}(u^*)^-)-\frac{1}{p}\langle\Phi'(\bar{l}_{u^*}(u^*)^-),\bar{l}_{u^*}(u^*)^-\rangle\bigg]\nonumber\\
    =& \Phi(\bar{k}_{u^*}(u^*)^+)+\Phi(\bar{l}_{u^*}(u^*)^-)\geq 2c_s, \nonumber
\end{align}
that is, the energy level for the least energy of sign-changing solutions is strictly greater than twice that of the ground-state energy. This completes the proof.
\end{proof}

\begin{remark} We mention here that the results of Theorem \ref{T2.3} and Theorem \ref{T2.4} remain valid even if we consider a nonlocal operator with a generalized kernel, given by
$$\mathcal{L}_Ku=C({N,s,p})\text{P.V}\int_{\mathbb R^N} (|u(x)-u(y)|^{p-2})(u(x)-u(y)) K(x,y) \d y,$$
     where the above integral is defined as the principal value, $C({N,s,p})$ is a normalizing constant and $K(x,y)$ is a symmetric kernel such that
     \begin{equation}\label{gen k}
         \frac{\lambda^{-1}}{|x-y|^{N+ps}}\leq K(x,y) \leq \frac{\lambda}{|x-y|^{N+ps}},
     \end{equation}
    for some constant $\lambda\geq1.$ Note that, using the estimate \eqref{gen k}, one can obtain the lemmas above to prove Theorem \ref{T2.3} and Theorem \ref{T2.4}.
\end{remark}

\section*{Conflict of interest statement}
\noindent On behalf of the authors, the corresponding author states that there is no conflict of interest.
 \section*{Data availability statement}
\noindent Data sharing does not apply to this article as no dataset were generated or analysed during the current study.

\section*{Acknowledgement}
\noindent Souvik Bhowmick would like to thank the Council of Scientific and Industrial Research (CSIR), Govt. of India for the financial assistance to carry out this research work [grant no. 09/0874(17164)/2023-EMR-I]. SG acknowledges the research facilities available at the Department of Mathematics, NIT Calicut under the DST-FIST support, Govt. of India [project no. SR/FST/MS-1/2019/40 Dated. 07.01.2020].


\end{document}